\documentclass[11pt,a4paper, final]{article}
\usepackage{template}
\usepackage{calc}
\usepackage{pgfplots}
\usepackage{fullpage}

\usepackage[normalem]{ulem}

\newcommand{\Addr}{A}
\DeclareMathOperator\cut{Cut}

\DeclareMathOperator\E{E}
\let\P\relax
\DeclareMathOperator\P{P}

\newcommand{\notleftright}{\mathrel{\ooalign{$\leftrightarrow$\cr\hidewidth$/$\hidewidth}}}

\newcommand{\PP}{\mathbf{P}}

\newcommand{\Z}{\mathbb{Z}}

\newcommand{\R}{\mathbb{R}}
\newcommand{\N}{\mathbb{N}}

\colorlet{roxo}{purple!20!white}
\colorlet{verde}{green!20!white}
\colorlet{azul}{blue!20!white}

\makeatletter
\newcommand\xleftrightarrow[2][]{%
  \ext@arrow 9999{\longleftrightarrowfill@}{#1}{#2}}
\newcommand\longleftrightarrowfill@{%
  \arrowfill@\leftarrow\relbar\rightarrow}
\makeatother

\newcounter{iconst}

\newcommand{\Edge}{\mathcal{E}}

\usetikzlibrary{patterns}

\begin{document}

\title{Percolation on hierarchical lattices}

\date{\today}
\author{
  Caio Alves
  \thanks{Email: \ \texttt{caio\_teodoro.de\_magalhaes\_alves@technikum-wien.at}; \ Department of Computer Science \& Applied Mathematics, FH Technikum Wien, H\"{o}chst\"{a}dtplatz 6, 1200 Vienna - Austria.
}
  \and
  Rangel Baldasso
  \thanks{Email: \ \texttt{rangel@puc-rio.br}; \ Department of Mathematics, PUC-Rio, Rua Marqu\^es de S\~ao Vicente 225, G\'avea, 22451-900 Rio de Janeiro, RJ - Brazil.}
  \and
  Carlos Gustavo Moreira
  \thanks{Email: \ \texttt{gugu@impa.br}; \ IMPA, Estrada Dona Castorina 110, 22460-320 Rio de Janeiro, RJ - Brazil.}
  \and
  Augusto Teixeira
  \thanks{Email: \ \texttt{augusto@impa.br}; \ IMPA, Estrada Dona Castorina 110, 22460-320 Rio de Janeiro, RJ - Brazil.}
}

\maketitle

\begin{abstract}
  We consider independent Bernoulli percolation on top of sequences of hierarchical graphs.
  Given a graph $G_{1}$ with two distinguished vertices $a_{1}$ and $b_{1}$, the hierarchical graph with seed $G_{1}$ is the sequence $\big( G_{k} \big)_{k \geq 1}$ resulting from the inductive procedure, where the graph $G_{k+1}$ is obtained from $G_{k}$ by replacing each of its edges with a copy of $G_{1}$, attached by the vertices $a_{1}$ and $b_{1}$.
  We prove that, under sharp hypotheses, percolation on these graphs presents a unique phase transition. Second, we establish the existence of several critical exponents in this context, such as the critical exponents for the correlation length $\nu$, the surface tension $\mu$, the one-arm exponent $\alpha_{1}$.
  Several results are also obtained for their infinite counterpart $G_\infty$, which is the Benjamini-Schramm limit of $G_k$: uniqueness of the infinite cluster, continuity of $\theta(p)$, existence of the percolation-probability exponent $\beta$ and scaling relations for the critical exponents $\alpha_1$, $\nu$ and $\beta$.
  Furthermore, we analyze noise sensitivity for crossing functions in $G_{k}$ and establish sharp noise sensitivity in this setting.
  Finally, we propose a setup where it is possible to verify the locality hypothesis, stating that the critical threshold for percolation is a local property, while critical exponents are determined by the global geometry of the graph.
  As a consequence of the techniques developed here, we also provide a necessary and sufficient condition for the existence of a unique fixed point for the map $p \mapsto \mathbb{E}_p[g]$ in $(0,1)$, where $g:\{0,1\}^n \to \{0,1\}$ is a nontrivial monotone Boolean function.
  \medskip

  \noindent
  \emph{Keywords and phrases.} Hierarchical graphs, percolation, critical exponents, scaling relations.

  \noindent
  MSC 2010: 60G18, 60K35, 82B43, 37D40.
\end{abstract}

\section{Introduction}

  Percolation on Euclidean lattices has been intensively studied in the last decades.
  However, some of the most exciting questions on the subject, such as: the continuity of the phase transition, universality and scaling relations have only been answered for some specific examples, such as the triangular lattice ~\cite{smirnov2001critical, lsw_2002} or higher dimensions~\cite{hara1990mean}.

  Given the big challenge involved in such study, simpler networks have been analyzed as a testbed for the study of the Euclidean case.
  Most notably, percolation on regular trees is an example of success in this type of simplification, since it allows for explicit calculations to be performed, while keeping some of the phenomenology of $\mathbb{Z}^d$~\cite{lyons1990random} and Section 10.1 from~\cite{Gri99}.
  However, several interesting phenomena that can be observed for percolation on $\mathbb{Z}^d$ are not present on trees, such as: noise sensitivity of crossing probabilities~\cite{bks}, non mean-field behavior~\cite{kesten1987scaling, kesten1987scaling2}, and more.

  In this paper, we deepen the study of percolation on recursively defined hierarchical lattices.
  These have been introduced as yet another model that can be analyzed in depth with rigorous mathematics~\cite{levitan, hk2009}.
  However, unlike the example of trees, these models capture with much more fidelity the phenomena that are present on Euclidean lattices.
  In particular, all the examples of shortcomings that were mentioned in the above paragraph can be observed for percolation on hierarchical lattices, as we prove below.

  Another special advantage of hierarchical lattices, is that their study can be understood as a toy model for the Renormalization Group, which in the physics, non-rigorous literature can provide deep insights and conjectures for percolation and other models of statistical mechanics on $\mathbb{Z}^d$.

  The main contributions of this article stem from the depth and precision of the results that we present for percolation on hierarchical lattices.
  In particular, as we state in the subsection below, we prove: uniqueness of the phase transition, existence of a variety of critical exponents, scaling relations, noise sensitivity, locality of critical thresholds, globality of critical exponents and near critical scaling.

\subsection{Notation and main results}

We start by introducing the main object of our study, which consists of a sequence of finite graphs.
For this, let $G_1 = (V_1, \Edge_1)$ be a connected finite graph with two distinguished vertices $a_{1}$ and $b_{1}$. Our central hypothesis throughout the text will be that
\begin{equation}
  \tag{H}
  \label{e:hypothesis}
  \begin{array}{c}
    \arraypar{$G_{1}$ is such that $\d_{G_{1}}(a_1, b_1) > 1$ in $G_1$ and there exist at least two edge-disjoint paths connecting $a_{1}$ to $b_{1}$ in $G_1$.}
  \end{array}
\end{equation}
As explained in Remark~\ref{rmk:cut_set}, the above hypothesis is necessary for the non-triviality of the percolation process that we will introduce.

We now define our sequence of graphs $G_{k}$ by an inductive procedure.
Noting that $G_1$ has already been defined, suppose that we have constructed $G_{k} = (V_{k}, \Edge_{k})$ for some $k \geq 1$.
Then, the graph $G_{k+1}$ is obtained by replacing each edge $e=\{x,y\} \in \Edge_{k}$ by a copy of $G_{1}$, where the distinguished vertices $a_{1}$ and $b_{1}$ are identified with $x$ and $y$, respectively.
See Figure~\ref{f:d_k} for an illustration of the first few graphs in this sequence, for a classic choice of $G_1$.

\begin{remark}
  Note that there may be a choice on how to associate $\{x, y\}$ to $\{a_1, b_{1}\}$ so that the above definition is not always precise.
  In \cref{sec:notation},  we make an oriented construction of $(G_k)_{k \geq 0}$, which will remove this ambiguity.
  However, for the introduction, one may think of graphs $G_{1}$ for which there is an isomorphism that exchanges $a_1$ and $b_1$, see Figure~\ref{f:d_k} for one such example.
  This assumption removes any ambiguity, since both assignments of $\{x, y\}$ with $\{a_1, b_{1}\}$ are equivalent (up to isomorphisms).
\end{remark}

\begin{figure}[h]
  \begin{center}
    \includegraphics[width=0.7\textwidth]{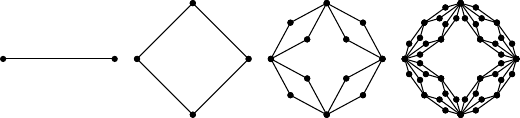}
  \end{center}
  \caption{First three interactions of the graph $G_{k}$ for the diamond hierarchical lattice.}
  \label{f:d_k}
\end{figure}

\medskip

The main goal of this paper is to analyze the behavior of Bernoulli percolation on the sequence of graphs $G_{k}$.
More precisely, given $p \in [0,1]$, denote by $\mathbb{P}^{k}_{p}$ the distribution of independent edge percolation on the graph $G_{k}$ with parameter $p$.
Under this law, every edge is declared open with probability $p$ or closed with probability $1 - p$, independently.

\paragraph{Uniqueness of phase transition.}
The first observable that we focus on is akin to the box-crossing probabilities, that play a central role in the analysis of planar percolation. Denote by $f_{k}:[0,1] \to [0,1]$ the probability that $a_{k}$ is connected to $b_{k}$ by an open path
\begin{equation}
f_k(p) = \mathbb{P}^{k}_p ( a_k \leftrightarrow b_k ).
\end{equation}
The functions above will be used to detect critical behavior for percolation on these graphs.
Theorem~\ref{t:crossing} proves that, under \cref{e:hypothesis},
\begin{display}
  $f_1$ admits a unique fixed point $p_{\star} = p_{\star}(G_{1})$ in the open interval $(0,1)$.
\end{display}
Note that for a given graph like $G_1$ in Figure~\ref{f:d_k}, the above statement can be easily proved by writing down $f_1$ and inspecting it.
However, under the generality of \cref{e:hypothesis}, this result is novel and non-trivial.

Furthermore, \cref{t:crossing} also shows that
\begin{equation}
  \label{e:zeta}
  \zeta = f_{1}'(p_{\star})>1
\end{equation}
and that the crossing probabilities actually determine this critical point, in the sense that
\begin{equation}
p_{\star} \text{ can be characterized by } p_{\star} = \sup \{p \in [0, 1]: \lim_k f_k(p) = 0 \}.
\end{equation}

\paragraph{Correlation length.}
Having proved the existence of a phase transition for percolation on such sequences of graphs, we focus on the three distinct phases of the process. We start with the off-critical phases by establishing the existence of the critical exponents for the correlation length and the surface tension. In fact, Theorem~\ref{th:off-critical-decay} establishes that the limit
\begin{equation}
  \xi(p) = \lim_k \frac{\log f_k(p)}{\d_{G_{1}}(a_1, b_1)^k}
\end{equation}
exists for any $p < p_\star$ and
\begin{equation}\label{eq:critial_correlation}
\uc{c:corr_exp_1} (p_\star - p)^{-\nu}
   \leq \xi(p)
   \leq \uc{c:corr_exp_2} (p_\star - p)^{-\nu},
   \text{ for $p \in (p_\star/2, p_\star)$}
\end{equation}
where $\nu = \log_\zeta (\d_{G_{1}}(a_1, b_1))$, effectively determining the correlation-length critical exponent. Analogous results are obtained for the surface tension in the supercritical phase.

\paragraph{Sharp noise sensitivity.}
In Section~\ref{s:sharp_ns}, we analyze noise sensitivity of the crossing functions in the graphs $G_{k}$. This measures how sensitive percolation crossings are with respect to small noises in the output of the edge-configuration. Denoting by $\omega_{p}$ the percolation coniguration at $G_{k}$ and by $\omega_{p}^{\varepsilon}$ an $\varepsilon$-perturbation of $\omega_{p}$ (see Equation~\eqref{eq:resampling}), we prove in Theorem~\ref{t:sharp_ns} that there exists a non-decreasing homeomorphism $g:[0, +\infty] \to [0,1]$ such that
\begin{equation}
\lim_{k \to \infty} \mathbb{E}_{k} \big[
    \textnormal{\textbf{1}}_{\{a_{k} \leftrightarrow b_{k}\}}(\omega_{p_{\star}})
    \textnormal{\textbf{1}}_{\{a_{k} \leftrightarrow b_{k}\}}(\omega_{p_{\star}}^{\varepsilon_{k}})\big]
    = p_{\star}^{2} g(c)+ p_{\star}(1-g(c)),
\end{equation}
whenever $\zeta^{k}\varepsilon_{k} \to c$. This not only establishes noise sensitivity if $c = +\infty$, but also determines the whole correlation structure observed as the noise level is varied. 

\paragraph{Benjamini-Schramm limit.}
In Section~\ref{sec:bensch_limit} we establish the existence of an infinite (edge-rooted) graph $(\bar{G}_{\infty}, E_{\infty})$ obtained as the Benjamini-Schramm limit of the graphs $G_k$.
In Section~\ref{sec:bensch_perc}, we analyze the percolative properties of this infinite graph, proving in Theorem~\ref{t:p_c_constant} that the critical value for bond percolation $p_{c}(\bar{G}_{\infty})$ is almost surely constant and that
\begin{display}
$p_{c}$ coincides with the box-crossing threshold $p_{\star}$.
\end{display}

In \cref{t:uniqueness} we prove uniqueness of the infinite cluster on $\bar{G}_{\infty}$ for $p > p_{c}$ and in \cref{t:continuity} we conclude that
\begin{display}
$p \mapsto \theta^{\bar{G}_{\infty}}(p) = \PP^{\bar{G}_\infty}_p ( E_\infty \leftrightarrow \infty )$ is almost surely continuous.
\end{display}

\paragraph{Critical cluster.}
In Section~\ref{s:arm_exponents} we start to examine more precisely the structure of percolation clusters in the hierarchical graphs.
It is important to notice that some of the results presented in this section have a similar version presented for the diamond lattice in \cite{hk2009}, but the precise distinctions will be discussed in \cref{s:arm_exponents} itself.
We first analyze the arm exponents and prove in Theorem~\ref{th:volume-of-critical-cluster} that, if $E_k$ denotes an uniformly selected edge in $G_k$, then
\begin{display}
  \label{e:one_arm}
  $\frac{\mathbb{P}_{p_\star}^{k} ( E_k \leftrightarrow \{a_k, b_k\} )}{\d_{G_{k}}(a_k, b_k)^{-\alpha_1}}$ is bounded from above and below by positive constants,
\end{display}
where $\alpha_1$ is the one-arm exponent and can be computed explicitly from the graph $G_{1}$.

After this is established, we examine the size of the connected components of the vertices $a_{k}$ and $b_{k}$ at criticality. That is, if $\mathcal{C}_{\{a_{k}, b_{k}\}}$ denotes the collection of edges in $G_{k}$ such that at least one endvertex is connected by an open path to the set $\{a_{k}, b_{k}\}$, we prove in Theorem~\ref{t:volume_fluctuations} that there exists $d_{f} >0$ such that (at $p_\star$)
\begin{display}
  \label{eq:critical_cluster}
  $\frac{|\mathcal{C}_{\{a_{k}, b_{k}\}}|}{d_{f}^{k}} \underset{k}{\Longrightarrow} \mu_{\infty}$, where $\mu_{\infty}$ is a probability measure supported on $(0, \infty)$.
\end{display}
The constant $d_{f}$ is called the dimension of the critical cluster and bounds for it can be found in Theorem~\ref{th:volume-of-critical-cluster}.

\paragraph{Scaling relations.}
In \cref{th:beta_and_relations} we prove that there exists $\uc{c:beta_and_relations} > 0$ such that
\begin{equation}
  \label{e:bound_theta_th_intro}
  \uc{c:beta_and_relations}^{-1} (p - p_c)^\beta
  \leq
  \textnormal{E} \big[ \theta^{\bar{G}_{\infty}} (p) \big]
  \leq
  \uc{c:beta_and_relations} (p - p_c)^\beta,
\end{equation}
where
\begin{equation}
  \label{eq:exponent_relation_intro}
  \beta = \log_\zeta \frac{|\Edge_{1}|}{d_f}
  = \alpha_1 \nu,
\end{equation}
see the definitions of $\zeta, \alpha_1$, $\nu$ and $d_f$ in \cref{e:zeta}, \cref{e:one_arm}, \cref{eq:critial_correlation}, and \cref{eq:critical_cluster} above.

\begin{remark}
  \label{r:exponent_relation_lattice}
  The relation in \cref{eq:exponent_relation_intro} is the exact analogue of a known scaling relation in Bernoulli percolation on the lattice, see for instance (7.25) in \cite{Nolin_near-critical}.
\end{remark}

\paragraph{Near-critical scaling.}
In \cref{th:near-critical-scaling} of \cref{s:near-critical} we describe in fine detail the near-critical behavior of the crossing probability.
More precisely, we prove that, for any $\lambda \in \mathbb{R}$,
\begin{equation}
  \label{eq:near-critical-scaling-intro}
  f^{(k)} \Big( p_\star + \frac{\lambda}{\zeta^k} \Big) \to h^{-1}(\lambda),
\end{equation}
where $h: (0, 1) \to \mathbb{R}$ is a diffeomorphism characterized in Section~\ref{s:near-critical}, where its tail asymptotics are also obtained in terms of critical exponents.

\paragraph{Locality of the critical threshold.}
Finally, Section~\ref{s:locality} deals with the famous conjecture, stating that the threshold $p_c$ should only depend on the local structure of the lattice, while the critical exponents should be determined by the global characteristics of the network (such as the dimension in the Euclidean case).

To state this result, consider the following construction of a hybrid hierarchical lattice:
\begin{enumerate}[\quad a)]
\item Fix two graphs with marked extremes $G_l$ and $G_g$ (standing for local and global respectively) and two integers $l_k$, $g_k$;
\item Start with $G_0$ given by a single edge between $a_0$ and $b_0$;
\item Obtain the graph $G_{k + 1}$ by replacing every edge of $G_k$ with a copy of $G_g$, until $k = g_k$;
\item Repeat the above procedure, now replacing every edge with a copy of $G_l$, for $l_k$ many steps;
\end{enumerate}
\cref{t:locality} roughly states that if $l_k \ll g_k$, then the resulting graph has
\begin{gather}
  \label{eq:locality_intro}
  \text{a threshold $p_k$ converging to the critical value of $G_l$}\\
  \text{a critical window exponent $\zeta_k$ converging to that of $G_g$}.
\end{gather}
See \cref{t:locality} for a more precise statement.

\paragraph{Fixed points of Boolean functions.}
Some of the tools developed in this paper can be applied in more general contexts. As a particularly nice consequence of our results, we obtain a necessary and sufficient condition for the existence of a fixed point $p_{\circ} \in (0,1)$ for the map $p \mapsto \mathbb{E}_p[g]$, where $g:\{0,1\}^n \to \{0,1\}$ is a nontrivial monotone Boolean function that is not the identity. In fact, we prove in Section~\ref{s:fixed_points} that such a point exists if, and only if
\begin{equation}
\frac{\d}{\d p} \mathbb{E}_{p}[g] \Big|_{p=0} = \frac{\d}{\d p} \mathbb{E}_{p}[g] \Big|_{p=1} = 0.
\end{equation}
Furthermore, in this case this fixed point is unique and the derivative of $\mathbb{E}_{p}[g]$ at $p_{\circ}$ is strictly larger than one.

\bigskip

Besides the classical tools from percolation theory, this article has employed a variety of techniques in order to obtain the above described results.
These range from the theory of Dynamical Systems
(see the proofs of \cref{t:crossing,th:beta_and_relations,th:near-critical-scaling,t:locality}),
theory of Boolean Functions (see \cref{t:crossing,t:sharp_ns}),
dynamic Margulis-Russo and recent techniques introduced by Tassion and Vanneuville (see \cref{t:sharp_ns}), theory of multi-type branching processes and results on infinite products of matrices (see the proof of \cref{th:beta_and_relations}).

\subsection{Historical remarks}
\label{ss:history}

Statistical mechanics models have been intensively studied on hierarchical lattices as a way to obtain precise results that would have been impossible to obtain on the Euclidean case.
This has been performed for the Ising model \cite{kaufman,bleher1988limit,Bleher,Khoo2014ACL,ANISIMOVA2021}, for other spin systems \cite{Griffithsb,kaufman1984spin,DeSimoi_2009}, polymers \cite{LACOIN2010467}, and many other systems.

The model of percolation in particular on these lattices has also been studied many times before, such as in \cite{levitan} and \cite{hk2009}, see Remarks~\ref{rmk:off_critical} and~\ref{rmk:one_arm}.

The main contribution of this article lies on the degree of generality and level of precision of the results obtained.
In fact, the results that we state hold for the largest possible class of seed graphs $G_1$, see \cref{rmk:cut_set}.
Moreover the theorems presented cover a vast range of phenomena and their level of precision goes beyond what is currently known on the Euclidean case, even for special cases that are notoriously well understood (such as the triangular lattice and the mean field regime).

Note that our definition of hierarchical lattices differ substantially from those introduced by Dyson for the study of models with long range interactions, see \cite{1969CMaPh..12...91D,brydges1992self,2025arXiv251212124B,bauerschmidt2019introduction,2021arXiv210317013H}.

\subsection{Organization of the paper}

We start by introducing the basic notation of the paper in \cref{sec:notation} and then move on to analyze crossing probabilities in \cref{sec:crossing}, where we establish sharpness and study the off-critical behavior of the system.
In \cref{s:sharp_ns}, we prove that the crossing functions are noise sensitive.
\cref{sec:bensch_limit,sec:bensch_perc} are dedicated to understanding the Benjamini-Schramm limit of these graphs and percolation on the infinite limit.
We then study the critical exponents and their relations in \cref{s:arm_exponents,s:exponent_relations,s:near-critical}.
Finally we prove the locality of the critical threshold and the global behavior of the critical exponents in \cref{s:locality}.
We end the paper with some open problems in \cref{s:open}, with some auxiliary results that were used throughout the text left to \cref{s:auxiliary}.

\subsection{Acknowledgements}

The authors would like to thank Vincent Tassion for the nice observations in Remark~\ref{rmk:tassion}. We thank Senya Shlosman, Yuval Peres, and Gan Chenyu for presenting us the alternative proof of Theorem~\ref{t:crossing} in Remark~\ref{rmk:simpler_proof}. We also thank Gábor Pete for proposing Open Problem~7.

CA was partially supported by the CNPq grant (447397/2024-9).
RB has counted on the support of CNPq grants ``Projeto Universal'' (402952/2023-5), (408529/2025-3), ``Produtividade em Pesquisa'' (308018/2022-2) and ``Jovem Cientista do Nosso Estado'' (204.276/2025) from FAPERJ.
CGM was supported by grants FAPERJ ``Cientista do Nosso Estado'' (200.251/2026) and CNPq ``Projeto Universal'' (408587/2023-7) and ``Produtividade em Pesquisa'' (307594/2022-0).
During this period, AT has been supported by grants ``Projeto Universal'' (408529/2025-3) and ``Produtividade em Pesquisa'' (304437/2018-2 and 304852/2025-2) from CNPq and ``Cientista do Nosso Estado'' (204.377/2024) from FAPERJ.

\section{Notation}
\label{sec:notation}

For a directed graph $H = (V_{H}, E_{H})$ and $x \in V_{H}$, we denote by $\deg^{\mathrm{in}}_{H}(x)$ and $\deg^{\mathrm{out}}_{H}(x)$ the in- and out-degrees of the vertex $x$, respectively.
We write $\deg_{H}(x) = \deg^{\mathrm{in}}_{H}(x) + \deg^{\mathrm{out}}_{H}(x)$ and define the degree of an (oriented) edge $e=(x,y) \in E_{H}$ as $\deg_{H}(e) = \deg_{H}(x)+\deg_{H}(y)$.
When $H$ is connected, we denote by $\d_{H}(\cdot, \cdot)$ the induced graph distance (forgetting orientation) and by $B_{H}(x,r)$ the ball of radius $r$ around $x \in V_{H}$.
For an edge $e=(x,y) \in E_{H}$, we write $B_{H}(e,r) = B_{H}(x,r) \cup B_{H}(y,r)$.
In $G_k$, these quantities will be denoted by $\deg^{\mathrm{in}}_{k}(x)$, $\deg^{\mathrm{out}}_{k}(x)$, $\deg_{k}(\cdot)$, $\d_k(\cdot, \cdot)$, and $B_{k}(\cdot, \cdot)$, respectively.

Given oriented edges $e, e' \in E_{H}$ and a vertex $v \in V_{H}$ in a connected graph $H$, we define the distances between edges, and distance between an edge and vertex as
\begin{equation}
  \begin{split}
  \label{e:dist_edges_def}
  d_{H}(e, e')
    &= \min
      \big\{
        \d_H(v, v'), \, v \text{ endvertex of } e \text{ and } v' \text{ endvertex of } e'
      \big\}, \\
  d_H(e, v)
    &= \min
      \big\{
        \d_H(v, w), \, w \text{ endvertex of } e
      \big\}
  \end{split}.
\end{equation}

Let us precisely introduce the hierarchical graphs we consider in this paper.
We will always convention that $G_0$ is a single edge $(a_0, b_0$, that is $G_{0} = (\{a_0, b_0\}, \{ (a_0, b_0) \})$.
Now, fix a finite, connected, oriented graph $G_1 = (V_1, \Edge_1)$ with two distinguished vertices $a_1, b_1 \in V$.
We define a sequence of oriented graphs $G_k$ in the following way.
Given $G_k = (V_k, \Edge_k)$ with distinguished vertices $a_k, b_k$, the graph $G_{k + 1} = (V_{k + 1}, \Edge_{k + 1})$ is defined as the result of the following procedure: replace each oriented edge $e = (x, y) \in \Edge_k$ by a copy of $G_1$, where the distinguished vertices $a_1$ and $b_1$ are placed on $x$ and $y$, respectively.
Define $a_{k + 1}$ and $b_{k + 1}$ in $G_{k + 1}$ as the vertices that were previously $a_k$ and $b_k$ in $G_k$ before the surgery.
See Figure~\ref{f:d_k} for an illustration of the process.

Given $k \geq 0$, there is a natural correspondence between the edges of $G_k$ and the words of length $k$ with alphabet $\Edge_{1}$.
These words will be called the \emph{address} of an edge.
More precisely, we will define a bijection $\Addr_k: \Edge_k \to \Edge_{1}^k$.
For $k = 0$, the only edge in $G_0$ is associated with the empty word $\varnothing \in \Edge^0$, so $\Addr_0(\{a_0, b_0\}) = \varnothing$.
Suppose that we have defined $A_k$ and recall the procedure to build $G_{k + 1}$ to conclude that every edge $e \in \Edge_{k + 1}$ is associated bi-univocally with a selected edge $e' \in \Edge_k$ and an edge $e'' \in \Edge_1$.
We therefore set $\Addr_{k + 1}(e) = \Addr_k(e') e'' \in \Edge_{1}^{k + 1}$, where we have used the notation of concatenation of a word with a character.

In the same spirit, for $0 \leq j \leq k$ and a word $w \in \Edge_{1}^j$, we define the sub-graph $G_k^w \subseteq G_k$ as the graph induced by the edges $e \in \Edge_k$ such that $w$ is a prefix of $A_k(e)$.
In particular, the following claims hold:
\begin{itemize}
\item $G_k^\varnothing = G_k$,
\item $G_k^w$ for $w \in \Edge_{1}^k$ naturally corresponds to the edges of $G_k$ and
\item for $0 \leq \ell \leq k$, the family $\{G_k^w\}_{w \in \Edge_{1}^{k - \ell}}$ forms an (edge disjoint) paving of the graph $G_k$ by graphs that are isomorphic to $G_\ell$, that is
  \begin{equation}
    \label{eq:decompose}
    G_k = \mcup_{w \in \Edge^{k - \ell}} G_k^w,
  \end{equation}
\end{itemize}

It is also natural to regard the set of words of various lengths $\cup_{k \geq 0} \Edge_{1}^k$ as a tree, where $\varnothing$ is the root and if $w' = we$, then $w'$ is a child of $w$.
In this case, each node of the tree has $|\Edge_{1}|$ children.

\bigskip

Our main goal is to examine the percolative properties of the sequence of graphs $G_k$.
Hence, define, for each $k \geq 1$, the probability space $\Omega_k = [0,1]^{\Edge_k}$ endowed with the Borel $\sigma$-algebra and the product measure $\mathbb{P}_k$ where each marginal is uniformly distributed.
For $p \in [0,1]$, let $\mathbb{P}^{k}_{p}$ denote the distribution of the percolation configuration
\begin{equation}\label{eq:percolation_k}
\omega_p(e) = \textbf{1}_{\omega(e) \leq p}, \text{ for all } \omega \in \Omega_k \text{ and } e \in \Edge_k.
\end{equation}
As usual, an edge $e \in \Edge_k$ is said to be open at level $p$ if $\omega_p(e)=1$ and closed otherwise.

\begin{remark}
  Notice that, although we require the graph $G_{1}$ to be oriented, the percolation model introduced above is non-oriented.
  In the case where there is no graph isomorphism of $G_{1}$ that maps $a_{1}$ into $b_{1}$, this orientation is necessary in order to uniquely define the replacement operation.
  We also do not require the orientation to be consistent, in the sense that it ``flows from $a_{1}$ to $b_{1}$.''
\end{remark}

\section{Crossing Probabilities}
\label{sec:crossing}

In analogy to box crossings for Bernoulli percolation on $\mathbb{Z}^2$, we are going to study the probability that $a_k$ is connected to $b_k$ in $G_k$.
More precisely, let
\begin{gather}
  \label{eq:C_k_f_k}
  C_k = [a_k \longleftrightarrow b_k] \quad \text{(in $G_k$)} \quad \text{and}\\
  f_k:[0, 1] \to [0, 1] \quad \text{given by} \quad f_k(p) = \mathbb{P}^{k}_p (C_k).
\end{gather}
Observe that the probability of any event in $G_k$ is a polynomial in $p$ (with degree at most $|\Edge_k|$), in particular so is $f_k(p)$.

\begin{figure}
  \begin{center}
    \begin{tikzpicture}[scale=4]
      \draw[->] (-.2, 0) -- (1.2, 0) node[right] {$x$};
      \draw[->] (0, -.2) -- (0, 1.2) node[above] {$y$};
      \draw[domain=0:1, smooth, variable=\x, blue] plot ({\x}, {2 *\x*\x - \x*\x*\x*\x});
      \draw[domain=0:1, smooth, variable=\x, red] plot ({\x}, {\x});
      \draw (0.618, 0.618) circle (.01);
      \draw[dashed] (.618, 0) -- (.618, .618);
    \end{tikzpicture}
  \end{center}
  \caption{The function $f_{1}(p) = 2p^{2}-p^{4}$ for the diamond hierarchical lattice in Figure~\ref{f:d_k}.}
  \label{f:function_f}
\end{figure}

We now make an important observation concerning the recursive nature of the events $C_k$.
For any $k \geq 0$, recalling from \cref{eq:decompose} that $G_{k + 1}$ is given by joining together the graphs $G_k^e$, for $e \in G_1$, we observe that
\begin{display}
  \label{eq:recursive_crossing}
  the event $C_{k+1}$ occurs if and only if there exists a path $\gamma$ from $a_1$ to $b_1$ on $G_1$, such that for each edge $e$ in $\gamma$, the graph $G_k^e$ is crossed.
\end{display}

We now use the above to find a recursive relation between the functions $f_k$.
The special case $k = 1$ is going to be the base of this recursive relation and so we denote $f(p) := f_1(p)$.
From~\cref{eq:recursive_crossing}, one can see that:
\begin{equation}
  \label{e:recursion}
  f_k = f \circ f_{k - 1} = \dots = f^{(k)}.
\end{equation}
Using the relation above, this crossing probability has been studied using techniques from Dynamical Systems in various previous works, see for instance \cite{levitan}.
In particular, one should start by finding the fixed points of $f$.
The result below is the fundamental step in proving~\Cref{t:p_c_constant} and many other results throughout this text.
\begin{theorem}
  \label{t:crossing}
  For any directed graph $G_{1}$ satisfying \cref{e:hypothesis}, we have
  \begin{enumerate}[\quad a)]
  \item ``Uniqueness of phase transition'':\\
    $f$ has exactly three fixed points, namely: $0, 1$ and $p_\star = p_\star(G_1) \in (0, 1)$.
  \item ``The critical point $p_\star$ is a hyperbolic repeller'':\\
    While $f'(0) = f'(1) = 0$,
    \begin{equation}
      \label{eq:lower-bound-zeta}
      \zeta := f'(p_\star) \geq \bigg( 1 - \frac{(1-p_\star)^{\deg(a_1) \vee \deg(b_1)}}{2} \bigg)^{-1} > 1.
    \end{equation}
  \item ``The critical point can be determined in terms of crossing probabilities'':\\
    \begin{equation}
      \label{eq:3}
      p_\star = \sup \{p \in [0, 1]: \lim_k f_k(p) = 0 \}.
    \end{equation}
  \end{enumerate}
\end{theorem}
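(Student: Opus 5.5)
The plan is to treat $f=f_1$ as the probability of a monotone event, $f(p)=\mathbb{P}^1_p(C_1)$, and to obtain everything from its behaviour at the endpoints together with a Moore--Shannon type differential inequality.

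\emph{Endpoint behaviour.} Since $\d_{G_1}(a_1,b_1)>1$, every open crossing uses at least two edges. Hence $f(p)=O(p^2)$ as $p\to 0$, which gives $f'(0)=0$ and $f(p)<p$ for small $p>0$. Since there are two edge-disjoint paths from $a_1$ to $b_1$, Menger's theorem says every cut set separating $a_1$ from $b_1$ has at least two edges. Therefore $1-f(p)=O((1-p)^2)$, which gives $f'(1)=0$ and $f(p)>p$ for $p$ close to $1$. The intermediate value theorem then yields at least one fixed point in $(0,1)$.

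\emph{Uniqueness via a repelling property.} The key step is an inequality of the form
\begin{equation*}
  p(1-p)\,f'(p) \;\geq\; \frac{f(p)\,\big(1-f(p)\big)}{1-\frac{(1-p)^{d}}{2}}, \qquad d=\deg(a_1)\vee\deg(b_1),\quad p\in(0,1).
\end{equation*}
This is a strengthened Moore--Shannon inequality, and it can hold only because $C_1$ is not a single edge. At any fixed point $p=f(p)\in(0,1)$ it gives $f'(p)\geq (1-(1-p)^d/2)^{-1}>1$, so every interior fixed point is a strict repeller; this is exactly the bound in (b).

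Uniqueness then follows from elementary one-dimensional dynamics. Between two consecutive zeros of $f(p)-p$ at which the graph of $f$ crosses the diagonal upward, there must be a zero where $f'\leq 1$, contradicting the inequality. Tangential zeros are likewise excluded, since at each of them $f'>1$. Thus $p_\star$ is unique, $f(p)<p$ on $(0,p_\star)$, and $f(p)>p$ on $(p_\star,1)$.

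\emph{Part (c).} On $(0,p_\star)$ the iterates $f^{(k)}(p)$ decrease monotonically. They converge to a fixed point, which must be $0$. On $(p_\star,1)$ they increase to $1$, and $p_\star$ itself is fixed. Combined with \eqref{e:recursion}, this identifies $p_\star$ as the supremum in \eqref{eq:3}.

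\emph{Main obstacle: the differential inequality.} Plain Moore--Shannon gives only a strict inequality without an explicit constant. To get the quantitative version, I would first try the Russo formula $p(1-p)f'(p)=\sum_e \mathrm{Cov}_p(\omega_e,\mathbf 1_{C_1})$, equivalently $\mathrm{Cov}_p(N,\mathbf 1_{C_1})$ with $N$ the number of open edges, and redo the Moore--Shannon argument, which compares $\mathbf 1_{C_1}$ with a single edge via a coupling or induction on edges.

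The explicit gain should come from the event $D$ that all edges at the higher-degree endpoint are closed, which has probability $(1-p)^d$. On $D$ the crossing fails deterministically, and a single-edge network would not have this correlation structure. I expect to extract the factor $1/2$ by symmetrizing between the roles of open and closed, i.e. using the dual cut event at the other endpoint, and by keeping track of how $D$ enters the covariance. If the induction approach is awkward, a fallback is the logit-coordinate argument: show that $x\mapsto \mathrm{logit}\, f(\mathrm{logit}^{-1}x)$ has slope at least $(1-(1-p)^d/2)^{-1}$, by conditioning on the states of the edges incident to $a_1$.
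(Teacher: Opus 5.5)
Your scaffolding is sound: the endpoint estimates (two edge-disjoint paths already force every cut to have at least two edges, so Menger is not needed), the intermediate value argument, the deduction of uniqueness from ``$f'>1$ at every interior fixed point'', and part (c). The gap is the differential inequality. Both (a) and (b) rest on it, but you leave it as a conjecture. None of your suggested routes is shown to produce the constant, and the mechanism you guess is off: the factor $1/2$ does not come from open/closed duality. Moreover, plain Moore--Shannon is exactly Efron--Stein plus Russo, $\mathrm{Var}_p(\mathbf 1_{C_1})\le p(1-p)\sum_e\mathbb{P}_p(e\text{ pivotal})$. That bound charges every edge as if it always had to be inspected. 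Redoing its proof by coupling or by induction on edges gives no way to improve the constant unless you quantify how often an edge can be left uninspected.

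That quantification is the missing idea. It is the OSSS inequality in its $p$-biased form, $\mathrm{Var}_p(g)\le p(1-p)\sum_e\delta_e(p)\,\mathbb{P}_p(e\text{ pivotal})$, applied to a specific randomized algorithm. The algorithm picks $X\in\{a_1,b_1\}$ uniformly, reveals the edges at $X$, stops if they are all closed, and otherwise reveals everything. Since $a_1$ and $b_1$ are at distance at least $2$, an edge at $a_1$ is skipped whenever $X=b_1$ and all edges at $b_1$ are closed. An edge at neither endpoint is skipped whenever all edges at $X$ are closed. Hence $\delta_e(p)\le 1-(1-p)^{\deg(a_1)\vee\deg(b_1)}/2$ for every $e$ and every $p$. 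Combined with $f'=\sum_e\mathbb{P}_p(e\text{ pivotal})$, this gives precisely your inequality on all of $(0,1)$. With this ingredient, your plan becomes the one-scale argument of Remark~\ref{rmk:simpler_proof}, extended from fixed points to all $p$.

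The paper's main proof instead works at a fixed point $p_\circ$ on $G_k$. There the recursive algorithm has revealment at most $(1-(1-p_\circ)^d/2)^k$ (Lemma~\ref{l:revealment}, which uses that every sub-block is crossed with probability $p_\circ$). Also $\mathrm{Var}(\mathbf 1_{C_k})=p_\circ(1-p_\circ)$ and $f_k'(p_\circ)=f'(p_\circ)^k$. So even OSSS without the factor $p(1-p)$ suffices after taking $k$-th roots.

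Finally, (a) and $\zeta>1$ need no constant at all. Equality in Efron--Stein forces a Boolean function to be constant or a (anti-)dictator, and $\mathbf 1_{C_1}$ is neither under \eqref{e:hypothesis}. So the strict Moore--Shannon inequality already gives $f'(p_\circ)>1$ at every interior fixed point. Only the explicit bound in (b) needs the revealment argument.
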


The proof of the above theorem will be given in a subsection below.

\begin{remark}
  At the critical point $p_{\star}(G_{1})$, it holds that
  \begin{equation}
    \label{e:cross_at_p_star}
    \mathbb{P}^{k}_{p_\star}(C_k) = p_\star, \qquad \text{ for every $k \geq 0$}.
  \end{equation}
\end{remark}

\begin{remark}\label{rmk:cut_set}
Let $\cut(G_{1})$ denote the size of the minimum edge cut-set between $a_{1}$ and $b_{1}$ in $G_{1}$. With this notation, hypothesis~\eqref{e:hypothesis} can be rewritten as $\d_{1}(a_{1}, b_{1})>1$ and $\cut(G_{1}) > 1$. Notice that the crossing probability $f$ always satisfies
\begin{equation}\label{eq:trivial_bounds}
p^{\d_{1}(a_{1},b_{1})} \leq f(p) \leq 1-(1-p)^{\cut(G_{1})}.
\end{equation}
In particular, hypothesis~\eqref{e:hypothesis} states that $f$ presents non-linear behavior near zero and one. Furthermore, the bounds above (and the fact that the inequality on right-hand side above is strict if $\d_{1}(a_{1}, b_{1})>1$) immediately yield
\begin{equation}
p_{\star}(G_{1}) = \begin{cases}
 0, & \qquad \text{if } \d_{1}(a_{1},b_{1})=1, \\
 1, & \qquad \text{if } \cut(G_{1})=1 \text{ and } \d_{1}(a_{1},b_{1}) > 1.
\end{cases}
\end{equation}
As a consequence, hypothesis~\eqref{e:hypothesis} is equivalent to the existence of a non-trivial phase transition.
\end{remark}

\begin{remark}\label{rmk:tassion}
  The two observations below have been made by Vincent Tassion, to whom we are grateful.
  \begin{enumerate}[\quad a)]
  \item Consider the two-dimensional grid $\{1, \dots, n\} \times \{1, \dots, n - 1\}$ with edges connecting nearest neighbor vertices.
    A classical duality argument shows that (under bond percolation at $p = 1/2$) the probability to find a crossing from left to right of this rectangle is $1/2$.
    However, it is not clear: i) if, for every $n \geq 3$, the derivative $\frac{d}{dp} \mathbb{P}_p(\text{cross})\big|_{p = 1/2}$ is strictly larger than one. ii) if, for every $n \geq 3$, $1/2$ is the only fixed point of $p \mapsto \mathbb{P}_p (\text{cross})$.
    Vincent observed that our Theorem~\ref{t:crossing} can be applied in this context (after we collapse the left and right sides of the grid into the vertices $a_1$ and $b_1$), answering both questions above affirmatively.
  \item One possible way to prove item $a)$ of Theorem~\ref{t:crossing} would be to show that the derivative $f'(p)$ is unimodal, meaning that it has a unique local maximum.
    However, as noticed by Vincent again, this is not the case for every graph that satisfies our hypothesis \cref{e:hypothesis}.
    In fact, as a counterexample, we can consider the $n \times (n - 1)$ grid from the above example (with very large $n$), concatenated in series with a diamond.
    The crossing probability for the diamond is $f(p) = 2p^2 - p^4$, see Figure~\ref{f:function_f}, whose derivative has a maximum at a point $\bar{p} > 0.5$.
    The effect of concatenating this graph with a grid in series is to effectively multiply $f$ with the crossing probability $g_n$ of the grid, which converges to the step function $1_{p > 0.5} + 0.5 \cdot 1_{p = 0.5}$, as $n$ grows.
    Thus, the derivative of the resulting product will have at least two modes, one near $1/2$ and another near~$\bar{p}$.
  \end{enumerate}
\end{remark}

For some fixed examples of graphs, such as the diamond shown in Figure~\ref{f:d_k}, the function $f$ is explicit and one can prove \cref{t:crossing} using real analysis.
However, in order to prove the above theorem in its full generality, we introduce an algorithm that determines crossing events, which will later be used in combination with OSSS inequality from~\cite{OSSS}.

\paragraph{Algorithm for crossing events.}

In what follows, we are going to define a family of randomized algorithms $(\mathcal{A}_k)_{k \geq 0}$ that will determine the occurrence of the events $C_k = [a_k \longleftrightarrow b_k]$ in $G_k$.

The definition will be recursive and for the base case, we simply let $\mathcal{A}_0$ be the algorithm that reveals the state of the edge $\{a_0, b_0\}$ in $G_0$.
For $k \geq 0$, we now define $\mathcal{A}_{k + 1}$ in terms of $\mathcal{A}_k$.
Recall $G_{k + 1}$ can be obtained by replacing every edge $e$ of $G_1$ with a copy of $G_k$, which according to the notation introduced in \cref{eq:decompose} is denoted $G_k^e$.
Moreover, by \cref{eq:recursive_crossing}, the crossing of $G_{k + 1}$ can be determined by finding a path of crossed sub-graphs $G_k^e$, linking $a_{k+1}$ to $b_{k+1}$.
We construct $\mathcal{A}_{k + 1}$ as follows:
\begin{enumerate}[\quad i)]
\item Choose a random end-vertex of $G_1$, $X \in \{a_1, b_1\}$ with probability one half for each choice;
\item Run the algorithm $\mathcal{A}_k$ on each graph $G_k^e$ where $e$ is adjacent to $X$ (that is, for all $(G_k^e)_{X \in e}$);
\item If in the above step, all the probed graphs have been determined to be closed (not-crossed), then the algorithm $\mathcal{A}_{k + 1}$ can terminate, since we know that the crossing event $C_{k + 1}$ did not occur.
\item Otherwise, execute the algorithm $\mathcal{A}_k$ on all the remaining sub-graphs $(G_k^e)_{X \not \in e}$.
\end{enumerate}

Let us first note that the algorithms $\mathcal{A}_{k + 1}$ defined above indeed determine the occurrence of $C_{k + 1}$.
In fact, if the graphs probed in step $ii)$ are all closed, then, by \cref{eq:recursive_crossing}, the event $C_{k+1}$ cannot hold, since any crossing will necessarily include a crossing of some $G_k^e$ for $e$ containing $X$.
Alternatively, if that was not the case, the algorithm $\mathcal{A}_{k + 1}$ is indeed capable of inferring the occurrence of $C_{k + 1}$, since this event is measurable with respect to $\sigma({\bf 1}\{G_k^e \text{ is crossed}\}: e \in \Edge_{1})$, again using \cref{eq:recursive_crossing}.

In order to employ the OSSS inequality, we need to analyze the revealment of the algorithms above.
For any edge $e \in \Edge_{k}$, let
\begin{equation}
  \label{e:max_revealment}
  \delta_{e}(p) := \mathbb{P}_p^{k} \big( \text{$e$ is revealed during the execution of $\mathcal{A}_k$} \big),
\end{equation}
where the above probability also includes the extra randomness used in the definition of $\mathcal{A}_k$.

\begin{lemma}
  \label{l:revealment}
  If $p_\circ$ is a fixed point of $f$, then, for any $k \geq 0$,
  \begin{equation}
    \label{e:revealment}
    \max_{\text{$e \in \Edge_k$}} \delta_e(p_{\circ})
    \leq \bigg( 1 - \frac{(1-p_\circ)^{\deg(a_1)\vee\deg(b_1)}}{2} \bigg)^k.
  \end{equation}
\end{lemma}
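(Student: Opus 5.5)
We argue by induction on $k$, with the bound
\begin{equation*}
  \rho := 1 - \frac{(1-p_\circ)^{\deg(a_1)\vee\deg(b_1)}}{2}
\end{equation*}
playing the role of a one-step contraction factor. For $k=0$ the single edge is always revealed, so $\delta_e(p_\circ)=1=\rho^0$. For the inductive step we fix $e \in \Edge_{k+1}$ with address $\Addr_{k+1}(e) = e_1 w$, where $e_1 \in \Edge_1$ is the first letter and $w \in \Edge_1^{k}$. By \cref{eq:decompose}, $e$ lies in the sub-graph $G_{k}^{e_1}$, a copy of $G_k$. The key structural observation is that $e$ can only be revealed during the single run of $\mathcal{A}_k$ on $G_k^{e_1}$. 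That run uses randomness (the edge states in $G_k^{e_1}$ and the internal coin flips of $\mathcal{A}_k$) independent of whether the run is launched at all, which depends only on $X$ and on the other sub-graphs.

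The next step is to bound the probability that $\mathcal{A}_{k+1}$ runs $\mathcal{A}_k$ on $G_k^{e_1}$ at all, and show it is at most $\rho$. We split according to $e_1$.

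\emph{Case 1: $e_1$ is adjacent to neither $a_1$ nor $b_1$.} The run happens only if step iii) fails. Take $X=a_1$, which has probability $1/2$. Each of the $\deg(a_1)$ sub-graphs $G_k^{e'}$ with $a_1 \in e'$ is crossed with probability $f_k(p_\circ)=p_\circ$, by \cref{e:recursion} and the fixed point property, independently of the others. So all of them are closed with probability $(1-p_\circ)^{\deg(a_1)} \ge (1-p_\circ)^{\deg(a_1)\vee\deg(b_1)}$. The same holds for $X=b_1$. Hence the run is skipped with probability at least $\tfrac12(1-p_\circ)^{\deg(a_1)\vee\deg(b_1)}$.

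\emph{Case 2: $e_1$ is adjacent to $a_1$ but not $b_1$, or symmetrically.} When $X=b_1$, $G_k^{e_1}$ is probed in step iv) only. By the same computation it is skipped with probability at least $\tfrac12(1-p_\circ)^{\deg(b_1)}$, which again is at least $\tfrac12(1-p_\circ)^{\deg(a_1)\vee\deg(b_1)}$.

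\emph{Case 3: $e_1$ is adjacent to both $a_1$ and $b_1$.} This would mean $\d_1(a_1,b_1)=1$, which hypothesis \eqref{e:hypothesis} excludes. I would check that the lemma is indeed intended under \eqref{e:hypothesis}, as in Theorem~\ref{t:crossing}. This case is exactly where the argument would otherwise break.

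Combining the cases with independence gives
\begin{equation*}
  \delta_e(p_\circ) \le \rho \cdot \delta^{(k)}_{w}(p_\circ) \le \rho^{k+1},
\end{equation*}
where $\delta^{(k)}_w$ denotes the revealment of the edge with address $w$ in a standalone copy of $G_k$. The first inequality uses the independence noted above, together with the fact that conditioned on being launched, the run on $G_k^{e_1}$ is distributed exactly as $\mathcal{A}_k$ on $G_k$ at parameter $p_\circ$. The second is the induction hypothesis.

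The main subtle point is the independence and decoupling. The event that step iv) is executed depends on the sub-graphs adjacent to $X$, and in Case 2 it must not involve $G_k^{e_1}$ itself. It does not, because when $X=b_1$ the sub-graph $G_k^{e_1}$ is not adjacent to $X$. The fixed-point identity $f_k(p_\circ)=p_\circ$ is what makes the bound uniform in $k$, since it keeps the crossing probability of every sub-graph equal to $p_\circ$ at every level.
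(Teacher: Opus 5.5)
Your proof is correct and follows the same route as the paper. It runs an induction on $k$ and uses the hypothesis that $a_1$ and $b_1$ are at distance at least $2$, so the top-level sub-graph containing $e$ is adjacent to at most one endpoint. It then bounds the probability that $\mathcal{A}_k$ is launched on that sub-graph by $1-\tfrac12(1-p_\circ)^{\deg(a_1)\vee\deg(b_1)}$ using $f_k(p_\circ)=p_\circ$, and multiplies by the inductive bound. Your explicit handling of the independence between the launch event and the internal run, and of the case where the sub-graph touches neither endpoint, spells out what the paper covers with ``the same calculation''.
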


\begin{proof}
  The proof will follow by induction in $k$, with the base case $k = 0$ being obvious.
  Suppose now that \cref{e:revealment} is true for $k \geq 0$ and fix some edge $e_\star \in \Edge_{k + 1}$.

  Recalling that $\d_1(a_1, b_1) \geq 2$, the family of graphs $(G_k^e)_{a_{1} \in e}$ is disjoint from the family $(G_k^e)_{b_{1} \in e}$.
  This means that our fixed edge $e_\star$ cannot belong to both families.
  Suppose first that it belongs to the first family $(G_k^e)_{a_{1} \in e}$ so that
  \begin{equation}
    \begin{split}
      \mathbb{P}^{k + 1}_{p_{\circ}} \big(
      & \text{$e_\star$ is revealed by $\mathcal{A}_{k + 1}$}
        \big)
        = \mathbb{P}^{k + 1}_{p_{\circ}}(X = a_{1})
        \mathbb{P}^k_{p_{\circ}} \big( \text{$e_\star$ is revealed by $\mathcal{A}_k$} \big)\\
      & \quad + \mathbb{P}^{k + 1}_{p_{\circ}}(X = b_{1})
        \mathbb{P}^k_{p_{\circ}} \big( \text{some graph $G_k^e$ with $b \in e$ is crossed} \big)\\
      & \quad \quad \; \mathbb{P}^k_{p_{\circ}} \big( \text{$e_\star$ is revealed by $\mathcal{A}_k$} \big)\\
      & \leq \bigg( 1 - \frac{(1-p_\circ)^{\deg(a_1)\vee\deg(b_1)}}{2} \bigg)^k
        \frac{1}{2} \Big( 1 + \big( 1 - (1-p_\circ)^{\deg(b_1)} \big) \Big)\\
      & \leq \bigg( 1 - \frac{(1-p_\circ)^{\deg(a_1)\vee\deg(b_1)}}{2} \bigg)^{k + 1}.
    \end{split}
  \end{equation}
  The same calculation can be performed if $e_\star$ belongs to the other family $(G_k^e)_{b \in e}$ or if it does not belong to either.
  In any case, since the edge $e_\star$ was arbitrarily chosen, we have concluded the proof of \Cref{l:revealment}.
\end{proof}

\paragraph{Proof of \cref{t:crossing}.}

Given a configuration $\omega_{k}$ and an edge $e \in G_{k}$, define the configuration $\omega_{k}^{e}$ by changing the configuration $\omega_{k}$ only at $e$. We say that $e$ is pivotal for the pair $(C_{k}, \omega_{k})$ if
\begin{equation}\label{eq:def_pivotal}
\textbf{1}_{C_{k}} (\omega_{k}) \neq \textbf{1}_{C_{k}} (\omega_{k}^{e}).
\end{equation}
We now turn to the proof of the theorem.

\begin{proof}[Proof of \cref{t:crossing}]
  Concerning point $a)$, we start with the trivial observation that $0$ and $1$ are fixed points for the crossing function $f$, since in these cases the occurrence of a crossing is deterministically established.
  Notice that $f$ is infinitely differentiable as it is a polynomial in $p$.
  Moreover, by Russo's formula (see Theorem~2.25 in \cite{Gri99}),
  \begin{equation}
    \label{eq:derivative_f}
    \frac{df}{dp} = \mathbb{E}_p \big[ \# \{ \text{pivotal edges to $C_1$} \} \big],
  \end{equation}
  which is clearly zero for $p \in \{0, 1\}$, since $\d_{1}(a_1, b_1) \geq 2$ by \cref{e:hypothesis}.
  This fact implies the existence of at least one other fixed point of $f$ in $(0, 1)$.
  In order to prove its uniqueness, it will be enough to show that for any $p_\circ \in (0, 1)$ such that $f(p_\circ) = p_\circ$, we have $f'(p_\circ) > 1$, which we will do by proving the bound \cref{eq:lower-bound-zeta} for $f'(p_\circ)$.
  This will show not only the uniqueness of the fixed point in $(0, 1)$, but also $b)$.

  Suppose thus that $p_\circ \in (0, 1)$ is such that $f(p_\circ) = p_\circ$ and observe by the chain rule that $f'_k(p_\circ) = (f^{(k)})'(p_\circ) = (f'(p_\circ))^k$.
  We recall the OSSS inequality (see~\cite{OSSS}) which states that for any algorithm $\mathcal{A}_k$ revealing $f_k$, we have
  \begin{equation}
    \label{eq:osss}
    \Var_{p_{\circ}}(f_k) \leq \sum_{e \in \Edge_k} \delta_e(p_{\circ}) \mathbb{P}_{p_\circ} (e \text{ is pivotal to $C_k$}).
  \end{equation}
  From the above and again by Russo's formula, we obtain
  \begin{equation}
    \label{eq:largederivative}
    \begin{split}
      f'_k(p_\circ) & = \sum_{e \in G_k} \mathbb{P}_{p_\circ} (e \text{ is pivotal to $C_k$})
      \geq \Var_{p_{\circ}}(f_k) \frac{1}{\max_e \delta_e(p_{\circ})} \\
      & \geq p_\circ (1 - p_\circ) \bigg( 1 - \frac{(1-p_\circ)^{\deg(a_1)\vee\deg(b_1)}}{2} \bigg)^{-k},
    \end{split}
  \end{equation}
  so that $f'(p_\circ) = \liminf f_k'(p_\circ)^{1/k}$ satisfies \cref{eq:lower-bound-zeta}, finishing the proof of $a)$ and $b)$.

  We now turn to the proof of $c)$ and we start by noting that $f|_{[0, p_\star]}$ smaller than or equal to the identity function on the same interval.
  Russo's formula implies that, for every $p < p_\star$, $f^{(k)}(p)$ is positive and strictly increasing.
  This, together with the fact that $f$ is continuous and its only fixed point below $p_\star$ is zero, implies that $\lim_k f_k(p) = 0$.
  A similar argument shows that every $p > p_\star$ satisfies $\lim_k f_k(p) = 1$, showing $c)$.
\end{proof}

\begin{remark}\label{rmk:simpler_proof}
  After one of the authors presented this result at a conference, he was approached by Senya Shlosman, intrigued by the fact that the statement of \cref{t:crossing} is not related to hierarchical lattices (but rather about Boolean functions), while the proof we present above involves different scales $k$.
  Later he contacted Yuval Peres, who, together with Gan Chenyu, came up with a simpler proof of the statement that we now present.

  Consider the algorithm $\mathcal{A}_1$ defined above and recall the strengthened version of the OSSS inequality:
  \begin{equation}
    \Var_{p_\circ}(f)
    \leq p_\circ (1 - p_\circ) \sum_{e \in G_1}
    \delta_e(p_\circ) \mathbb{P}_{p_\circ} (\text{$e$ is pivotal to $C_1$}),
  \end{equation}
  see \cite{OSSS} (noting that their Boolean functions take values in $\{-1, 1\}$ instead of $\{0, 1\}$) and the observation in page 2, after the definition of influence.
  On the other hand, by the assumption that $p_\circ$ is a fixed point of $f$, $\Var_{p_\circ}(f) = p_\circ (1 - p_\circ)$, so that we can obtain the simplified bound $f'(p_\circ) \geq 1/(\max_{\text{$e \in \Edge_1$}} \delta_e(p_{\circ}))$, which is larger then one by~\eqref{e:revealment}.

  We are grateful to Senya, Yuval, and Gan for this remarkable simplification.
\end{remark}

\subsection{Off-critical behavior and correlation length}

  We now consider the off-critical behavior of the crossing probabilities $f_k$.
  In this section we analyze two different quantities of interest.
  First, in the sub-critical phase, we examine the correlation length, while for the super-critical phase we investigate the surface tension.
  We explicitly obtain the critical exponents for both quantities.

  Roughly speaking, these two quantities measure how far ones needs to observe the configuration in order to detect a behavior that is characteristic of being away from criticality.
  For $p < p_c$, this would mean to look at a large enough box, so that only ``mesoscopic clusters'' are present.
  While for $p > p_c$, looking at a sufficiently large box guarantees that only one giant component exists and it touches all ``mesoscopic sub-boxes''.

\medskip

Recall from Remark~\ref{rmk:cut_set} that $\cut(G_{1})$ denotes the size of the minimum edge cut-set between $a_{1}$ and $b_{1}$ in $G_{1}$.
\nc{c:corr_exp_1}
\nc{c:corr_exp_2}
\nc{c:surf_exp_1}
\nc{c:surf_exp_2}
\begin{theorem}
  \label{th:off-critical-decay}
  For any directed graph $G$ satisfying \cref{e:hypothesis}, we have
  \begin{enumerate}[\quad a)]
  \item \label{th:off-critical-existence}
    (Existence of correlation length and surface tension) The function $\xi: (0, p_\star) \to (0, \infty)$ given by
    \begin{equation}
      \label{eq:off-critical-decay-sub}
      \xi(p) = - \lim_k \frac{\log f_k(p) }{\d_{1}(a_1, b_1)^k}
    \end{equation}
    is well-defined and non-decreasing.
    The non-increasing function $\eta: (p_\star, 1) \to (0, \infty)$,
    \begin{equation}
      \label{eq:off-critical-decay-super}
      \eta(p) = - \lim_k \frac{\log \big( 1- f_k(p) \big)}{\cut(G_{1})^k}
    \end{equation}
    is well-defined.
  \item \label{th:off-critical-exponent}
    (Critical exponents for the correlation length and surface tension) There exist constants $\uc{c:corr_exp_1}, \uc{c:corr_exp_2} > 0$ such that
    \begin{equation}
      \label{eq:critical_exponent_correlation}
      \uc{c:corr_exp_1} (p_\star - p)^{-\nu}
      \leq \xi(p)
      \leq \uc{c:corr_exp_2} (p_\star - p)^{-\nu},
      \text{ for $p \in (p_\star/2, p_\star)$}
    \end{equation}
    where $\nu = \log_\zeta \d_{1}(a_1, b_1)$.
    Moreover, there exist constants $\uc{c:surf_exp_1}, \uc{c:surf_exp_2} > 0$ such that
    \begin{equation}
      \label{eq:critical_exponent_surface}
      \uc{c:surf_exp_1} (p - p_\star)^{-\mu}
      \leq \eta(p)
      \leq \uc{c:surf_exp_2} (p - p_\star)^{-\mu},
      \text{ for $p \in (p_\star, 1 - (1-p_\star)/2)$}
    \end{equation}
    with $\mu = \log_\zeta \cut(G_{1})$.
  \end{enumerate}
\end{theorem}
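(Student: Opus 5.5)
The plan is to follow the dynamics of $f$ near its three fixed points, using the facts from \cref{t:crossing}: $f'(0)=f'(1)=0$, $\zeta=f'(p_\star)>1$, and $f_k=f^{(k)}$. One issue has to be settled first, because it decides what the argument can establish for $\xi$ as written in \cref{eq:off-critical-decay-sub}. Each $f_k$ is non-decreasing in $p$, so $p\mapsto -\log f_k(p)/\d_1(a_1,b_1)^k$ is non-increasing, and the limit can only inherit that monotonicity. Moreover the mechanism below makes this rate vanish as $p\uparrow p_\star$, at rate $(p_\star-p)^{\nu}$. Hence, taken literally with the minus sign, the displayed $\xi$ is non-increasing and of order $(p_\star-p)^{+\nu}$. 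The non-decreasing quantity of order $(p_\star-p)^{-\nu}$ is its reciprocal, the correlation length $1/\xi$, which is also the escape scale $\d_1(a_1,b_1)^{k_0(p)}$ introduced below. The key comparison, that $\d_1(a_1,b_1)^{k_0(p)}$ is of order $(p_\star-p)^{-\nu}$, is exactly the display \cref{eq:critical_exponent_correlation} with $\xi$ replaced by $1/\xi$. So the argument establishes the statement in that form and does not establish it for the $\xi$ of \cref{eq:off-critical-decay-sub} as written.

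\emph{Existence.} Write $d=\d_1(a_1,b_1)\ge2$ and let $m\ge1$ be the number of geodesics from $a_1$ to $b_1$, so that $f(x)=m x^{d}(1+O(x))$ as $x\to0$. A simple crossing in $G_k$ is obtained by crossing the $d$ subgraphs of a geodesic, and the smallest relevant crossing probability is the one along geodesics, which gives
\begin{equation*}
f(x)\ge x^{d}\quad\text{and}\quad f(x)\le Cx^{d}\ \text{on }[0,x_0].
\end{equation*}
Put $a_k=-\log f_k(p)$. Since $f_k(p)\to0$ for $p<p_\star$ by \cref{t:crossing} c), we eventually have $a_{k+1}=d\,a_k-\log m+o(1)$. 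It follows that $a_k/d^k$ is eventually monotone up to the summable error $\sum_k C/d^{k}$, so the limit exists and is positive. The proof for $\eta$ is identical, using $1-f(1-y)=c\,y^{\cut(G_1)}(1+O(y))$ near $1$.

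\emph{Scaling.} Let $p<p_\star$ and $\varepsilon=p_\star-p$. Because $p_\star$ is a hyperbolic repeller, there is $\delta>0$ such that $f$ is $C^1$-conjugate to $x\mapsto\zeta x$ on $(p_\star-\delta,p_\star+\delta)$. Define the exit time $k_0(p)=\min\{k:f_k(p)<p_\star-\delta\}$; the conjugacy gives $\zeta^{k_0}\varepsilon\asymp1$. From time $k_0$ on, the orbit starts from a point of $[\delta',p_\star-\delta]$, and after a bounded number $j$ of further steps it enters $[0,x_0]$. By the existence step, $-\log f_{k_0+j+n}(p)\asymp d^{n}$ uniformly over such starting points, and compactness of the starting set makes the constants uniform. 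Combining, the rate in \cref{eq:off-critical-decay-sub} is of order $d^{-k_0(p)}$. Since
\begin{equation*}
d^{-k_0}=(\zeta^{-k_0})^{\log_\zeta d}\asymp\varepsilon^{\nu},
\end{equation*}
the rate is of order $(p_\star-p)^{\nu}$ and the correlation length $1/\xi\asymp d^{k_0}$ is of order $(p_\star-p)^{-\nu}$, with $\nu=\log_\zeta d$. The supercritical case is the same argument near $1$ with $d$ replaced by $\cut(G_1)$, giving $\mu=\log_\zeta\cut(G_1)$.

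The main obstacle is making the constants uniform: the conjugacy near $p_\star$ must be controlled, for example via Sternberg linearisation or a direct distortion bound on $\sum_k|f'(x_k)-\zeta|$, and the constants of the existence step must be uniform in the starting point after exit. The restriction $p\in(p_\star/2,p_\star)$ keeps $\varepsilon$ bounded away from $p_\star$, so only finitely many initial steps lie outside both linearisation regions.
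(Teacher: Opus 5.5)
Your argument is correct for the corrected statement and is essentially the paper's proof. The paper also writes $f(p)=F(p)p^{d}$ with $F$ bounded away from $0$ and $\infty$ on $[0,p_\star]$ and telescopes $\log f_k(p)/d^k$. It then uses a $C^1$ linearising conjugacy $h$ with $h'(p_\star)=1$ to define the exit time $k(p)$, and sandwiches $f_{k-k(p)+1}(a)\le f_k(p)\le f_{k-k(p)}(a)$ by monotonicity.

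Your sign diagnosis is also right. That same sandwich gives $\xi(p)\asymp d^{-k(p)}\asymp(p_\star-p)^{\nu}$ with $\xi$ non-increasing, so the displayed bounds and the monotonicity claim actually hold for the correlation length $1/\xi$. You should say explicitly that the same reversal affects $\eta$: as defined, it is non-decreasing and of order $(p-p_\star)^{\mu}$.
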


  \begin{remark}\label{rmk:off_critical}
  Regarding the result above, we note the following.
    \begin{enumerate}
    \item In Levitan~\cite{levitan}, the computation of the critical exponent $\nu$ was carried out for the diamond hierarchical lattice (see Figure~\ref{f:d_k}), obtaining $\nu = \frac{1}{\log_{2}(6-2\sqrt{5})} \approx 1.6352$.
    \item It is widely believed that macropcopic observables of percolation, such as the correlation length, the surface tension, and the percolation probability, should behave as powers of $|p-p_{c}|$ in the neighborhood of the critical points, and the exponents controlling these decays obey the so-called ``scaling relations.'' While these exponents are conjectured to exist for Bernoulli percolation in $\Z^{d}$ for every dimension $d$, this still remains open. Kesten~\cite{kesten1987scaling, kesten1987scaling2} proved that, provided such exponents do exist for $d=2$, they satisfy the ``scaling relations.'' The existence of these exponents was established for site percolation in the triangular lattice, see for example the lecture notes~\cite{werner2007lectures} and references therein. More recently, in a series of three papers, Hutchcroft~\cite{hutchcroft2025critical1, hutchcroft2025critical2, hutchcroft2025critical3} established the validity of several scaling relations for long-range percolation in $\Z^{d}$. For a more thorough account of the critical exponents and their scaling relations we refer the reader to~\cite{Gri99}.
    \item While, in the case of the triangular lattice mentioned above, the behavior of quantities such as $\xi(p)$ or $\eta(p)$ is obtained up to smaller order multiplicative errors, meaning that~\eqref{eq:critical_exponent_correlation} is stated as $\xi(p) \sim (p-p_{c})^{-\nu+o(1)}$ (see for example~\cite[Chapter 6]{werner2007lectures}), with a similar expression for~\eqref{eq:critical_exponent_surface}, our result is sharper in the sense that it holds without the lower order corrections.
    \end{enumerate}
  \end{remark}

\begin{proof}
  \nc{c:pert_d}
  In order to prove \cref{eq:off-critical-decay-sub}, we start by observing that
  \begin{equation}
    \label{eq:derivatives_vanish}
    0 = f(0) = \frac{d f}{d p}(0) = \dots = \frac{d^{r-1} f}{d p^{r - 1}}(0),
  \end{equation}
  where $r = \d_{1}(a_1, b_1)$, while $d^r f/d p^r > 0$.
  This can be obtained by either a generalization of Russo's formula (see Theorem~1.5 of \cite{ARPE_MOSSEL_2010}) or by observing that the terms with degree $r$ in the polynomial $f(p)$ are given exactly by the geodesic paths from $a_1$ to $b_1$.

  With this, we use Taylor's expansion to write
  \begin{equation}
    \label{eq:taylor_f_zero}
    f(p) = F(p) p^r
  \end{equation}
  where $F: [0, p_\star] \to (0, \infty)$ is continuous.
  As for the iterations
  \begin{equation}
    \frac{\log f_{k + 1}(p)}{r^{k + 1}}
    = r \frac{\log f_{k}(p)}{r^{k + 1}}
    + \frac{\log F(f_{k}(p))}{r^{k + 1}},
  \end{equation}
  so that we can write a telescopic series
  \begin{equation}
    \frac{\log f_{k}(p)}{r^k}
    = \log(p) + \sum_{j = 0}^{k - 1} \frac{1}{r^{j+1}}
    \log F \big( f_{j}(p) \big),
  \end{equation}
  which clearly converges, since $F$ is bounded away from zero and infinity.
  This proves that the limit defining $\xi(p)$ in \cref{eq:off-critical-decay-sub} exists, the monotonicity of $\xi$ following directly from that of $f$.
  The proof that the limit in \cref{eq:off-critical-decay-super} exists follows exactly the same steps, but analyzing $1-f$ in $[p_\star, 1]$ instead.

  We now turn to the proof of \cref{eq:critical_exponent_correlation}.
  Since $p_\star$ is a hyperbolic fixed point of $f$, we can use Hartman-Grobman's Theorem (see Appendix~\ref{app:auxiliary}) to obtain a neighborhood $(a, b)$ of $p_\star$, a neighborhood $(c, d)$ of zero, and an increasing diffeomorphism $h:(a, b) \to (c, d)$ such that
  \begin{equation}
    \label{eq:hg-correlation}
    h \circ f \circ h^{-1}(x) = \zeta x, \text{ for $x \in (c, d)$}.
  \end{equation}
  In other words, $f$ conjugates locally with a linear map.
  Without loss of generality, we can assume that $a > 0$ and since $p < p_\star$ we get by \cref{eq:3} that
  \begin{equation}
    k(p) := \inf \{k \geq 0: f_{k}(p) < a\}
  \end{equation}
  is finite almost surely.
  The behavior of $k(p)$ will control the early steps of the dynamics and one can use \cref{eq:hg-correlation} to understand the asymptotic behavior of $k(p)$ as $p$ goes to $p_\star$.
  More precisely,
  \begin{equation}
    \label{eq:k_p_with_hg}
    k(p) := \inf \{k \geq 0: \zeta^{k} h(p) < c \}
    = \Big\lceil \log_\zeta(- c) - \log_\zeta \big( -h(p) \big) \Big\rceil
  \end{equation}
  and we can now turn to contraction stage of the dynamics.

  Since $\xi$ is monotone, we see that, for $k \geq k(p)$,
  \begin{equation}
    \frac{\log f_{k - k(p) + 1}(a)}{\d_{1}(a_1, b_1)^k}
    \leq \frac{\log f_{k}(p)}{\d_{1}(a_1, b_1)^k}
    \leq \frac{\log f_{k - k(p)}(a)}{\d_{1}(a_1, b_1)^k}.
  \end{equation}
  Taking the limit on both sides yields
  \begin{equation}
    \frac{\xi \big( f(a) \big)}{\d_{1}(a_1, b_1)^{k(p)}}
    \leq \xi(p)
    \leq \frac{\xi \big( a \big)}{\d_{1}(a_1, b_1)^{k(p)}}.
  \end{equation}
  Plugging \cref{eq:k_p_with_hg} in the equation above and using the fact that the conjugation $h$ is differentiable at $p_{\star}$ and can be chosen so that $h'(p_{\star}) = 1$ (see Proposition~\ref{p:diffeo_conjugation} in Appendix~\ref{app:auxiliary}) implies \cref{eq:critical_exponent_correlation}.
  The proof of \eqref{eq:critical_exponent_surface} is analogous and we omit it here.
\end{proof}

\section{Sharp noise sensitivity}
\label{s:sharp_ns}

In this section, we work with enlarged probability spaces $\tilde{\Omega}_{k} = [0,1]^{\Edge_{k}} \times [0,1]^{\Edge_{k}} \times [0,1]^{\Edge_{k}}$ and consider the probability measure $\mathbb{P}_{k}$ in $\tilde{\Omega}_{k}$ where each marginal is uniformly distributed and independent.
Associate with $e \in \Edge_{k}$ a triple $(U_{e}, \tilde{U}_{e}, U_{e}') \in [0,1]^{3}$.
Given $p \in [0,1]$ and $\varepsilon \in [0,1]$, define the configurations
\begin{equation}\label{eq:resampling}
  \omega_{p}(e) = \textbf{1}_{U_{e} \leq p}
  \quad \text{and} \quad
  \omega_{p}^{\varepsilon}(e) =
  \begin{cases}
    \textbf{1}_{\tilde{U}_{e} \leq p}, & \quad \text{if } U_{e}' \leq \varepsilon, \\
    \omega_{p}(e), & \quad \text{otherwise}.
  \end{cases}
\end{equation}
In words, the fields $\omega_{p}^{\varepsilon}$ is obtained as an $\varepsilon$-resampling of the field $\omega_{p}$.
Although these configurations are not independent from each other, they have the same distribution. Furthermore, under $\mathbb{P}_{k}$, the configuration $\omega_{p}$ has the same distribution as the original configuration $\omega_{p}$ introduced in~\eqref{eq:percolation_k}, measured with respect to $\mathbb{P}_{p}^{k}$.

\begin{theorem}\label{t:sharp_ns}
  There exists a non-decreasing homeomorphism $g: [0,+\infty] \to [0,1]$ such that, if $\varepsilon_{k} \in [0,1]$ satisfies $\lim_{k} \zeta^{k} \varepsilon_{k} = c \in [0, +\infty]$, then
  \begin{equation}
    \lim_{k \to \infty} \mathbb{E}_{k} \big[
    \textnormal{\textbf{1}}_{C_{k}}(\omega_{p_{\star}})
    \textnormal{\textbf{1}}_{C_{k}}(\omega_{p_{\star}}^{\varepsilon_{k}})\big]
    = p_{\star}^{2} g(c)+ p_{\star}(1-g(c)).
  \end{equation}
\end{theorem}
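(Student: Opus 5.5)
Our approach is to reduce the statement to a one-dimensional dynamical system, in the same way that \cref{e:recursion} reduces crossing probabilities to iterating $f$. Fix $p = p_\star$. For each $k$, the pair $\big(\textbf{1}_{C_k}(\omega_{p_\star}), \textbf{1}_{C_k}(\omega^{\varepsilon}_{p_\star})\big)$ is a pair of Bernoulli$(p_\star)$ variables, because both marginals equal $f_k(p_\star) = p_\star$ by \cref{e:cross_at_p_star}. The joint law of such a pair is determined by the single number
\[
q_k(\varepsilon) = \mathbb{E}_k\big[\textbf{1}_{C_k}(\omega_{p_\star})\textbf{1}_{C_k}(\omega^{\varepsilon}_{p_\star})\big] \in [p_\star^2, p_\star].
\]
It is convenient to use the normalized correlation $t = (q - p_\star^2)/(p_\star - p_\star^2) \in [0,1]$.

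The first step is the recursion. By \cref{eq:recursive_crossing}, the pair at level $k+1$ is the function $\textbf{1}_{C_1}$ applied coordinatewise to the pairs attached to the subgraphs $G_k^e$, $e \in \Edge_1$. These $|\Edge_1|$ pairs are i.i.d., since the subgraphs are edge-disjoint and the resampling is done edge by edge. Hence there is a map $\psi:[0,1]\to[0,1]$, depending only on $G_1$, with $t_{k+1} = \psi(t_k)$. Concretely, $\psi(t)$ is the normalized correlation of $\textbf{1}_{C_1}(X)$ and $\textbf{1}_{C_1}(Y)$, where $(X_e, Y_e)_{e \in \Edge_1}$ are i.i.d. pairs of $p_\star$-Bernoulli variables with normalized correlation $t$. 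The base case is $t_0(\varepsilon) = 1 - \varepsilon$, since a single edge satisfies $q_0 = (1-\varepsilon)p_\star + \varepsilon p_\star^2$. Therefore $t_k = \psi^{(k)}(1-\varepsilon_k)$.

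Next we identify $\psi$ using the $p_\star$-biased Fourier--Walsh expansion of $\textbf{1}_{C_1}$. Each pair $(X_e, Y_e)$ is exactly a $t$-correlated copy in the sense of the noise operator. This gives
\[
\psi(t) = \frac{1}{p_\star(1-p_\star)} \sum_{S \neq \varnothing} t^{|S|}\, \hat{f}(S)^2,
\qquad \sum_{S\neq\varnothing}\hat f(S)^2 = \Var_{p_\star}(\textbf{1}_{C_1}) = p_\star(1-p_\star),
\]
where $\hat f(S)$ are the $p_\star$-biased Fourier--Walsh coefficients of $\textbf{1}_{C_1}$, and the variance identity uses $f(p_\star)=p_\star$. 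Consequently $\psi$ is a polynomial with nonnegative coefficients, $\psi(0)=0$ and $\psi(1)=1$, and in particular it is increasing and convex. Moreover
\[
\psi'(1) = \frac{\sum_S |S|\,\hat f(S)^2}{p_\star(1-p_\star)},
\]
and $\sum_S |S|\hat f(S)^2$ is the total $p_\star$-biased influence, which equals $p_\star(1-p_\star) f'(p_\star)$ by Russo's formula. So $\psi'(1) = \zeta > 1$ by \cref{t:crossing}. Convexity together with $\psi'(1) > 1$ rules out $\psi$ being linear, so $\psi(t) < t$ on $(0,1)$. Thus $0$ is the only other fixed point, $\psi^{(k)}(t) \to 0$ for every $t < 1$, and $1$ is a hyperbolic repeller with multiplier $\zeta$.

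Finally we linearize at $t=1$. Since $\psi$ is analytic and $\psi'(1) = \zeta > 1$, Koenigs' theorem (or Hartman--Grobman together with Proposition~\ref{p:diffeo_conjugation}, as in the proof of \cref{th:off-critical-decay}) gives an increasing conjugacy $H$ near $0$ with $H(0)=0$, $H'(0)=1$ and $H\big(1-\psi(1-s)\big) = \zeta H(s)$. This yields
\[
G(c) := \lim_k \psi^{(k)}\big(1 - c\zeta^{-k}\big) = \lim_k \psi^{(k)}\big(1 - H^{-1}(c\zeta^{-k})\big),
\]
which equals $\psi^{(m)}\big(1 - H^{-1}(c\zeta^{-m})\big)$ for every $m$ large enough that $c\zeta^{-m}$ lies in the domain of $H^{-1}$. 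Hence the limit exists, and $G$ is continuous and strictly decreasing with $G(0)=1$. We set $G(+\infty) = 0$; this is the limit of $G(c)$ as $c \to \infty$, because $\psi^{(j)} \to 0$ locally uniformly on $[0,1)$. Putting $g = 1 - G$ gives a non-decreasing homeomorphism $[0,+\infty]\to[0,1]$. Translating back, $q_k \to p_\star^2 + (p_\star - p_\star^2)\,G(c) = p_\star^2 g(c) + p_\star(1 - g(c))$.

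We expect the main technical point to be passing from the exact sequence $1 - c\zeta^{-k}$ to a general sequence $\varepsilon_k$ with $\zeta^k \varepsilon_k \to c$. For $c < \infty$ we plan to sandwich $\varepsilon_k$ between $(c\pm\delta)\zeta^{-k}$, use the monotonicity of $\psi^{(k)}$, and then let $\delta \to 0$ using the continuity of $G$. For $c = \infty$, the lower sandwich $\varepsilon_k \geq M\zeta^{-k}$ for every fixed $M$ gives $\limsup_k t_k \le G(M)$, and $G(M) \to 0$ as $M \to \infty$. The other ingredient that needs checking is the Fourier identification of $\psi$ for the biased noise: one must verify that the pair law induced by the resampling \cref{eq:resampling} is exactly the $t$-correlated law, so that each character $\chi_S$ picks up the factor $t^{|S|}$.
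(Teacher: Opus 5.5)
Your argument is correct. Its skeleton is the same as the paper's: your $\psi$ is the paper's sensitivity function written in the variable $t=1-\varepsilon$, namely $\psi(t)=1-s(1-t)$. Hence $t_k=\psi^{(k)}(1-\varepsilon_k)=1-s^{k}(\varepsilon_k)$, and your $1-G$ is the paper's $g=r^{-1}$.

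The genuine difference is the key lemma that gives convexity of $\psi$ (equivalently concavity of $s$) and $\psi'(1)=s'(0)=\zeta$.
\begin{itemize}
\item The paper gets these from the Tassion--Vanneuville dynamic Margulis--Russo formula. That formula first has to be extended to $p\neq 1/2$, and the ``repeated application'' used for $s''\le 0$ is left implicit.
\item Your $p_\star$-biased Fourier--Walsh expansion gives the explicit formula $\psi(t)=\frac{1}{p_\star(1-p_\star)}\sum_{S\neq\varnothing}\hat f(S)^2t^{|S|}$. Monotonicity, convexity (indeed nonnegativity of all derivatives on $[0,1]$) and $\psi'(1)=\zeta$ are then immediate. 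This is more elementary and self-contained.
\end{itemize}
The step you flag as needing verification is automatic. The law of a pair of Bernoulli$(p_\star)$ variables is determined by $\mathbb{E}[XY]$. Also, $\mathbb{E}[\chi_S(X)\chi_T(Y)]=t^{|S|}\mathbf{1}_{S=T}$ uses only coordinatewise independence, centering and the covariance.

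On the dynamical side, the routes differ as follows.
\begin{itemize}
\item The paper extends the conjugacy to all of $[0,1)$. It concludes via $r(\varepsilon_k)=\varepsilon_k+O(\varepsilon_k^2)$ and continuity of $r^{-1}$.
\item You use only a local conjugacy, monotonicity of $\psi^{(k)}$, and the relation $G(\zeta c)=\psi(G(c))$ to get $G(+\infty)=0$.
\item Your treatment of $c=+\infty$ is actually more careful than the paper's, whose Taylor expansion tacitly assumes $\varepsilon_k\to 0$.
\end{itemize}

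One small point needs fixing. The first equality in your definition of $G$, which replaces $c\zeta^{-k}$ by $H^{-1}(c\zeta^{-k})$, is asserted without proof. It follows from the same sandwich if you phrase it directly in terms of $H^{-1}$. Since $H^{-1}(x)=x+O(x^2)$, any sequence with $\zeta^k\varepsilon_k\to c$ satisfies $H^{-1}\big((c-\delta)\zeta^{-k}\big)\le\varepsilon_k\le H^{-1}\big((c+\delta)\zeta^{-k}\big)$ for large $k$. This gives $G(c+\delta)\le t_k\le G(c-\delta)$ directly, and makes the separate exact-sequence step unnecessary.
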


\begin{remark}
  The theorem above implies that the sequence of Boolean functions $\textbf{1}_{C_{k}}$ is noise sensitive when $\zeta^{k} \varepsilon_{k} \to \infty$.
\end{remark}

For a Booelan function $\mathfrak{f} : \{0,1\}^{n} \to \{0,1\}$, define the functional
\begin{equation}\label{eq:operator_Q}
  Q_{\varepsilon}^{p}(\mathfrak{f})
  = \mathbb{E}_{p} \big[\mathfrak{f}(\omega)
  \mathfrak{f}(\omega^{\varepsilon}) \big],
\end{equation}
where $\mathbb{E}_{p}$ denotes the expectation with respect to the probability distribution such that $\omega \in \{0,1\}^{n}$ has i.i.d.\ entries with distribution $\Ber(p)$ and $\omega^{\varepsilon}$ is obtained as an $\varepsilon$-resampling of $\omega$ (defined analogously to~\eqref{eq:resampling}).

\begin{proposition}[Dynamic Margulis-Russo, Tassion and Vanneuville~\cite{tv2023}]
  \label{prop:dynamic_margulis_russo}
  Given any Boolean function $\mathfrak{f}: \{0,1\}^{n} \to \{0,1\}$, the functional $Q_{\varepsilon}(\mathfrak{f})$ is differentiable in $\varepsilon$ and
  \begin{equation}
    0 \leq -\frac{d}{d \varepsilon} Q_{\varepsilon}^{p}(\mathfrak{f})
    \leq p(1-p)\sum_{i \in [n]} Q_{\varepsilon}^{p}
    \big( i \textnormal{ is pivotal for } \mathfrak{f} \big).
  \end{equation}
  Furthermore, if $\mathfrak{f}$ is increasing, the second inequality is an equality.
\end{proposition}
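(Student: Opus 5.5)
The plan is to compute the $\varepsilon$-derivative coordinate by coordinate, using the explicit resampling coupling, and then to read off both inequalities from the resulting formula.

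\emph{Differentiability.} Write $\omega^{\varepsilon}$ as follows: each coordinate $i \in [n]$ is independently declared ``resampled'' with probability $\varepsilon$, and then $\omega^{\varepsilon}_i$ is a fresh $\Ber(p)$ variable if $i$ is resampled and equals $\omega_i$ otherwise. Conditioning on the set $S$ of resampled coordinates gives
\begin{equation*}
Q^{p}_{\varepsilon}(\mathfrak{f}) = \sum_{S \subseteq [n]} \varepsilon^{|S|}(1-\varepsilon)^{n-|S|}\, \mathbb{E}_p\big[\mathfrak{f}(\omega)\mathfrak{f}(\omega^{S})\big],
\end{equation*}
where $\omega^{S}$ denotes $\omega$ with the coordinates in $S$ resampled. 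This is a polynomial in $\varepsilon$, so it is differentiable.

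\emph{Derivative formula.} Differentiating this multilinear expression in the coordinate-wise resampling probabilities, and evaluating at the common value $\varepsilon$, gives
\begin{equation*}
\frac{d}{d\varepsilon}Q^p_\varepsilon(\mathfrak{f}) = \sum_{i} \Big( \mathbb{E}\big[\mathfrak{f}(\omega)\mathfrak{f}(\omega^{\varepsilon}) \mid i \text{ resampled}\big] - \mathbb{E}\big[\mathfrak{f}(\omega)\mathfrak{f}(\omega^{\varepsilon}) \mid i \text{ not resampled}\big]\Big).
\end{equation*}
Fix $i$ and condition on everything except the $i$-th coordinates. Set $a_t = \mathfrak{f}(\omega \text{ with } \omega_i = t)$ and $b_t = \mathfrak{f}(\omega^{\varepsilon} \text{ with } \omega^\varepsilon_i = t)$ for $t \in \{0,1\}$.
\begin{itemize}
\item If $i$ is not resampled, the conditional expectation is $p\,a_1b_1 + (1-p)\,a_0b_0$.
\item If $i$ is resampled, it is $(p\,a_1 + (1-p)\,a_0)(p\,b_1 + (1-p)\,b_0)$.
\end{itemize}
A one-line computation shows that the difference (resampled minus not resampled) equals $-p(1-p)(a_1-a_0)(b_1-b_0)$. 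Hence
\begin{equation*}
-\frac{d}{d\varepsilon}Q^p_\varepsilon(\mathfrak{f}) = p(1-p)\sum_i \mathbb{E}\big[(a_1-a_0)(b_1-b_0)\big].
\end{equation*}
Here $a_1 - a_0 \neq 0$ exactly when $i$ is pivotal for $\mathfrak{f}$ at $\omega$, and $b_1 - b_0 \neq 0$ exactly when $i$ is pivotal at $\omega^\varepsilon$. Pivotality does not depend on the $i$-th coordinate, and off $i$ the pair $(\omega,\omega^\varepsilon)$ has the $\varepsilon$-resampling law.

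\emph{Upper bound and equality.} Since $(a_1-a_0)(b_1-b_0) \le \mathbf{1}\{i \text{ pivotal in both } \omega \text{ and } \omega^{\varepsilon}\}$, we get the upper bound with $Q^p_\varepsilon(i \text{ is pivotal for } \mathfrak{f})$, understood as the probability that $i$ is pivotal in both configurations. If $\mathfrak{f}$ is increasing, both differences equal $+1$ whenever they are nonzero, so the inequality is an equality.

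\emph{Lower bound.} Nonnegativity of $-\frac{d}{d\varepsilon}Q^p_\varepsilon$ is the step that needs a separate idea, since individual terms $(a_1-a_0)(b_1-b_0)$ can be negative for non-monotone $\mathfrak{f}$. I would use the $p$-biased Fourier–Walsh expansion $\mathfrak{f} = \sum_S \hat{\mathfrak{f}}(S)\chi_S$ with orthonormal characters $\chi_S$. The resampling operator acts diagonally on characters, with eigenvalue $(1-\varepsilon)^{|S|}$ on $\chi_S$, so
\begin{equation*}
Q^p_\varepsilon(\mathfrak{f}) = \sum_S (1-\varepsilon)^{|S|}\,\hat{\mathfrak{f}}(S)^2,
\end{equation*}
which is visibly non-increasing in $\varepsilon$. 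This gives $0 \le -\frac{d}{d\varepsilon}Q^p_\varepsilon(\mathfrak{f})$ and completes the proof.
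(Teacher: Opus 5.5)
Your proof is correct. The paper does not actually prove this proposition. It cites Tassion--Vanneuville, who treat $p=1/2$, and only remarks that the same strategy works for general $p$. Your argument supplies that extension explicitly. You differentiate the multilinear expression in the individual resampling probabilities and use the two-point identity
\[
(pa_1+(1-p)a_0)(pb_1+(1-p)b_0)-pa_1b_1-(1-p)a_0b_0=-p(1-p)(a_1-a_0)(b_1-b_0).
\]
This is the standard Russo-type computation. It produces the factor $p(1-p)$, the upper bound, and the equality case for increasing $\mathfrak f$, with $Q^p_\varepsilon(i \text{ is pivotal for } \mathfrak f)$ read, as the paper intends, as the probability that $i$ is pivotal in both configurations.

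The one place where you choose a specific tool is the lower bound. The $p$-biased Fourier identity $Q^p_\varepsilon(\mathfrak f)=\sum_S(1-\varepsilon)^{|S|}\hat{\mathfrak f}(S)^2$ holds for $p\in(0,1)$; at $p\in\{0,1\}$ your derivative formula already gives zero. This identity gives more than monotonicity. It shows that $Q^p_\varepsilon$ is convex in $\varepsilon$. Applied to $\mathbf 1_{C_1}$ at $p_\star$, where $\hat{\mathfrak f}(\emptyset)=p_\star$, it shows that $1-s(\varepsilon)$ is a polynomial in $1-\varepsilon$ with nonnegative coefficients summing to one. That gives the concavity $s''\le 0$ directly. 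The paper instead obtains it by ``repeated application'' of the proposition, namely the lower bound applied to the non-monotone functions $\mathbf 1\{i \text{ is pivotal}\}$.

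Conversely, you do not need Fourier analysis at all. It is true that $(a_1-a_0)(b_1-b_0)$ can be negative pointwise, but its expectation cannot. Conditionally on the set of resampled coordinates and the values of the non-resampled ones, $\omega_{-i}$ and $\omega^{\varepsilon}_{-i}$ are i.i.d. Hence $\mathbb E[(a_1-a_0)(b_1-b_0)]$ is the expectation of a squared conditional expectation. This keeps the whole proof elementary and even gives nonnegativity summand by summand.
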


\begin{remark}
  Although the proposition above is proved in~\cite{tv2023} only for the case $p = \frac{1}{2}$, the same proof strategy can be applied to different values of $p \in [0,1]$.
\end{remark}

We are now ready to prove Theorem~\ref{t:sharp_ns}.
\begin{proof}[Proof of Theorem~\ref{t:sharp_ns}]
Notice that, for every $k \geq 1$,
\begin{equation}\label{eq:recursive_ns}
  \mathbb{E}_{k} \big[ \textnormal{\textbf{1}}_{C_{k}}(\omega_{p_{\star}})
  \textnormal{\textbf{1}}_{C_{k}}(\omega_{p_{\star}}^{\varepsilon})\big]
  = \mathbb{E}_{k-1} \big[
  \textnormal{\textbf{1}}_{C_{k-1}}(\omega_{p_{\star}})
  \textnormal{\textbf{1}}_{C_{k-1}}(\omega_{p_{\star}}^{s(\varepsilon)})\big]
\end{equation}
where the sensitivity function $s(\varepsilon)$ is given by
\begin{equation}
  s(\varepsilon) = 1-\frac{\mathbb{E}_{1} \big[
    \textnormal{\textbf{1}}_{C_{1}}(\omega_{p_{\star}})
    \textnormal{\textbf{1}}_{C_{1}}(\omega_{p_{\star}}^{\varepsilon})\big]
    - p_{\star}^{2}}{p_{\star}-p_{\star}^{2}}.
\end{equation}
This follows from the recursive definition of the graphs $G_{k}$ and~\eqref{eq:recursive_crossing}.

Applying~\eqref{eq:recursive_ns} repeatedly yields
\begin{equation}
  \begin{split}
    \mathbb{E}_{k} \big[ \textnormal{\textbf{1}}_{C_{k}}(\omega_{p_{\star}})
    \textnormal{\textbf{1}}_{C_{k}}(\omega_{p_{\star}}^{\varepsilon_{k}})\big]
    & = \mathbb{E}_{0} \big[ \textnormal{\textbf{1}}_{C_{0}}(\omega_{p_{\star}})
      \textnormal{\textbf{1}}_{C_{0}} \big(
      \omega_{p_{\star}}^{s^{k}(\varepsilon_{k})} \big) \big] \\
    & = p_{\star}^{2}s^{k}(\varepsilon_{k})+p_{\star}(1-s^{k}(\varepsilon_{k})).
  \end{split}
\end{equation}

The result is thus complete if we verify that
\begin{equation}
  \lim_{k \to \infty} s^{k}(\varepsilon_{k}) = g(c).
\end{equation}

In order to do so, first observe that $s(0)=0$ and $s(1) = 1$. We will now check that these are the only fixed points of the function $s$.

Borrowing the notation from~\eqref{eq:operator_Q}, we can write
\begin{equation}
  s(\varepsilon) = 1 - \frac{Q^{p_{\star}}_{\varepsilon}
    (\textbf{1}_{C_{1}})-p_{\star}^{2}}{p_{\star}-p_{\star}^{2}}.
\end{equation}

Applying Proposition~\ref{prop:dynamic_margulis_russo} to the function $s$ yields
\begin{equation}
  s'(0) = \mathbb{E}_{p_{\star}} \big[ \#\{ \text{pivotal edges to $C_1$} \} \big]
  = \zeta.
\end{equation}
Furthermore, repeated application of Proposition~\ref{prop:dynamic_margulis_russo} implies that $s''(\varepsilon) \leq 0$, for all $\varepsilon \in [0,1]$.
This in particular implies that $s:[0,1] \to [0,1]$ is a non-decreasing concave polynomial whose only fixed points in the interval $[0,1]$ are the points $0$ and $1$.

We now apply Hartman-Grobman Theorem, the map $s$ can be conjugated to a linear map around the origin.
As per Appendix~\ref{app:auxiliary}, since $s$ has no fixed points in the open interval $(0,1)$, the conjugancy can be extended to the whole interval $[0,1)$, that is, there exists an homeomorphism $r:[0,1) \to [0,+\infty)$ such that
\begin{equation}
  s(\varepsilon) = r^{-1} \big( \zeta r(\varepsilon) \big),
\end{equation}
for all $\varepsilon \in [0,1)$.

Since $s$ is a polynomial, the map $r$ is differentiable at $0$ and can be chosen so that $r'(0)=1$ (see the comment after Proposition~\ref{p:diffeo_conjugation}).
First-order Taylor expansion for the function $r$ yields
\begin{equation}
  r(\varepsilon) = \varepsilon + O(\varepsilon^{2}).
\end{equation}

In particular, we obtain
\begin{equation}
  \lim_{k \to \infty} s^{k}(\varepsilon_{k})
  = \lim_{k \to \infty} r^{-1} \big( \zeta^{k} r(\varepsilon_{k}) \big)
  = \lim_{k \to \infty} r^{-1} \big( \zeta^{k} \varepsilon_{k}
  + \zeta^{k}O(\varepsilon_{k}^{2}) \big) = r^{-1}(c).
\end{equation}
This concludes the proof with $g = r^{-1}$.
\end{proof}

\section{The Benjamini-Schramm limit}
\label{sec:bensch_limit}

This section will be more technical in nature, and the proofs can be omitted without loss of clarity to the reader. The main result to be used in subsequent sections is Proposition~\ref{p:benjamini_schramm_marks}, about properties of the local limit of hierarchical graphs.

We say two locally finite, connected graphs endowed with distinguished edges $(G, e)$ and $(G', e')$ are equivalent if there exists a graph isomorphism between them mapping $e$ to $e'$.
Let $\mathcal{G}$ be the space of all connected, locally-finite, edge-rooted graphs, modulo this equivalence.
From now on, we refer to an edge-rooted graph $\bar{G} = (G, e)$ and its equivalence class in $\mathcal{G}$ indistinguishably.

Given $\bar{G} = (G, e) \in \mathcal{G}$ and $r \geq 0$, we write $\bar{G}\big|_r \in \mathcal{G}$ for the rotted graph induced by the ball of radius $r$ around the root edge $e$. That is, the vertices of $\bar{G}\big|_r$ are all vertices in $G$ within distance $r$ to any endpoint of $e$.

Given two edge-rooted graphs $\bar{G} = (G, e), \bar{G}' = (G', e') \in \mathcal{G}$, their distance is defined as
\begin{equation}
  \label{e:distance_G}
  \d(\bar{G}, \bar{G}') := \inf \Big\{
  \frac{1}{r + 1}: \, \bar{G}\big|_r \text{ is isomorphic to } \bar{G}'\big|_r
  \Big\}.
\end{equation}
It can be shown that $(\mathcal{G}, d)$ is a Polish metric space, see \cite{Curien2017}.

What follows is the notion of convergence introduced by Benjamini and Schramm, see \cite{benjamini2011recurrence}, but in this case we are considering a distinguished edge instead of a distinguished vertex.
We will need to define the empirical probability measure associated with a finite, connected graph $G = (V, \Edge)$:
\begin{equation}
  \label{e:empirical}
  \mu := \frac{1}{|\Edge|} \sum_{e \in \Edge} \delta_{(V, \Edge, e)},
\end{equation}
which is a probability measure in $\mathcal{G}$, obtained by rooting $G$ on a random edge uniformly selected from the set $\Edge$.

\begin{definition}
  \label{d:besh}
  A sequence of finite connected graphs $G_n = (V_n, \Edge_n)$ converges in the Benjamini-Schramm topology to a random element $\bar{G}_\infty = (V_\infty, \Edge_\infty, E_\infty) \in \mathcal{G}$ with distribution $\mu$ if $\mu_n$ converges weakly to $\mu$.
\end{definition}

Recall the definition of the hierarchical graphs $(G_k)$ in \Cref{sec:notation}.
We define $\bar{G}_k := (G_k, E_k)$ as the graph $G_k$ rooted on a uniformly selected edge $E_k \in \Edge_k$, forgetting the edge orientations, and denote by $\mu_k$ the distribution of $\bar{G}_{k}$.
In the following, we obtain the Benjamini-Schramm limit $(G_\infty, E_\infty)$ of the sequence $\bar{G}_{k}$, and analyze its local neighborhoods in terms of graphs of the form $G_k$.
To that end, recall the \emph{address} of an edge, with which we will mark each edge in the graph, that is, the bijections $A_k: \Edge_k \to \Edge^k$ defined in \Cref{sec:notation}. Abstractly, we will construct a space where we can mark arbitrary edges with addresses of arbitrary length, including infinity.

As we investigate hierarchical graphs, sometimes it is useful to analyze them by zooming out, from the smallest graphs to the largest ones, and sometimes the opposite (top-down view) is more fruitful.
The first approach is necessary in the local convergence context -- we need the smallest graphs in the hierarchical construction to be stable.
Apologizing in advance for the change of perspective, we consider the set of addresses indexed by negative integers, identifying $\Edge^k$ with $\Edge_1^{(-k, \dots, -1)}$, so that edges associate to the smaller graphs are closer to $0$. We let
\begin{equation}
  \label{e:address_def}
  \mathcal{A}
    :=
      \Edge_1^{\mathbb{Z}_{-}} \cup \bigcup_{k \geq 1} \Edge_1^{(-k, \dots, -1)}.
\end{equation}

There exists a natural notion of restriction in $\mathcal{A}$, namely, if $r > 0$ and $w \in \mathcal{A}$, we let $w \big|_r$ be the sequence composed by the \emph{last} $r$ elements in $w$, if the length of $w$ is at least $r$, or simply $w$ otherwise. That is, given
\begin{equation}
  \label{e:address_restriction_def}
  w = e_{-k} e_{-(k-1)}\dots e_{-1},
\end{equation}
we let
\begin{equation}
  \label{e:address_restriction_def2}
  w\big|_r = 
  \begin{cases}
    e_{-r} e_{-(r-1)} \dots e_{-1},  & \quad \text{if } k \geq r,
    \\
    w, & \quad \text{otherwise.}
  \end{cases}
\end{equation}
We can therefore define for $w, w' \in \mathcal{A}$, similarly to \Cref{e:distance_G},
\begin{equation}
  \label{e:distance_address}
  \d(w, w') := \inf \Big\{
    \frac{1}{r + 1}: \, w\big|_r = w'\big|_r
    \Big\}.
\end{equation}
Since $\Edge_1$ is a finite set, it is elementary to see that $\mathcal{A}$ with this metric is totally bounded and complete, hence compact.
For a given graph with edge set $\Edge$, $r > 0$, and a function defining the edge marks $\mathsf{m}: \Edge \to \mathcal{A}$, we can define the $r$-restriction of $\mathsf{m}$ simply by edge-wise mark restriction, that is,
\begin{equation}
  \label{e:mark_restriction}
  \mathsf{m}\big|_r(e) = \mathsf{m}(e)\big|_r
\end{equation}
for every $e \in \Edge$.
We then consider the space $\mathcal{G}_\mathcal{A}$ of connected edge-rooted graphs with marks in $\mathcal{A}$, that is, $(G, e, \mathsf{m}) \in \mathcal{G}_\mathcal{A}$ if $(G, e) \in \mathcal{G}$ and $\mathsf{m}: \Edge \to \mathcal{A}$.
This space also has a natural notion of local topology
\begin{equation}
  \label{e:distance_G_A}
  \begin{split}
    \d(&(G, e, \mathsf{m}) , (G', e', \mathsf{m}'))
    \\
      &:=
        \inf
          \Bigg\{
            \frac{1}{r+1} + \frac{1}{k+1}:
            \begin{array}{c}
            \text{ there exists a root-preserving isomorphism} \\
            \text{ between }
            (G, e)\big|_r \text{ and } (G', e')\big|_r
            \text{ preserving} \\
            \text{ the marks }
            \mathsf{m}\big|_k \text{ and } \mathsf{m}'\big|_k
            \end{array}
          \Bigg\}.
  \end{split}
\end{equation}

The following proposition shows a tightness criterion for this convergence.
\begin{proposition}
  \label{p:tightness}
  Consider a family of random edge-rooted graphs $(G^{\lambda}, E^{\lambda})_{\lambda \in \Lambda}$ in $\mathcal{G}$, and let $\deg(E^\lambda)$ denote the sum of degrees of endpoints of $E^{\lambda}$. This family is tight if and only if
  \begin{equation}
    \label{e:tightness}
    \deg(E^{\lambda}), \text{ for $\lambda \in \Lambda$},
  \end{equation}
  is tight. The same result also holds for any family of random edge-rooted graphs with marks in $\mathcal{A}$, $(G^{\lambda}, E^{\lambda}, \mathsf{m}^\lambda)_{\lambda \in \Lambda}$ in $\mathcal{G}_\mathcal{A}$.
\end{proposition}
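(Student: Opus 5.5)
The plan is to reduce both directions to the standard compactness description of $\mathcal{G}$ in the local topology \cref{e:distance_G}. For a sequence $M=(M_r)_{r\geq 0}$ of integers, let
\[
K_M := \big\{ (G,e) \in \mathcal{G} : \text{every vertex of } (G,e)\big|_r \text{ has degree at most } M_r, \text{ for all } r \geq 0 \big\}.
\]
Each $K_M$ is compact. For every $r$ there are only finitely many isomorphism classes of $r$-balls with degrees bounded by $M_r$, so a diagonal extraction gives, from any sequence in $K_M$, a subsequence whose $r$-balls stabilise for every $r$; the limit is defined as the projective limit of these balls. Conversely, every compact set is contained in some $K_M$: the map "maximal degree in the $r$-ball" is locally constant, hence continuous, so it is bounded on compacts. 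For the marked space $\mathcal{G}_\mathcal{A}$ the same description holds, because $\mathcal{A}$ is compact. Indeed, the marks restricted to depth $k$ take only $|\Edge_1|^{\leq k}$ values, so the extra diagonal extraction over $k$ costs nothing. The marked statement therefore reduces to the unmarked one verbatim.

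\emph{Necessity.} The map $(G,e)\mapsto \deg(e)$ depends only on the $1$-ball, so it is locally constant and hence continuous on $\mathcal{G}$ (and on $\mathcal{G}_\mathcal{A}$). A continuous image of a tight family is tight, which gives the forward implication.

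\emph{Sufficiency.} Given $\varepsilon>0$, it is enough to produce $M=(M_r)_r$ with $\mathbb{P}\big((G^\lambda,E^\lambda)\notin K_M\big)\leq \varepsilon$ uniformly in $\lambda$. I will build $M_r$ by induction on $r$, with error budget $\varepsilon 2^{-r-1}$ at step $r$. The case $r=0,1$ is exactly the hypothesis \cref{e:tightness}. For the inductive step I need to pass from "the $r$-ball has few vertices, each of bounded degree" to "the neighbours at distance $r+1$ also have bounded degree". On the event from step $r$, the $(r+1)$-ball has at most $N_r:=2M_r^{\,r+1}$ vertices.

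The key step is to transfer the root-degree bound to an edge at bounded distance from the root. I would try a re-rooting argument: compare the law of the family with the same graphs re-rooted at an edge adjacent to the root, and apply \cref{e:tightness} to the re-rooted law. For uniformly edge-rooted finite graphs, which is the setting where the proposition is used (the $\bar{G}_k$ and their marked versions), the re-rooting is a mass-transport identity. Concretely, it gives
\[
\mathbb{P}\big(\exists\, e' \sim E^\lambda : \deg(e')>M\big) \leq \mathbb{E}\big[\deg(E^\lambda)\,\mathbf{1}\{\deg(E^\lambda)>M\}\big],
\]
and iterating this $r$ times controls the $r$-ball.

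I expect this propagation step to be the main obstacle. Without some such stationarity the root-degree hypothesis alone does not constrain vertices far from the root, so this is precisely where the structure of the families under consideration must enter. The first thing I would try is to verify the uniform-integrability form of \cref{e:tightness} needed in the display above directly for $\bar{G}_k$. For these graphs it is immediate: by \cref{eq:decompose}, degrees are governed by the bounded-depth addresses, and a uniform edge sees degree $\geq \deg(a_1)^j$ with probability decaying geometrically in $j$. With that in hand, the induction closes and yields the compact set $K_M$.
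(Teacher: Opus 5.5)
Your reduction to the compact sets $K_M$, the necessity direction, and your handling of the marks are fine. You are also right that the statement cannot hold for arbitrary families: a root edge of bounded degree may sit next to a vertex of unbounded degree. The paper supplies the missing structure by citing Curien's tightness criterion for \emph{stationary} random graphs \cite{Curien2017}, applied to $\bar{G}_k$ with the root edge given a uniform orientation.

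The genuine gap is your propagation step. Your counting mass transport charges each bad edge $e'$ a factor $\deg(e')$, so it needs uniform integrability of $\deg(E^\lambda)$. This already fails for the diamond lattice of Figure~\ref{f:d_k}. There, the $2\cdot 4^{j-1}$ vertices created at level $j$ have degree $2^{k-j+1}$ in $G_k$, and $a_k,b_k$ have degree $2^k$. Using $\sum_e \deg(e)=\sum_v \deg(v)^2$,
\[
\mathbb{E}\big[\deg(E_k)\big] = 4^{-k}\Big(2\cdot 4^{k} + \sum_{j=1}^{k} 2\cdot 4^{j-1}\, 4^{k-j+1}\Big) = 2k+2 \longrightarrow \infty .
\]
The geometric decay you invoke is $\mathbb{P}(\deg(E_k)\geq 2^m)\asymp 2^{-m}$, and it is exactly compensated by the degree $2^m$. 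This gives tightness, but $\mathbb{E}[\deg(E_k)\mathbf{1}\{\deg(E_k)>K\}]$ is at least of order $k-\log_2 K$ for fixed $K$. So your induction does not close for the paper's main example, and for seeds such as $K_{2,n}$ with $n\geq 3$ even the means grow exponentially.

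The missing idea is to re-root with random-walk weights instead of counting weights, which is exactly where stationarity enters. Root the graph at a uniform \emph{directed} edge. The step $(x,y)\mapsto(y,z)$, with $z$ a uniform neighbour of $y$, preserves this law, because the uniform measure on directed edges is invariant. Let $\mathcal{B}_r$ be the event that every vertex in the $r$-ball has degree at most $M_r$. On $\mathcal{B}_r$, each edge with an endpoint in the $r$-ball is visited by the walk within $r+2$ steps with conditional probability at least $M_r^{-(r+2)}$. Every vertex the walk leaves along the way lies in the $r$-ball, possibly after one backtracking step. Hence
\[
\mathbb{P}\big(\mathcal{B}_r,\ \exists\, e' \text{ with an endpoint in the $r$-ball and } \deg(e')>K\big)
\leq (r+3)\, M_r^{\,r+2}\, \mathbb{P}\big(\deg(E)>K\big).
\]
This uses only tightness of the root degree, and it is the argument behind the criterion the paper invokes. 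With it your induction on $r$ closes. The marked case then follows from compactness of $\mathcal{A}$, as you say; the paper uses a compactness lemma from \cite{baldasso2022large} for this step.
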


\begin{proof}
  The first statement follows from the tighness criterion for stationary random graphs in Proposition 21 of~\cite{Curien2017}.
  These are graphs with one distinguished directed edge, and we can change from edge-rooted to directed-edge rooted by forgetting the orientation of the rooted edge. 

  Lemma $A.1$ from~\cite{baldasso2022large} shows that taking a compact set in $\mathcal{G}$ and considering all possible marks on graphs of this set yields a compact set in the local topology of $\mathcal{G}_{\mathcal{A}}$, since the collection of marks $\mathcal{A}$ is compact. This fact, together with the first statement, proves the second the result for marked edge-rooted graphs.
\end{proof}

For each edge $e \in \Edge_k$, we mark it with its (reversed) address $A_k(e)$, thus obtaining a function $\mathsf{m}_k$. Considering a uniformly chosen edge, together with these marks, we obtain a sequence of random edge-rooted graphs with marks $(G_k, E_k, \mathsf{m}_k)_{k}$. We overload the notation $(\mu_k)_k$ to also refer to the associated respective measures in these marked graphs. The following lemmas will be essential in the proof of the existence of the Benjamini-Schramm limit of this object.
Recall that, via the addresses $\Addr_k$, $E_k$ corresponds to a uniform word $W_k \in \Edge^{k}_{1}$ and so the same holds true for $\mathsf{m}_{k}(E_{k}) \in \mathcal{E}_{1}^{(-k,-k+1, \dots, 1)}$.

The following lemma presents recursive relations for the degrees of the vertices $a_{k}$ and $b_{k}$ of $G_{k}$.
\begin{lemma}\label{l:degree_ak_scaling}
  We have, for every $k \in \N$
  \begin{equation}\label{eq:scaling}
    \begin{pmatrix}
      \deg^{\mathrm{out}}_{k+1}(a_{k+1}) \\
      \deg^{\mathrm{in}}_{k+1}(a_{k+1})
    \end{pmatrix}
    =
    \begin{bmatrix}
      \deg^{\mathrm{out}}_{1}(a_{1}) & \deg^{\mathrm{out}}_{1}(b_{1}) \\
      \deg^{\mathrm{in}}_{1}(a_{1}) & \deg^{\mathrm{in}}_{1}(b_{1})
    \end{bmatrix}
    \begin{pmatrix}
      \deg^{\mathrm{out}}_{k}(a_{k}) \\
      \deg^{\mathrm{in}}_{k}(a_{k})
    \end{pmatrix}.
\end{equation}
  Furthermore, for every $k \in \N$,
  \begin{equation}\label{eq:scaling_degree}
    \deg_{k}(a_{k}) \leq \big( |\Edge_{1}| - 2 \big)^{k}.
  \end{equation}
  The same relations hold for the degrees of $b_{k}$.
\end{lemma}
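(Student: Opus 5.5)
The plan is to read off the recursion directly from the definition of $G_{k+1}$ as the graph obtained from $G_k$ by replacing every oriented edge by a copy of $G_1$. Since $a_{k+1}$ is the vertex that was $a_k$ before the surgery, every edge of $G_{k+1}$ incident to $a_{k+1}$ lies in a copy of $G_1$ that replaced an edge of $G_k$ incident to $a_k$. I would therefore split the edges of $G_k$ at $a_k$ into outgoing and incoming ones and look at what each type produces. This uses the ``replace edges of $G_k$'' description rather than the paving \cref{eq:decompose}. The paving would place $a_{k+1}$ at $a_1$ and lead to the transposed recursion, with the degrees of $b_k$ entering instead of the in-degree of $a_k$. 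That recursion does not match the displayed matrix, so I will avoid it.

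\emph{Outgoing edges.} If $(a_k, y)$ is an outgoing edge of $G_k$, it is replaced by a copy of $G_1$ with $a_1$ placed on $a_k$. This copy contributes exactly $\deg^{\mathrm{out}}_1(a_1)$ outgoing and $\deg^{\mathrm{in}}_1(a_1)$ incoming edges at $a_{k+1}$.

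\emph{Incoming edges.} If $(x, a_k)$ is an incoming edge, the copy of $G_1$ has $b_1$ placed on $a_k$. It contributes $\deg^{\mathrm{out}}_1(b_1)$ outgoing and $\deg^{\mathrm{in}}_1(b_1)$ incoming edges.

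Distinct edges of $G_k$ give edge-disjoint copies. Moreover, the only vertex of $G_{k+1}$ that a copy shares with the rest of the graph are the images of $a_1$ and $b_1$, and $a_1 \neq b_1$ since $\d_1(a_1,b_1) \geq 2$. Hence these contributions simply add up. This gives
\begin{equation*}
\deg^{\mathrm{out}}_{k+1}(a_{k+1}) = \deg^{\mathrm{out}}_1(a_1)\deg^{\mathrm{out}}_k(a_k) + \deg^{\mathrm{out}}_1(b_1)\deg^{\mathrm{in}}_k(a_k),
\end{equation*}
together with the analogous identity for the in-degree, which is exactly \cref{eq:scaling}. The same argument with $b_k$ in place of $a_k$ gives the relation for $b_k$.

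For \cref{eq:scaling_degree}, I would sum the two rows of \cref{eq:scaling}:
\begin{equation*}
\deg_{k+1}(a_{k+1}) = \deg_1(a_1)\,\deg^{\mathrm{out}}_k(a_k) + \deg_1(b_1)\,\deg^{\mathrm{in}}_k(a_k) \leq \big(\deg_1(a_1)\vee\deg_1(b_1)\big)\deg_k(a_k).
\end{equation*}
It then suffices to show $\deg_1(a_1)\vee\deg_1(b_1) \leq |\Edge_1|-2$ and to induct on $k$, starting from $k=1$, where the bound is this same inequality. To prove the inequality I use \cref{e:hypothesis}. Since $\d_1(a_1,b_1)\geq 2$, no edge is incident to both $a_1$ and $b_1$, so $\deg_1(a_1)+\deg_1(b_1)\leq |\Edge_1|$. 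The two edge-disjoint $a_1$–$b_1$ paths end in two distinct edges at $b_1$, so $\deg_1(b_1)\geq 2$. Together these give $\deg_1(a_1)\leq |\Edge_1|-2$, and the symmetric argument bounds $\deg_1(b_1)$.

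The only delicate point I expect is the bookkeeping in the first step. One must make sure that no edge is double-counted and that the orientation convention (which of $a_1,b_1$ sits on $a_k$) is applied correctly. If $G_1$ is allowed to contain loops at $a_1$ or $b_1$, these should be counted in both in- and out-degree, and the argument is unchanged.
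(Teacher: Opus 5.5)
Your proof is correct and follows the paper's argument essentially step for step. You derive the recursion by noting that each outgoing, respectively incoming, edge at $a_k$ is replaced by a copy of $G_1$ with $a_1$, respectively $b_1$, placed at $a_k$. You then get the bound by summing the rows and using $\deg_1(a_1)+\deg_1(b_1)\le|\Edge_1|$ together with $\deg_1(b_1)\ge 2$ from \eqref{e:hypothesis}, and conclude by induction on $k$.
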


\begin{proof}
  Let us start with a verification of~\eqref{eq:scaling}.
  Notice that, in the construction of $G_{k+1}$, every oriented edge of $G_{k}$ is substituted by a copy of $G_{1}$, following the correct orientation.
  This implies that the edges going out from $a_{k+1}$ in $G_{k+1}$ are of two types: they either originate from an edge going out of $a_{k}$ in $G_{k}$ and an edge going out of $a_{1}$ in $G_{1}$, or they originate from an edge going into $a_{k}$ in $G_{k}$ and an edge going out of $b_{1}$ in $G_{1}$.
  This gives the relation
  \begin{equation}
  \deg^{\mathrm{out}}_{k+1}(a_{k+1}) = \deg^{\mathrm{out}}_{1}(a_{1}) \deg^{\mathrm{out}}_{k}(a_{k}) + \deg^{\mathrm{out}}_{1}(b_{1}) \deg^{\mathrm{in}}_{k}(a_{k}).
  \end{equation}
  This implies the first relation in~\eqref{eq:scaling}.
  The verification of the recursive relation for the in-degrees are analogous.

  Let us now prove~\eqref{eq:scaling_degree}.
  The proof of this fact follows an inductive argument, the base case being $\deg_{1}(a_{1}) \leq |\Edge_{1}| - 2$, since the distance between $a_1$ and $b_1$ is at least two, and $\deg_{1}(b_{1}) \geq \cut(G_{1}) \geq 2$.
  The same reasoning implies $\deg_{1}(b_{1}) \leq |\Edge_{1}| - 2$.
  Suppose~\eqref{eq:scaling_degree} holds for $k \in \N$ and notice that~\eqref{eq:scaling} implies
  \begin{equation}
    \begin{split}
      \deg_{k+1}(a_{k+1}) & = \deg^{\mathrm{out}}_{k+1}(a_{k+1}) + \deg^{\mathrm{in}}_{k+1}(a_{k+1}) \\
      & = \deg_{1}(a_{1}) \deg^{\mathrm{out}}_{k}(a_{k}) + \deg_{1}(b_{1}) \deg^{\mathrm{in}}_{k}(a_{k}) \\
      & \leq \big( |\Edge_{1}| - 2 \big) \deg_{k}(a_{k}) \\
      & \leq \big( |\Edge_{1}| - 2 \big)^{k+1}.
    \end{split}
  \end{equation}
  An analogous calculation also yields the same bound for $\deg_{k+1}(b_{k+1})$ and concludes the proof.
\end{proof}

The next lemma states that a uniformly selected edge in $G_{k}$ is typically far away from $a_{k}$ and $b_{k}$.
\begin{lemma}
  \label{l:dist_from_ek_to_ak}
  Assume $G$ satisfies condition~\eqref{e:hypothesis}. Then, for every $r > 0$, defining
  \begin{equation}
    \label{e:dist_from_ek_to_ak_1}
    k_0(r) = \frac{\log r}{\log \d_1(a_1, b_1)},
  \end{equation}
  we have
  \begin{equation}
    \label{e:dist_from_ek_to_ak_2}
    \mu_k
      \big(
        \min \{ \d_{k}(E_k, a_k), \d_{k}(E_k, b_k) \} < r
      \big)
        \leq
        2
        \Bigg(
          \frac{|\Edge_{1}| - 2}{|\Edge_{1}|}
        \Bigg)^{k - k_0(r)}
  \end{equation}
\end{lemma}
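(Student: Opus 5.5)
The plan is to use the hierarchical decomposition \eqref{eq:decompose}: the address $W_k$ of the uniform edge $E_k$ is a uniform word in $\Edge_1^k$, so its letters $e_1,\dots,e_k$ are i.i.d.\ uniform on $\Edge_1$. Fix $j = \lceil k_0(r)\rceil$ and write $\ell = k - j$. The idea is that if the first $\ell$ letters (top levels) avoid the edges of $G_1$ incident to $a_1$ or $b_1$, then $E_k$ lies in a copy $G_k^w$ of $G_j$ that is far from $a_k$ and $b_k$.

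\textbf{Step 1 (distance lemma).} Define a word $w\in\Edge_1^{\ell}$ to be \emph{interior} if, for every $i \le \ell$, the letter $e_i$ is not adjacent to $a_1$ or $b_1$. I claim that if $W_k$ has an interior prefix of length $i$, then $\d_k(E_k,\{a_k,b_k\}) \ge \d_1(a_1,b_1)^{k-i}$. The key fact is that in $G_m$ one has $\d_m(a_m,b_m)=\d_1(a_1,b_1)^m$, by induction: a geodesic in $G_{m+1}$ must traverse a chain of copies $G_m^e$, each crossing costing $\d_m(a_m,b_m)$. More generally, distances between vertices of the skeleton $G_{m}$ scale by $\d_1(a_1,b_1)$ when passing to $G_{m+1}$. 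So if $e_1$ is not adjacent to $a_1$ or $b_1$, the subgraph $G_k^{e_1}$ is attached to $G_k$ at two vertices $x,y$, each at distance at least $1$ (in $G_1$) from $\{a_1,b_1\}$. Hence they are at distance at least $\d_1(a_1,b_1)^{k-1}$ from $\{a_k,b_k\}$ in $G_k$. Any path from $E_k\subset G_k^{e_1}$ to $a_k$ must exit through $x$ or $y$, so the distance is at least $\d_1^{k-1}$. I only need one interior letter at some level $i\le \ell$: then $E_k$ lies in $G_k^{e_1\cdots e_i}$, whose attachment points are skeleton vertices of $G_i$ at $G_i$-distance at least $1$ from $a_i,b_i$. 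This requires checking that the endpoints of $e_i$ are not $a_i,b_i$ when $e_i$ avoids $a_1,b_1$, which is immediate. The resulting distance is at least $\d_1(a_1,b_1)^{k-i}\ge \d_1^{j}\ge r$.

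\textbf{Step 2 (probability).} The bad event is therefore contained in the event that none of $e_1,\dots,e_\ell$ is interior, that is, every one of these letters touches $a_1$ or $b_1$. Each letter independently touches $a_1$ or $b_1$ with probability $(\deg_1(a_1)+\deg_1(b_1))/|\Edge_1|$. This ratio is not obviously at most $(|\Edge_1|-2)/|\Edge_1|$. Since \eqref{e:hypothesis} forces $\d_1\ge2$, every $a_1$–$b_1$ path contains at least one edge not incident to either endpoint... but if $\d_1=2$, a path $a_1$–$v$–$b_1$ uses only incident edges. So I instead split the bound. Either use a union over the two endpoints: $\{\d_k(E_k,a_k)<r\}\cup\{\d_k(E_k,b_k)<r\}$, where it suffices that each letter avoids the edges incident to $a_1$ alone. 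This has probability at most $1-\deg_1(a_1)/|\Edge_1|\le (|\Edge_1|-2)/|\Edge_1|$, using $\deg_1(a_1)\ge\cut(G_1)\ge2$. Steps 1 and 2 rerun per endpoint give the distance from $a_k$ only, and the factor $2$ comes from the union bound.

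The main obstacle is Step 1, namely rigorously establishing $\d_k(\partial G_k^{w}, a_k)\ge \d_1(a_1,b_1)^{k-|w|}$ when some letter of $w$ avoids edges at $a_1$. I would prove by induction on $m$ that for any vertex $v$ of the skeleton $G_m\subset G_{m+n}$, $\d_{m+n}(v,a_{m+n})=\d_1^{n}\,\d_m(v,a_m)$. The argument for this is that a path between skeleton vertices decomposes into excursions inside the copies $G_n^{e}$. An excursion that returns to the same attachment vertex is wasteful, and one that crosses a copy costs at least $\d_n(a_n,b_n)=\d_1^n$. This also handles the ambiguity of non-consistent orientations, since the argument is symmetric in $a,b$.
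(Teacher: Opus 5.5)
\textbf{There is a genuine gap in Step~2, exactly where the paper does the real work.} Your architecture is the paper's, and your skeleton-distance induction is fine. You locate $E_k$ in the block $G_k^{e_1\cdots e_\ell}$, a copy of $G_j$ with $\ell=k-j$. You use $\d_k(a_k,b_k)=\d_1(a_1,b_1)^k$ to see that $\d_k(E_k,a_k)<r$ forces this block to be attached at $a_k$, i.e.\ forces the edge $e_1\cdots e_\ell$ of $G_\ell$ to be incident to $a_\ell$. You then need the probability of that event.

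As you noticed, the ``interior letter'' criterion is useless in general: for the diamond every edge of $G_1$ touches $a_1$ or $b_1$, so your per-letter probability is $1$. The repair does not work as written, for two reasons.

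\emph{The per-endpoint version of Step~1 is false.} The claim ``if some $e_i$, $i\le\ell$, avoids the edges incident to $a_1$, then $E_k$ is far from $a_k$'' fails when the orientation of $G_1$ is not consistent, which the paper explicitly allows. If $e=(v,a_1)$ points into $a_1$, the copy of $G_1$ placed on $e$ has its $b_1$ glued at $a_2$. So for any $e'$ incident to $b_1$ (hence not to $a_1$), the edge of $G_2$ with address $ee'$ is incident to $a_2$. The block $G_k^{ee'}$ is then attached at $a_k$, although the letter $e'$ avoids $a_1$. The correct letter-wise description is that $e_1\cdots e_\ell$ is incident to $a_\ell$ iff every $e_i$ is incident to a specific vertex $x_i\in\{a_1,b_1\}$. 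Here $x_i$ is the vertex glued at $a_{i-1}$ in the copy of $G_1$ on the edge $e_1\cdots e_{i-1}$ of $G_{i-1}$; it depends on that edge's orientation and may alternate.

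\emph{The probability is taken of the wrong event.} The quantity you bound, $1-\deg_1(a_1)/|\Edge_1|$, is the probability that a letter \emph{avoids} $a_1$, i.e.\ of the good event. Its $\ell$-th power does not control the bad event. Also, $\deg_1(a_1)\ge 2$ is not the inequality that is needed.

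\textbf{The fix.} Conditionally on $e_1,\dots,e_{i-1}$, the probability that $e_i$ is incident to $x_i$ is $\deg_1(x_i)/|\Edge_1|$. Both $\deg_1(a_1)$ and $\deg_1(b_1)$ are at most $|\Edge_1|-2$, because $a_1,b_1$ are not adjacent and the \emph{other} distinguished vertex has degree at least $\cut(G_1)\ge 2$. Multiplying these conditional probabilities gives $\mu_k(\d_k(E_k,a_k)<r)\le\big((|\Edge_1|-2)/|\Edge_1|\big)^{\ell}$. The union bound over $\{a_k,b_k\}$ then gives the factor $2$.

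This is the paper's argument in another language. The probability that the block containing $E_k$ is attached at $a_k$ is exactly $\deg_\ell(a_\ell)/|\Edge_1|^\ell$. The paper bounds $\deg_\ell(a_\ell)\le(|\Edge_1|-2)^\ell$ through the in/out-degree recursion of Lemma~\ref{l:degree_ak_scaling}. That recursion is precisely the bookkeeping of whether $a_1$ or $b_1$ sits at $a_\ell$, which your Step~2 is missing.
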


\begin{proof}
  A minimal path connecting $a_k$ to $b_k$ in $G_k$ is obtained by recursively concatenating minimal paths in each copy of $G_{k-1}$ used in the construction. Consequently, one has
  \begin{equation}
    \label{e:dist_ak_bk}
    \d_{k}(a_k,b_k)= \d_{1}(a_1,b_1)^k.
  \end{equation}
  By the above, we know that for $k_{0} = k_0(r)$ defined as in \Cref{e:dist_from_ek_to_ak_1}, we have $\d_{k_0}(a_{k_0}, b_{k_0}) \geq r$. Then, for $k > k_0$, in order for $E_k$ to be at distance at most $r$ from $a_k$, $E_k$ must be contained inside a graph of the collection $\{ G^{w}_k; \, w \in \Edge_{1}^{k - k_0} \}$ incident to $a_k$ (that is, a graph of the $k_0$-th scale contained in $G_k$, see Equation~\eqref{eq:decompose}).
  The same is true for $b_k$. The number of such graphs is then
  \begin{equation}
    \label{e:dist_from_ek_to_ak_3}
      \deg_{k-k_0}(a_{k - k_0}) + \deg_{k-k_0}(b_{k - k_0}).
  \end{equation}
  At the same time, the total number of subgraphs of scale $k_{0}$ contained in $G_k$ is $|\Edge_{1}|^{k - k_0}$.
  Therefore, by \cref{l:degree_ak_scaling}, the probability of a uniformly chosen edge being in a graph of the collection incident to either $a_k$ or $b_k$ is upper bounded by
  \begin{equation}
    \label{e:dist_from_ek_to_ak_4}
      \frac{
        \deg_{k-k_0}(a_{k - k_0})
        +
        \deg_{k-k_0}(b_{k - k_0})
      }{
        |\Edge_{1}|^{k - k_0}
      }
      \leq
      2
      \Bigg(
        \frac{|\Edge_{1}| - 2}{|\Edge_{1}|}
      \Bigg)^{k - k_0},
  \end{equation}
  proving the result.
\end{proof}

In what follows, for $j \leq k$ and $\omega \in \Edge_{1}^{k-j}$, we let $a_{k}^w$ and $b_{k}^w$ denote the vertices of $G_{k}^w$ corresponding to $a_{j}$ and $b_{j}$, respectively.

\begin{lemma}
  \label{l:tight_Gk_ek}
  The family $(\deg(E_k))_k$ is tight.
  Therefore, $(\bar{G}_k)_k \in \mathcal{G}$ and $(G_k, E_k, \mathsf{m}_k)_k \in \mathcal{G}_\mathcal{A}$ are tight sequences in $\mathcal{G}$ and $\mathcal{G}_{\mathcal{A}}$, respectively.
\end{lemma}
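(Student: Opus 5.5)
By Proposition~\ref{p:tightness}, it suffices to show that the real-valued family $(\deg(E_k))_k$ is tight. Tightness of $(\bar G_k)_k$ in $\mathcal{G}$ and of $(G_k,E_k,\mathsf{m}_k)_k$ in $\mathcal{G}_{\mathcal{A}}$ then follows immediately. The plan is to decompose according to the scale at which the root edge first touches a vertex of high degree.

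The key structural observation concerns the vertices of $G_k$. Every vertex of $G_k$ is either a copy of an interior vertex $v\in V_1\setminus\{a_1,b_1\}$ created at some level, or one of the endpoints $a_k,b_k$. A vertex created at level $j$ is one of $a^w_k,b^w_k$ for some $w$ of length $k-j$. It is interior to a copy of $G_j$ and is obtained from the boundary vertices of subgraphs $G_{j-1}$. Its degree equals $\sum_{e\ni v}\deg_{j-1}(a_{j-1}\text{ or }b_{j-1})$, which by Lemma~\ref{l:degree_ak_scaling} is at most $D(|\Edge_1|-2)^{j-1}$ with $D=\max_v\deg_1(v)$. Hence an endpoint of $E_k$ can have degree larger than $D(|\Edge_1|-2)^{j-1}$ only if $E_k$ lies in one of the subgraphs $G_k^w$, with $|w|=k-j'$ for some $j'\geq j$, that is incident to that vertex at its own boundary. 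In other words, $E_k$ must lie inside a copy of $G_{j-1}$ that touches one of its own boundary vertices $a,b$ at that coarser scale.

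Next, bound the probability of this event using the counting from the proof of Lemma~\ref{l:dist_from_ek_to_ak}. Within a copy of $G_m$, the fraction of edges that are incident to $a_m$ or $b_m$ is at most $(\deg_m(a_m)+\deg_m(b_m))/|\Edge_1|^m\leq 2\big((|\Edge_1|-2)/|\Edge_1|\big)^m$. Consider the event $\{\deg(E_k)>2D(|\Edge_1|-2)^{j}\}$. It is contained in the union, over $m\geq j$, of the events that the edge $E_k$ is adjacent to a boundary vertex of the level-$m$ subgraph containing it. Since $W_k$ is a uniform word, the sub-address at each scale is uniform, so each of these events has probability at most $2\big((|\Edge_1|-2)/|\Edge_1|\big)^m$. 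Summing the geometric series gives
\[
\mu_k\big(\deg(E_k)>2D(|\Edge_1|-2)^{j}\big)\leq C\Big(\tfrac{|\Edge_1|-2}{|\Edge_1|}\Big)^{j}
\]
uniformly in $k$. This tends to $0$ as $j\to\infty$, which gives tightness.

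The main obstacle is making the first step precise. One has to show cleanly that a high-degree endpoint forces adjacency to the boundary of some intermediate-scale block containing $E_k$, and handle the orientation bookkeeping, which is covered by Lemma~\ref{l:degree_ak_scaling}. For this step I would use the nested decomposition \eqref{eq:decompose} together with the fact that a vertex is a boundary vertex of exactly those blocks of scale below its creation level. The sum over scales is routine.
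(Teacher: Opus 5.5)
Your proposal is correct and is essentially the paper's argument: in both, a large $\deg(E_k)$ forces $E_k$ to be incident to a boundary vertex of the block of a fixed scale that contains it. That event has probability independent of $k$, because the last letters of the address are uniform, and the counting of Lemma~\ref{l:dist_from_ek_to_ak} makes it small.

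The paper simply fixes one scale $k_1$ and takes $M$ to be the maximum degree of $G_{k_1}$. It uses that an interior vertex of a block has all its edges inside that block. This makes your creation-level degree bookkeeping and the union over $m\geq j$ unnecessary, since one scale already suffices. Your version does give, in exchange, an explicit power-law tail for $\deg(E_k)$.

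There is also a small index slip: a vertex created at level $j$ is some $a^w_k$ or $b^w_k$ with $|w|=k-j+1$, not $k-j$.
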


\begin{proof}
  Given $\eps > 0$, we use \Cref{l:dist_from_ek_to_ak} to find $k_1$ large enough such that with probability $1 - \eps$, $E_{k_1}$ is not incident to either $a_{k_1}$ or $b_{k_1}$. Let $M$ be the maximum degree of $G_{k_1}$. We sample $E_k$ in $G_k$ in the following manner: we first sample a word $w \in \Edge_{1}^{k-k_1}$ uniformly, thus obtaining a graph of the form $G^{w}_k$ of the $k_1$-th scale contained in $G_k$, and then sample a word $w' \in \Edge_{1}^{k_1}$ again uniformly, thus obtaining a uniform word in $\Edge_{1}^k$, and equivalently, a uniform edge of $\Edge_k$. Note that the only way for $\deg(E_k) > 2M$ is if in the second step above, the uniformly chosen edge within $G_{k}^w$ is adjacent to either endvertex $a_k^w$ or $b_k^w$ of this graph of the $k_1$-th scale. But this has probability at most $\eps$, by construction, finishing the proof of the first statement. Tightness for $(\bar{G}_k)_k \in \mathcal{G}$ and $(G_k, E_k, \mathsf{m}_k)_k \in \mathcal{G}_\mathcal{A}$ now follows directly from Proposition~\ref{p:tightness}.
\end{proof}

\begin{lemma}
  \label{l:cauchy_Gk}
  Given a fixed connected edge-rooted graph $(H, e)$, under condition \eqref{e:hypothesis},
  \begin{equation}
    \label{e:cauchy_Gk_1}
    \mu_k
      \big(
        B_{k}(E_k, r) = (H, e)
      \big)
  \end{equation}
  is a Cauchy sequence.
  
  Furthermore, given a connected edge-rooted marked graph $(H,e,\mathsf{m}) \in \mathcal{G}_{\mathcal{A}}$, and $\varepsilon>0$,
  \begin{equation}
    \label{e:cauchy_Gk_2}
    \mu_k
      \big(
        \d( (G_{k}, E_{k}, \mathsf{m}_{k} ) , (H, e, \mathsf{m})) \leq \varepsilon
      \big)
  \end{equation}
  is a Cauchy sequence.
\end{lemma}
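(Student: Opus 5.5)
The plan is a coupling across scales. Fix $r$ and $\varepsilon>0$; in the marked case also fix a mark-restriction depth $\ell$, so that the event $\d(\cdot,(H,e,\mathsf{m}))\le\varepsilon$ depends only on the ball of some radius $r$ and on the marks restricted to their last $\ell$ letters. I will show that for $k\ge m\ge j$, with $j$ chosen large, both $\mu_k$ and $\mu_m$ of the event differ by at most $2\delta$ from the same quantity computed at scale $j$. This quantity is an average, over a uniform edge of $G_j$, of an indicator determined by the $r$-neighborhood inside a single copy of $G_j$.

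As in the proof of \cref{l:tight_Gk_ek}, I sample $E_k$ in two stages. First take a uniform word $w\in\Edge_1^{k-j}$, which selects the copy $G_k^w\cong G_j$. Then take a uniform word $w'\in\Edge_1^{j}$, which gives a uniform edge of $G_k^w$ and hence a uniform edge of $G_k$. The key observation is that $G_k^w$ touches the rest of $G_k$ only through its two endvertices $a_k^w,b_k^w$. So if $E_k$ lies at distance at least $r+1$ from $\{a_k^w,b_k^w\}$ inside $G_k^w$, then every path of length at most $r$ from an endpoint of $E_k$ stays inside $G_k^w$. Distances below $r$ are computed inside $G_k^w$ as well, because any path leaving the copy must pass through $a_k^w$ or $b_k^w$. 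Hence $B_k(E_k,r)$ is isomorphic, as a rooted graph, to $B_j(E_j',r)$, where $E_j'$ is the edge of $G_j$ with address $w'$, taken through the canonical isomorphism $G_k^w\cong G_j$ given by addresses. This isomorphism preserves the last $j$ letters of every address. So when $j\ge \ell$, the restricted marks $\mathsf{m}_k|_\ell$ agree with $\mathsf{m}_j|_\ell$ on the ball, and the marked event also transfers.

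It follows that
\[
\Big|\mu_k\big(B_k(E_k,r)=(H,e)\big)-\mu_j\big(B_j(E_j,r)=(H,e)\big)\Big|\le \mu_j\big(\min\{\d_j(E_j,a_j),\d_j(E_j,b_j)\}\le r\big),
\]
and the right-hand side is at most $2\big((|\Edge_1|-2)/|\Edge_1|\big)^{j-k_0(r+1)}$ by \cref{l:dist_from_ek_to_ak}. This bound tends to $0$ as $j\to\infty$, uniformly in $k\ge j$. Comparing $k$ and $m$ through the common scale $j$ then gives the Cauchy property for both \eqref{e:cauchy_Gk_1} and \eqref{e:cauchy_Gk_2}.

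The main obstacle is checking that the ball computed in $G_k$ really coincides with the ball computed in $G_j$ on the good event. This includes the orientation-forgetting convention and the requirement that the isomorphism preserve both the root and the marks. A related difficulty is that the event in \eqref{e:cauchy_Gk_2} must be reduced to a finite-depth, finite-radius condition. I will handle the reduction by choosing $r$ and $\ell$ with $\frac{1}{r+1}+\frac{1}{\ell+1}$ bracketing $\varepsilon$, noting that the infimum in \eqref{e:distance_G_A} is attained over a discrete set of values.
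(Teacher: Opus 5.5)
Your proposal is correct and follows the paper's proof. Both sample $E_k$ in two stages through a common scale, note that on the event that $E_k$ is far from the endpoints of its copy $G_k^w$ the ball (and its restricted marks) coincides with the corresponding one in $G_j$, and bound the complementary event with \cref{l:dist_from_ek_to_ak}; you are more explicit than the paper about why $G_k^w$ is isolated and why the last $j$ address letters are preserved. The one loose point is your marked-case reduction: the values $\frac{1}{r+1}+\frac{1}{\ell+1}$ do not form a discrete set, and what really makes the event a finite union of radius/depth conditions is that for large $k$ only finitely many pairs are admissible, since $G_k$ is finite and its marks have length exactly $k$ — though the paper omits the marked case altogether.
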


\begin{proof}
  Let $r$ be the diameter of $H$, that is the maximum distance between its vertices. Let $A_{k}$ be the event where $E_{k}$ is at distance larger than $r$ from the endpoints $a_{k}$ and $b_{k}$. Using Lemma~\ref{l:dist_from_ek_to_ak}, given $\eps > 0$, we can find $k_0$ large enough such that
  \begin{equation}
    \label{e:cauchy_Gk_3}
    \mu_{k_0} \big( A_{k_0} \big) \geq 1 - \eps.
  \end{equation}
  For $k > k_0$, we sample $E_k$ in $G_k$ in two steps as in the previous lemma.
  First uniformly sample a word $\omega \in \mathcal{E}_{1}^{k - k_0}$ and look at the graph $G^{\omega}_{k}$.
  Then select a word $\omega' \in \mathcal{E}_{1}^{k_{0}}$, which yields a uniformly selected edge $G^{\omega}_{k}$, and thus a uniformly selected edge in $G_{k}$.
  Let $A_{k_0, k}$ be the event where $E_k$ is at distance larger than $r$ from the endpoints $a^{\omega}_{k}$ and $b^{\omega}_{k}$ of the graph $G^{\omega}_{k}$.
  By definition of the measures, we have
  \begin{equation}
    \label{e:cauchy_Gk_4}
    \begin{split}
      \mu_{k}
        \big(
          B_{k}(E_k, r) = (H, e), A_{k_{0},k}
        \big)
      &=
      \mu_{k_0}
        \big(
          B_{k_0}(E_{k_0}, r) = (H, e), A_{k_0}
        \big)
      \\
      \mu_{k}
        \big(
          A_{k_{0}, k}
        \big)
      &=
      \mu_{k_0}
        \big(
          A_{k_0}
        \big).
    \end{split}
  \end{equation}
  The relations above immediately implies that, for $k, k' > k_0$,
  \begin{equation}
    \label{e:cauchy_Gk_5}
    \Big|
      \mu_k
      \big(
        B_{k}(E_k, r) = (H, e)
      \big)
      -
      \mu_{k'}
      \big(
        B_{k'}(E_{k'}, r) = (H, e)
      \big)
    \Big|
      <
        2\eps,
  \end{equation}
  concluding the proof of the first statement. The second follows the same lines and we omit it here.
\end{proof}

The lemmas above then imply the existence of the Benjamini Schramm limit of the sequence $(G_k, E_k)$. We summarize this result in what follows.

\begin{proposition}
  \label{p:benjamini_schramm}
  The sequence $\bar{G}_k = (G_{k}, E_{k})$ converges in the sense of \Cref{d:besh} to $\mu_\infty$, the law of a random edge-rooted graph $\bar{G}_\infty = (G_\infty, E_\infty) \in \mathcal{G}$. An analogous statement holds for marked graphs: the sequence $(G_{k}, E_{k}, \mathsf{m}_{k})$ converges in the sense of \Cref{d:besh} to a random edge-rooted marked graph $(G_\infty, E_\infty, \mathsf{m}_{\infty}) \in \mathcal{G}_{\mathcal{A}}$.
\end{proposition}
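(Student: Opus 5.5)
The plan is the standard route: tightness plus convergence of finite-dimensional data, i.e.\ of the probabilities of basic open sets. By \cref{l:tight_Gk_ek}, the sequences $(\mu_k)_k$ on $\mathcal{G}$ and on $\mathcal{G}_\mathcal{A}$ are tight. Prokhorov's theorem therefore gives subsequential weak limits, and it suffices to show that any two subsequential limits coincide.

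To identify the limit, I will use the fact that on $\mathcal{G}$ a probability measure is determined by its values on the balls $\{\bar G : \bar G|_r \simeq (H,e)\}$, ranging over finite edge-rooted graphs $(H,e)$ and $r \geq 0$. These sets are clopen in the local topology and form a countable $\pi$-system generating the Borel $\sigma$-algebra. Since they are clopen, their boundaries are empty, so weak convergence along a subsequence implies convergence of their measures. By \cref{l:cauchy_Gk}, the sequence $\mu_k(B_k(E_k,r) = (H,e))$ is Cauchy, hence convergent. Every subsequential limit therefore assigns the same mass to every set of the $\pi$-system. By the $\pi$--$\lambda$ theorem all subsequential limits agree, and the full sequence converges weakly to a single law $\mu_\infty$. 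Finally, $\mu_\infty$ is supported on connected, locally finite graphs because $\mathcal{G}$ is Polish and tightness keeps the mass inside $\mathcal{G}$. This yields $\bar G_\infty = (G_\infty, E_\infty)$.

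For the marked version I run the same argument in $\mathcal{G}_\mathcal{A}$. Here the relevant clopen sets are the balls $\{\d(\cdot,(H,e,\mathsf m)) \le \varepsilon\}$, or equivalently the sets where the $r$-ball together with the $k$-restricted marks $\mathsf m|_k$ is isomorphic to a given finite marked graph. Since $\mathsf m|_k$ takes values in the finite set of words of length at most $k$, there are countably many such sets and they form a generating $\pi$-system. The second part of \cref{l:cauchy_Gk} gives convergence of their probabilities, and tightness is the second part of \cref{l:tight_Gk_ek}. Uniqueness of the limit follows as before.

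The main obstacle is checking that these cylinder sets are genuinely clopen and form a generating $\pi$-system in the marked space $\mathcal{G}_\mathcal{A}$. The delicate point is that marks of varying, possibly infinite, length are truncated by $|_k$, and the metric \eqref{e:distance_G_A} mixes the graph radius $r$ and the mark depth $k$. I would handle this by noting that the condition ``isomorphic up to radius $r$ with marks agreeing up to depth $k$'' depends only on finitely many discrete data. Such sets are therefore both open and closed, and intersecting two of them gives a finite union of sets of the same kind at the larger $(r,k)$. Consistency of the limiting marginals, namely that forgetting the marks recovers $\mu_\infty$, follows by the continuous mapping theorem applied to the mark-forgetting projection.
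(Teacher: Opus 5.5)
Your proposal is correct and follows the paper's argument: tightness from \cref{l:tight_Gk_ek}, together with the Cauchy property of \cref{l:cauchy_Gk}, forces all subsequential limits to coincide, and you make explicit the Portmanteau and $\pi$--$\lambda$ steps that the paper leaves implicit. One small imprecision is calling the closed metric balls of \eqref{e:distance_G_A} ``equivalent'' to the $(r,k)$-cylinders (a closed ball is in general a union of cylinders at several pairs $(r,k)$), but this is harmless: work with the cylinders directly, for which the coupling in the proof of \cref{l:cauchy_Gk} gives the Cauchy property as soon as $k_0$ is at least the mark depth.
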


\begin{proof}
Tightness follows from Lemma~\ref{l:tight_Gk_ek}. By Lemma~\ref{l:cauchy_Gk}, every convergent subsequence of $(G_k, E_k)_k$ must have the same infinite limit, proving the first result. The case of marked graphs is analogous.
\end{proof}

Having established the existence of the Benjamini-Schramm limits, we now turn our attention to understanding the local neighborhoods of $E_\infty$ in $G_\infty$ in terms of graphs of the form $G_k$. The main result we need is one allowing us to tile $G_\infty$ with edge disjoint copies of $G_k$ for every $k$, this tiling being compatible for different values of $k$.

We consider the right-shift $\sigma : \Edge_{1}^{\Z_-} \to \Edge_{1}^{\Z_-}$ over infinite words.

\begin{proposition}
  \label{p:benjamini_schramm_marks}
  The distribution $\bar{G}_\infty = (G_\infty, E_\infty) \in \mathcal{G}$ has the following properties:
  \begin{enumerate}[\quad a)]
    \item The address of the edge root $E_\infty$ is an infinite i.i.d.\ sequence of random variables $(e_j)_{j \in \Z_-}$, each $e_j$ being chosen uniformly in $\Edge_1$.
    \item For each $k$, $G_\infty$ is can be tiled by copies of $G_k$. That is, $G_\infty$ is an edge-disjoint union of subgraphs $G_k^w$ isomorphic to $G_k$, where $w \in \sigma^k(\Edge_{1}^{\Z_-})$. Furthermore, tilings for different levels are compatible, in the sense that, for $k > k'$, $G_k^w$ is tiled by graphs of the form $G_{k'}^{w'}$, with $w' \in \sigma^{k'}(\Edge_{1}^{\Z_-})$.
    \item $E_\infty$ is contained in a nested sequence of graphs
    \begin{equation}
      G_1^{w_1} \subset G_2^{w_2} \subset \dots \subset G_k^{w_k} \subset \dots ,
    \end{equation}
    each $G_k^{w_k}$ being an element of a tiling in the above item, moreover $w_k = \sigma^{k}(\dots, e_{-1}, e_0) = (\dots, e_{-(k + 1)}, e_{-k})$.
  \end{enumerate}
\end{proposition}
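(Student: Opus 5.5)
The plan is to read all three properties off the marked limit $(G_\infty, E_\infty, \mathsf{m}_\infty)$ given by Proposition~\ref{p:benjamini_schramm}. In this limit each edge carries an address in $\mathcal{A}$, and every statement is verified at a finite radius $r$ and a finite mark depth, then transferred to the limit by weak convergence.

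\textbf{Item a).} Under $\mu_k$, the mark $\mathsf{m}_k(E_k)$ is a uniform word of length $k$, read right to left as $e_{-k}\dots e_{-1}$. Hence for each fixed $j$ the restriction $\mathsf{m}_k(E_k)\big|_j$ is uniform on $\Edge_1^{j}$ once $k \geq j$. The map $(G,e,\mathsf{m}) \mapsto \mathsf{m}(e)\big|_j$ is continuous (indeed locally constant) on $\mathcal{G}_\mathcal{A}$, so weak convergence shows that $\mathsf{m}_\infty(E_\infty)\big|_j$ is uniform on $\Edge_1^j$ for every $j$. Lemma~\ref{l:dist_from_ek_to_ak} shows that the root's address length tends to infinity in probability, since short addresses would force $E_k$ to be near $a_k$ or $b_k$. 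Therefore $\mathsf{m}_\infty(E_\infty) \in \Edge_1^{\Z_-}$ almost surely, and its finite-dimensional marginals are those of an i.i.d.\ uniform sequence.

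\textbf{Items b) and c).} Fix $k$. In $G_K$ with $K \geq k$, two edges belong to the same tile $G_K^w$ with $w \in \Edge_1^{K-k}$ exactly when their addresses agree except possibly in the last $k$ letters. After reversal, this says that the marks coincide outside coordinates $-k,\dots,-1$. I define the tiles of $G_\infty$ by the same rule, using $\sigma^k$ of the infinite marks. This requires every edge of $G_\infty$ to have an infinite mark, which I obtain by applying the argument of item a) together with unimodularity, or more concretely by noting that each edge within distance $r$ of the root also lies far from the corners $a_K, b_K$ with high probability.

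The key finite-radius claim is the following. Fix $r$ and $\varepsilon$, and choose $k_0$ as in the proof of Lemma~\ref{l:cauchy_Gk}. Then, with probability at least $1-\varepsilon$, the ball $B_K(E_K, r)$ has the property that, for every edge in it, its level-$k$ tile, for each $k \leq k_0$, is a full copy of $G_k$ with the correct marks, and distinct tiles meet only at their corner vertices. This claim is a closed condition on $(G,e,\mathsf{m})\big|_{r}$ with marks restricted to depth $k_0$, so it passes to the limit. Letting $r \to \infty$ and $k_0 \to \infty$ then gives the tilings of item b) and their nesting. Nesting already holds at every finite level, because $G_K^w$ is tiled by the $G_K^{ww'}$.

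Item c) is the special case of the tiles containing $E_\infty$. The tile $G_k^{w_k}$ is indexed by $\sigma^k$ of the root address, and these tiles are increasing in $k$.

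\textbf{Main obstacle.} The hard step is showing that the tile $G_k^{w_k}$ really is isomorphic to $G_k$ inside the limit, and not merely locally like it. The difficulty is that a tile is a finite graph of diameter about $\d_1(a_1,b_1)^k$, so it becomes visible only in a large ball, and its corner vertices may have degrees that grow with the ambient scale. To handle this, I plan to use Lemma~\ref{l:dist_from_ek_to_ak} at scale $k+m$: with probability at least $1 - 2((|\Edge_1|-2)/|\Edge_1|)^{m}$, the level-$k$ tile of the root is contained in the interior of a level-$(k+m)$ tile, at distance larger than its own diameter from that tile's corners. On this event, a ball of fixed radius determines the whole tile, so isomorphism with $G_k$ is a local event, and its probability tends to $1$ as $m \to \infty$.
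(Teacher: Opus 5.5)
Your item a) and your definition of the tiles via $\sigma^k\circ\mathsf{m}$ match the paper's. Your proposal is correct in strategy, but it takes a genuinely different route at the one substantive step: showing that each tile is a copy of $G_k$.

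The paper argues by renormalization. Substituting every edge by a copy of $G_1$, re-rooting uniformly inside the copy of the root, and appending a letter to each mark sends $\mu_k$ to $\mu_{k+1}$. Hence the law of $(G_\infty,E_\infty,\mathsf{m}_\infty)$ is invariant under this operation, and an induction on $k$ transfers the tile structure (built into any substituted graph) back to the limit. Uniqueness of addresses is proved separately.

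You instead push the tiling of $G_K$, which holds with probability one, directly through Portmanteau. You use Lemma~\ref{l:dist_from_ek_to_ak} at scale $k+m$ to make it a clopen event up to an error of order $((|\Edge_1|-2)/|\Edge_1|)^{m}$. What each route buys:
\begin{itemize}
\item Yours is more elementary and quantitative. It does not require showing that substitution is well defined and continuous on $\mathcal{G}_\mathcal{A}$ (note that $\mathcal{G}$ forgets orientations, which substitution needs).
\item Yours gives a marked isomorphism, from which uniqueness of addresses follows with no separate argument.
\item The paper's route exhibits the distributional self-similarity of the limit, which is what is invoked later, e.g.\ for the recursion satisfied by $y(p)$ in Section~\ref{s:exponent_relations}.
\end{itemize}

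Three steps of your write-up need repair.

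\emph{The key finite-radius claim.} As stated, it is not a condition on $B(E,r)$ with marks truncated at depth $k_0$. At that depth, level-$k_0$ tiles are indistinguishable. Tiles of edges near the boundary of $B(E,r)$ leave the ball. And whether the ball sits inside a single level-$k_0$ tile is not visible at depth $k_0$. Use instead the event from your last paragraph. Let $D_k=\mathrm{diam}(G_k)$. The event is that the edges of $B(E,D_k+1)$ whose marks agree with $\mathsf{m}(E)$ in positions $-(k+m),\dots,-(k+1)$ form a copy of $G_k$ with the correct last $k$ letters. Under $\mu_K$ this holds whenever $E_K$ is at distance larger than $D_k+1$ from the corners of its level-$(k+m)$ tile, because that tile meets the rest of $G_K$ only at its corners.

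\emph{The passage to the limit in $m$.} Portmanteau gives this event probability at least $1-\epsilon_m$ in the limit, where $\epsilon_m\to0$ is the bound from Lemma~\ref{l:dist_from_ek_to_ak}; it does not give probability one. Your sentence ``its probability tends to $1$ as $m\to\infty$'' conflates the truncated events with the single event concerning the true tile. To close the gap:
\begin{itemize}
\item The truncated sets decrease in $m$. The ball is finite, so they stabilize at the true tile intersected with the ball.
\item It therefore suffices that the event holds for infinitely many $m$, which has probability one since $\epsilon_m\to0$.
\item An edge at distance in $(D_k+1,R]$ carrying the same infinite $\sigma^k$-mark lies in the corresponding truncated event for every $m$. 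That event has probability at most the chance that $E_K$ is within $R$ of the corners of its level-$(k+m)$ tile, which tends to zero, so such edges a.s.\ do not exist.
\end{itemize}

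\emph{Address lengths.} The appeal to Lemma~\ref{l:dist_from_ek_to_ak} here is spurious, since every address in $G_K$ has length exactly $K$. Infinite length of the root's mark already follows from the uniformity of $\mathsf{m}(E_\infty)|_j$ on $\Edge_1^j$. For all other edges, use the clopen event that every mark in $B(E,r)$ has length at least $j$; it has $\mu_K$-probability one once $K\ge j$.
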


\begin{proof}
As established in Proposition~\ref{p:benjamini_schramm}, the sequence of marked graphs $(G_{k}, E_{k}, \mathsf{m}_{k})_{k}$ converges in the Benjamini-Schramm sense to the marked graph $(G_{\infty}, E_{\infty}, \mathsf{m})$.

Let us first argue that the addresses for different edges in $(G_{\infty}, E_{\infty}, \mathsf{m})$ are different almost surely.

Recall, for $r \geq 1$, the restriction map $\omega \mapsto \omega|_{r}$ defined in~\eqref{e:address_restriction_def2} and notice that it is continuous and extends to a continuous map defined in $\mathcal{G}_{\mathcal{A}}$.

For $r \geq 1$, define the open set
\begin{equation}
\mathcal{O}_{r} = \bigg\{
  \begin{array}{c}
    (G, E, \mathsf{m}) \in \mathcal{G}_{\mathcal{A}} : \text{ there exist distinct edges } e_{1}, e_{2} \in G \\
    \text{ such that } \d_{G} (e_{1}, E) \leq r, \d_{G}(e_{2}, E) \leq r, \text{ and } \mathsf{m}(e_{1})|_{2r} = \mathsf{m}(e_{2})|_{2r}
  \end{array}
\bigg\},
\end{equation}
and notice that Portmanteu's Theorem yields
\begin{equation}
\mu_{\infty} \big( (G_{\infty}, E_{\infty}, \mathsf{m}) \in \mathcal{O}_{r} \big) \leq \liminf \mu_{k} \big( (G_{k}, E_{k}, \mathsf{m}_{k})_{k} \in \mathcal{O}_{r} \big).
\end{equation}
Below we argue that the last probability equals zero, due to the fact that, in $(G_{k}, E_{k}, \mathsf{m}_{k})_{k}$, all addresses are uniquely determined. In fact, if two distinct edges in $(G_{k}, E_{k}, \mathsf{m}_{k})$ are at distance at most $2r$ from each other, the restriction of their address to the first $2r$ entries must differ. Taking the limit as $r$ grows implies that any two distinct edges in $(G_{\infty}, E_{\infty}, \mathsf{m})$ have different addresses.

Let us now turn our attention to the proof of~\textit{a}). For any fixed $r \geq 1$, the Continuous Mapping Theorem implies that the sequence $(G_{k}, E_{k}, \mathsf{m}_{k}|_{r})_{k}$ converges to the marked graph $(G_{\infty}, E_{\infty}, \mathsf{m}|_{r})$. Notice that the finite-size address $\mathsf{m}_{k}(E_{k})_{r}$ form an i.i.d.\ sequence taken uniformly from $\Edge_{1}^{r}$. From the Benjamini-Schramm convergence, we obtain that $\mathsf{m}(E_{\infty})|_{r}$ is distributed uniformly in $\Edge_{1}^{r}$, implying the first statement.

In order to define the tilling, given $\omega \in \sigma^{k}(\Edge_{1}^{\Z_{-}})$, we define $G_{k}^{\omega}$ as the subgraph induced by the collection of edges $e \in G_{\infty}$ such that $\sigma^{k} \circ \mathsf{m}(e) = \omega$. Immediately from the definition we obtain that the graphs $G_{k}^{\omega}$ and $G_{k}^{\omega'}$ are edge-disjoint for $\omega \neq \omega'$ (here we use that the address is almost surely unique) and that the tillings of $G_{\infty}$ is consistent for different values of $k$.

The only thing left to verify is that $G_{k}^{\omega}$ is isomorphic to $G_{k}$. This follows from and inductive argument based on the fact that the distribution of $(G_{\infty}, E_{k}, \mathsf{m}_{\infty})$ is preserved by the operation of substituting each edge of $G_{\infty}$ by a copy of $G_{1}$ (as in the construction of the graphs $G_{k}$), selecting a root uniformly in the graph corresponding to the edge $E_{\infty}$ and updating the marks accordingly. This invariance is proved by observing that the same holds true for the sequence $(G_{k}, E_{k}, \mathsf{m}_{k})$, where this operation gives origin to the random graph $(G_{k+1}, E_{k+1}, \mathsf{m}_{k+1})$.

Item \textit{c} in the statement follows trivially from item \textit{b} and concludes the proof.
\end{proof}

\section{Percolation on the Benjamini-Schramm limit}
\label{sec:bensch_perc}

In Section~\ref{sec:notation}, we introduced Bernoulli percolation on the graphs $G_{k}$ (see Equation~\eqref{eq:percolation_k}). We start this section by extending this notion in an analogous way to the limiting random graph $\bar{G}_{\infty} = (V_{\infty}, \Edge_{\infty})$.

Consider an independent family of uniform random variables $(U_{e})_{e \in \Edge_{\infty}}$ and assume that this family is also independent of the realization of $\bar{G}_{\infty}$. Given $p \in [0,1]$, define the percolation configuration as
\begin{equation}\label{eq:configuration_coupling}
\omega_{p}(e) = \textbf{1}_{U_{e} \leq p}, \quad \text{for } e \in \Edge_{\infty}.
\end{equation}
Fixed a realization of the graph $\bar{G}_{\infty}$, let $\PP^{\bar{G}_{\infty}}_{p}$ denote the distribution of the random configuration $\omega_{p}$ on $\{0,1\}^{\Edge_{\infty}}$.

\begin{remark}
Notice one can also construct the random graph $\bar{G}_{\infty}$ together with the configuration $\omega_{p}$ as the Benjamini-Schramm limit of the edge-marked graphs $(G_{k}, \omega_{p})$, where $\omega_{p}$ in defined in $G_{k}$ as in~\eqref{eq:percolation_k}.
\end{remark}

As usual, for any two sets $A, B \subset V_{\infty}$, we denote by $[A \longleftrightarrow B]$ the event that a vertex $a \in A$ is connected to a vertex $b \in B$ by an open path. We also write
\begin{equation}
[A \longleftrightarrow \infty] = \bigcup_{a \in A} \bigcap_{r \geq 1} \big[ \{a\} \longleftrightarrow B_{\infty}(a, r)^{c} \big],
\end{equation}
for the event where there exists an infinite open path starting from some vertex in $A$.

Given a realization $\bar{G}_\infty = (G_\infty, E_\infty)$ and $p \in [0,1]$, define the random variable $\theta^{\bar{G}_\infty}(p) = \PP^{\bar{G}_\infty}_p[ E_\infty \leftrightarrow \infty ]$ (note that this function depends on the realization of the graph $G_\infty$ as well as on the root edge $E_{\infty}$). Define also the (in principle random) percolation threshold
\begin{equation}
  \label{eq:p_c}
  p_c(\bar{G}_\infty) = \sup \big\{ p \in [0, 1]: \theta^{\bar{G}_\infty}(p) = 0 \big\}.
\end{equation}
The next result rules out the discomforting possibility that $p_c$ is random.
\begin{theorem}
  \label{t:p_c_constant}
  Under~\eqref{e:hypothesis}, the critical value $p_c(\bar{G}_\infty)$ is a.s. constant under the law of $\bar{G}_\infty$ and it coincides with the box-crossing threshold $p_\star = p_{\star}(G_{1})$.
\end{theorem}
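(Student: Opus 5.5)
We prove two inequalities: $\theta^{\bar G_\infty}(p)=0$ almost surely for every $p<p_\star$, and $\theta^{\bar G_\infty}(p)>0$ almost surely for every $p>p_\star$. Together with monotonicity of $\theta$ in $p$ (via the coupling \eqref{eq:configuration_coupling}), these give $p_c(\bar G_\infty)=p_\star$ almost surely, so $p_c$ is a.s. constant. Throughout we use the nested tiling of Proposition~\ref{p:benjamini_schramm_marks}: $E_\infty\subset G_1^{w_1}\subset G_2^{w_2}\subset\cdots$, with each $G_k^{w_k}$ isomorphic to $G_k$ and with distinguished vertices $a^{w_k}_k,b^{w_k}_k$.

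\emph{Subcritical side.} The key structural fact is that $G_k^{w_k}$ is attached to the rest of $G_\infty$ only through $\{a_k^{w_k},b_k^{w_k}\}$. This holds because $G_k^{w_k}$ is a single edge of the level-$k$ "coarse graph" after replacement, exactly as in $G_{k+j}$. Hence, if $E_\infty\leftrightarrow\infty$, then for every $k$ the root is connected inside $G_k^{w_k}$ to $\{a_k^{w_k},b_k^{w_k}\}$.

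So it suffices to show that $\mathbb P_p^{k}(E\leftrightarrow\{a_k,b_k\})\to0$ for every fixed edge $E$ of $G_k$, with $k$ large and $E$ the image of the root. Choose $j$ with $k-j$ large, and look at the copy $G_k^{u}\cong G_j$ containing $E$ at level $j$. Any path from $E$ to $\{a_k,b_k\}$ must leave this copy through its endpoints. It must then cross, at every coarser level $j'\in(j,k]$, the copy at level $j'$ containing $E$: from its interior to its boundary, passing through at least one full crossing of a sub-copy isomorphic to $G_{j'-1}$.

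A cleaner route is the following. By (H), $\d_1(a_1,b_1)\ge2$, so the edges of $G_1$ adjacent to $a_1$ are disjoint from those adjacent to $b_1$. Exiting a level-$(m+1)$ copy from a sub-copy therefore requires fully crossing at least one sub-copy of level $m$ that is adjacent to an endpoint. Iterating, the event $\{E\leftrightarrow \{a_k,b_k\}\}$ implies that, at level $m=k-1$, one of the at most $|\Edge_1|$ level-$m$ copies is crossed. A union bound then gives probability at most $|\Edge_1| f_{k-1}(p)$. This must be done carefully, since the path could avoid a full crossing if $E$ lies in a copy adjacent to an endpoint. Because the address digits are i.i.d. uniform (Proposition~\ref{p:benjamini_schramm_marks}a), almost surely infinitely many levels $m$ have $e_{-m}$ not adjacent to $\{a_1,b_1\}$. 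At each such level the connection forces a full crossing of a sibling copy of level $m-1$, which has probability $f_{m-1}(p)$. Since $f_{m-1}(p)\to0$ by Theorem~\ref{t:crossing}c, we get $\theta^{\bar G_\infty}(p)=0$ almost surely.

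\emph{Supercritical side.} For $p>p_\star$ we have $f_k(p)\to1$, and Theorem~\ref{th:off-critical-decay} gives the quantitative bound $1-f_k(p)\le \exp(-c\,\cut(G_1)^k)$ for $k$ large, which is summable. Consider the nested copies $G_k^{w_k}$ and the events $D_k=\{a^{w_k}_k\leftrightarrow b^{w_k}_k \text{ inside } G_k^{w_k}\}$. Every crossing of $G_{k+1}^{w_{k+1}}$ uses vertices of every level. If $D_k$ holds for all $k\ge K$, the crossing paths of successive levels intersect, because the copy $G_k^{w_k}$ is one "edge" of $G_1$ inside $G_{k+1}^{w_{k+1}}$. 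The intersection argument runs as follows. A crossing of $G_{k+1}^{w_{k+1}}$ passes through endpoints of level-$k$ copies. Require in addition that every level-$k$ sub-copy of $G_{k+1}^{w_{k+1}}$ is crossed; this has probability at least $1-|\Edge_1|(1-f_k(p))$. Then the endpoints of $G_k^{w_k}$ are connected to those of $G_{k+1}^{w_{k+1}}$, so $a^{w_K}_K$ is joined to vertices at graph distance at least $\d_1(a_1,b_1)^k$ for every $k\ge K$, hence lies in an infinite cluster. By Borel–Cantelli these events hold for all large $k$ with positive probability, and by independence across levels we can force $D$ at level $K$ and a connection from $E_\infty$ to $a^{w_K}_K$ inside $G_K^{w_K}$ with positive (finite-energy) probability. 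Hence $\theta>0$ almost surely.

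The main obstacle is the subcritical containment argument: making rigorous that reaching the boundary of a large copy forces a full crossing of some comparable sub-copy, uniformly in the position of the root. I would first try the approach sketched above, using the i.i.d. address digits together with $\d_1(a_1,b_1)\ge2$.
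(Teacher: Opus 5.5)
Your supercritical half follows the paper's argument. You require all level-$k$ sub-copies of $G_{k+1}^{w_{k+1}}$ to be crossed for every $k\ge K$, use summability of $1-f_k(p)$ from Theorem~\ref{th:off-critical-decay}, and then open the root copy. The only slip is ``independence across levels'': the level-$k$ event contains the crossing of $G_k^{w_k}$, which involves edges of the lower-level events. All these events are increasing, though, so Harris--FKG (as in the paper) or your monotone modification still gives the positive lower bound.

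The genuine gap is in the subcritical half. Your key claim is that almost surely infinitely many address digits $e_{-m}$ are edges of $G_1$ not adjacent to $\{a_1,b_1\}$. This needs such an edge to exist, and hypothesis~\eqref{e:hypothesis} does not provide one. In the diamond of Figure~\ref{f:d_k}, every edge of $G_1$ touches $a_1$ or $b_1$. There the event you rely on has probability zero at every level. The root copy always shares an endpoint with the boundary of its parent copy, so a path can reach that boundary without crossing any sibling. As written, you have not shown $\theta^{\bar{G}_\infty}(p)=0$ for $p<p_\star$, even for the paper's main example.

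Your inward strategy can be repaired by using disjoint blocks of two consecutive digits. Such a pair addresses an edge of $G_2$, and $G_2$ always contains an edge with no endpoint in $\{a_2,b_2\}$. To see this, pick a neighbour $x$ of $a_1$, and in the copy of $G_1$ placed on the edge $\{a_1,x\}$ take an edge incident to $x$. Because $\mathrm{d}_1(a_1,b_1)\ge 2$, its other endpoint is an interior vertex of that copy, and $x\notin\{a_1,b_1\}$.

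The blocks are i.i.d., so almost surely infinitely many are good. On a good block, reaching the boundary of $G_{m+2}^{w_{m+2}}$ forces the crossing of one of the fewer than $|\mathcal{E}_1|^2$ level-$m$ siblings of the root copy. Hence $\theta^{\bar{G}_\infty}(p)\le |\mathcal{E}_1|^2 f_m(p)\to 0$ along good levels. Compare the use of levels $2\ell+1$ and $2\ell+3$ in Remark~\ref{rmk:theta_p_c}.

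The paper avoids this issue by looking outward instead: $E_\infty\leftrightarrow\infty$ forces a crossing of one of the level-$\ell$ copies neighbouring the root copy. It then combines a union bound with the tightness of the number of such neighbours (Lemma~\ref{l:tight_Gk_ek}) and the decay $f_\ell(p)\le c\,e^{-c'\mathrm{d}_1(a_1,b_1)^\ell}$. The repaired inward argument gives a deterministic bound on every good level and needs no control on degrees, but it relies on the i.i.d.\ address of Proposition~\ref{p:benjamini_schramm_marks}. The paper's route needs the degree tightness but no information about the position of the root inside its copies.
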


\begin{proof}
  It is easy to see that $p_c$ does not depend on the edge $E_\infty$ by Harris-FKG's inequality, but we now prove that it also does not depend on the randomness given by $G_\infty$.
  We start by showing that $p_c(\bar{G}_\infty) \geq p_\star$ and for this we fix $p < p_\star$.

  Recall the tilling $\{G^{w}_{\ell}: w \in \sigma^{\ell}(\Edge_{1}^{\Z_{-}})\}$ of $G_{\infty}$ from Proposition~\ref{p:benjamini_schramm_marks}, and denote by $\tilde{\omega}$ the unique word associated with the root edge $E_\infty$.
  Notice that
  \begin{equation}
    [E_\infty \leftrightarrow \infty] \subseteq
    \Big[ \text{one of $G_\ell^{w}$ neighboring $G_\ell^{\sigma^{\ell}(\tilde{w})}$ is crossed} \Big],
  \end{equation}
  so that
  \begin{equation}
    \begin{split}
      \PP_{p}^{\bar{G}_\infty} (E_\infty \leftrightarrow \infty)
      & \leq \# \{ G_\ell^{w} \text{ neighboring } G_\ell^{\sigma^{\ell}(\tilde{w})} \}
      \mathbb{P}^{\ell}_{p} (G_\ell \text{ is crossed}) \\
      & \leq \# \{ G_\ell^{w} \text{ neighboring } G_\ell^{\sigma^{\ell}(\tilde{w})} \}
      c(p) \exp \Big\{ -c'(p) \d_{1}(a_1, b_1)^\ell \Big\},
    \end{split}
  \end{equation}
  where the last estimate follows directly from Theorem~\ref{th:off-critical-decay}.
  
  Therefore, according to Lemma~\ref{l:tight_Gk_ek}, the (non-negative) random variable $\theta^{\bar{G}_\infty}$ is bounded by a product between a tight family of random variables and a sequence that goes to zero, and thus it must equal zero almost surely.
  This proves that $p_c(\bar{G}_\infty) \geq p_\star$.

  We now turn to the complementary inequality and fix $p > p_\star$.
  Observe that,
  \begin{equation}
    \label{eq:l_argument}
    \bigcap_{\ell \geq 1} \bigcap_{G_\ell^w \subseteq G_{\ell + 1}^{\sigma^{\ell+1}(\tilde{\omega})}}
    \big[ G_\ell^w \text{ is crossed} \big]
    \subseteq [E_\infty \leftrightarrow \infty ].
  \end{equation}
  Therefore, by \cref{eq:off-critical-decay-super} and Harris-FKG's inequality,
   \begin{equation}
    \label{eq:lower_bound_percolate}
    \PP_{p}^{\bar{G}_\infty}(E_\infty \leftrightarrow \infty)
    \geq p^{|\Edge_{\ell_{0}}|} \prod_{\ell \geq \ell_0 + 1} \Big(
    1 - c(p) \exp\{ -c'(p) \cut(G_{1})^\ell \}
    \Big)^{|\mathcal{E}|} > 0.
   \end{equation}
  Therefore, the random variable $\theta^{\bar{G}_\infty}$ is uniformly bounded from below as we vary the marked graph $\bar{G}_\infty$.
  In particular, $p_c(\bar{G}_{\infty}) \leq p_\star$, finishing the proof of the lemma.
\end{proof}

\subsection{Uniqueness and continuity}\label{sec:uniq_cont}

Given a realization $\bar{G}_{\infty} = (G_{\infty}, E_{\infty})$ and $\big(\omega_{p}(e)\big)_{e \in G_{\infty}}$, denote by $N$ the number of infinite open connected components.
\begin{theorem}[Uniqueness of the infinite cluster]\label{t:uniqueness}
For all $p > p_{c}$,
\begin{equation}
\PP_{p}^{\bar{G}_{\infty}} \big( N = 1 \big) = 1, \quad \bar{G}_{\infty}-a.s.
\end{equation}
\end{theorem}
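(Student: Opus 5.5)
We plan to show that for $p > p_\star = p_c$ every infinite cluster must meet the core of the nested tiles $G_\ell^{w_\ell}$ from Proposition~\ref{p:benjamini_schramm_marks}, and that inside these tiles all large clusters are almost surely merged. The key input is the surface tension estimate \eqref{eq:off-critical-decay-super} of Theorem~\ref{th:off-critical-decay}: for $p>p_\star$, $1-f_\ell(p) \leq c(p)\exp\{-c'(p)\cut(G_1)^\ell\}$, which decays super-exponentially in $\ell$ since $\cut(G_1)\geq 2$.

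\textbf{Step 1 (all tiles crossed at high levels).} Fix the realization of $\bar G_\infty$ and the nested sequence $G_1^{w_1}\subset G_2^{w_2}\subset\cdots$ containing $E_\infty$. The tile $G_{\ell+1}^{w_{\ell+1}}$ is the union of $|\Edge_1|$ tiles of level $\ell$. Let $D_\ell$ be the event that all these level-$\ell$ subtiles are crossed. By a union bound, $\PP_p(D_\ell^c)\leq |\Edge_1|\, c(p)e^{-c'(p)\cut(G_1)^\ell}$, which is summable in $\ell$. By Borel--Cantelli, almost surely there is a random $L$ such that $D_\ell$ occurs for all $\ell\geq L$. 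On $\bigcap_{\ell\geq L}D_\ell$, the graph $G_1$ is connected, so $a^{w_{\ell+1}}$ and $b^{w_{\ell+1}}$ lie in a single open cluster together with the endpoints of all level-$\ell$ subtiles. Inductively, the endpoints of all tiles $G_\ell^{w_\ell}$ for $\ell\geq L$ belong to one common open cluster $\mathcal{C}^\ast$, and this cluster is infinite.

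\textbf{Step 2 (any infinite cluster meets $\mathcal{C}^\ast$).} Since $G_\infty=\bigcup_\ell G_\ell^{w_\ell}$ (every edge is at finite distance from $E_\infty$ and tiles exhaust distance), any infinite cluster $\mathcal{C}$ contains a vertex $v$ in some $G_m^{w_m}$ with $m\geq L$. Because $\mathcal{C}$ is infinite and $G_m^{w_m}$ is finite, $\mathcal{C}$ must exit $G_m^{w_m}$. A tile touches the rest of $G_\infty$ only through its two endpoints $a^{w_m},b^{w_m}$, which follows from the substitution construction. Hence $\mathcal{C}$ contains one of these endpoints, so $\mathcal{C}=\mathcal{C}^\ast$. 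This gives $N\leq 1$. Theorem~\ref{t:p_c_constant} gives $N\geq1$ with positive probability, and a tail or ergodicity argument, or more directly Step 1 itself since $\mathcal{C}^\ast$ is infinite whenever $L<\infty$, gives $N=1$ almost surely.

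\textbf{Main obstacle.} The delicate point is Step 2: one must verify rigorously in the limiting object that each tile $G_\ell^{w}$ is attached to its complement only through its two distinguished vertices, and that $\bigcup_\ell G_\ell^{w_\ell}=G_\infty$, that is, that the nested tiles do not stay inside a half-infinite region missing part of the graph. For the first property, we would transfer it from $G_k$ via the marked Benjamini–Schramm convergence, since it is a local property that holds deterministically in every $G_k$. For the second, we would use that $\d(E_\infty, \text{endpoints of } G_\ell^{w_\ell})\to\infty$ almost surely, by Lemma~\ref{l:dist_from_ek_to_ak} and Borel–Cantelli. It then follows that any fixed vertex is eventually enclosed by the tiles.
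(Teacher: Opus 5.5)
Your argument is correct, but its probabilistic mechanism differs from the paper's. The paper never uses the surface-tension bound. Since $G_1$ has two edge-disjoint $a_1$--$b_1$ paths, at every level $\ell$ one of them avoids the subtile containing $G_{\ell-1}$. Hence the event that $a_\ell\leftrightarrow b_\ell$ inside $G_\ell\setminus G_{\ell-1}$ has probability bounded below by a fixed power of $p_\star$, using only $f_{\ell-1}(p)\ge p_\star$. These annulus events are independent over $\ell$, so the second Borel--Cantelli lemma makes them occur infinitely often. Two infinite clusters that meet a common tile are each forced through $\{a_\ell,b_\ell\}$, so they merge at any such level.

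You instead use summability of $1-f_\ell(p)$ and the first Borel--Cantelli lemma to get that \emph{eventually all} subtiles along the nested chain are crossed. This builds an explicit infinite backbone $\mathcal{C}^\ast$ that every infinite cluster must join. Your route buys two things. No independence is needed, and existence ($N\ge 1$ a.s.) comes for free, whereas the paper's proof only shows that two infinite clusters coincide and leaves a.s.\ existence to $\theta>0$ plus a $0$--$1$ law. The paper's route needs only crossing probabilities bounded away from zero rather than a quantitative decay rate, and it uses the two-path part of \eqref{e:hypothesis} directly.

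The structural facts you flag (tiles attached to the rest only at their endpoints, and $\bigcup_\ell G_\ell^{w_\ell}=G_\infty$) are used silently by the paper too, e.g.\ when it chooses $\ell_0$ so that one level-$\ell_0$ tile meets both clusters. Your justification via Lemma~\ref{l:dist_from_ek_to_ak} works. Borel--Cantelli is not even needed there: the distance from $E_\infty$ to the endpoints of $G_\ell^{w_\ell}$ is nondecreasing in $\ell$, and its probability of being below $r$ tends to zero.
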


\begin{remark}
Contrary to the Burton-Keane finite-energy argument, we here prove that any two infinite clusters intersect with probability one without resorting to local changes of the configuration, but actually relying on an application of Borel-Cantelli Lemma to ensure the existence of crossings of graphs in different scales. In particular, this surpasses the geometric argument based on trifurcation points used to rule out the possibility that infinitely many disjoint infinite open clusters happen at once.
\end{remark}

\begin{proof}[Proof of Theorem~\ref{t:uniqueness}]
Let us prove that any two infinite clusters need to intersect.
Assume $\mathcal{C}$ and $\tilde{\mathcal{C}}$ are infinite open clusters and fix $\ell_{0}$ large enough so that some graph $G^{w}_{\ell_{0}}$ of  the level-$\ell_{0}$ tilling of $\bar{G}_{\infty}$ that intersects both components.

For $\ell > \ell_{0}$, $G^{\sigma^{\ell - \ell_{0}}(w)}_{\ell}$ is the graph of the level-$\ell$ tilling that contains $G^{w}_{\ell_{0}}$.
Since there are at least two disjoint paths connecting $a_{1}$ to $b_{1}$ in $G_{1}$, it follows that
\begin{equation}
\PP_{p}^{\bar{G}_{\infty}} \big( a_{\ell} \leftrightarrow b_{\ell} \text{ in } G^{\sigma^{\ell - \ell_{0}}(w)}_{\ell} \setminus G^{\sigma^{(\ell - 1) - \ell_{0}}(w)}_{\ell-1} \big) \geq p_{\ell-1}^{ \diam(G_{1}) } \geq p_{c}^{\diam(G_{1})}.
\end{equation}
Borel-Cantelli Lemma implies that infinitely many crossings as described in the event above happen almost surely.
In order to conclude the proof, simply notice that, since both $\mathcal{C}$ and $\tilde{\mathcal{C}}$ intersect $G^{w}_{\ell_{0}}$, they must cross either $a_{\ell}$ or $b_{\ell}$ in the higher level pavings.
As soon as these two vertices are connected, the components are necessarily merged.
This concludes the proof.
\end{proof}

\begin{remark}\label{rmk:theta_p_c}
  Let us present a simple argument that implies $\theta^{\bar{G}_{\infty}}(p_{c}) = 0$ almost surely.
  Notice that, almost surely on the realization of $\bar{G}_{\infty}$, there are infinitely many values of $\ell$ such that $G^{\sigma^{2\ell+1}(E_{\infty})}_{2\ell+1}$ is not adjacent to either $a_{2\ell+3}$ or $b_{2\ell+3}$.
  Denote by $A_{\ell}$ and $B_{\ell}$ the (random) sequence of these extreme vertices where the above holds (notice this sequence depends only on the realization of $\bar{G}_{\infty}$).
  Define now the events
  \begin{equation}
    D_{2\ell+3} = \bigg\{ \begin{array}{c}
      \text{all graphs } G^{w}_{2\ell+1} \subset G^{\sigma^{2\ell+3}(E_\infty)}_{2\ell+3} \text{ adjecent to either} \\
      a_{2\ell+3} \text{ or } b_{2\ell+3} \text{ are not crossed}
    \end{array} \bigg\}.
  \end{equation}
  Notice these events are independent and, since $p_{c} = p_{\star}$,
  \begin{equation}
    \PP_{p_{c}}^{\bar{G}_{\infty}}\big( D_{2\ell+3} \big) \geq \big(1-p_{\star}\big)^{\deg(a_2) + \deg(b_2)}.
  \end{equation}

  Finally, observe that
  \begin{equation}
    \PP_{p_{c}}^{\bar{G}_{\infty}} \big( E_{\infty} \notleftright \{A_{\ell}, B_{\ell}\} \big) \geq \PP_{p_{c}}^{\bar{G}_{\infty}} \big(D_{2\ell+3} \big).
  \end{equation}
  The proof now follows from an application of Borel-Cantelli Lemma.
  Another proof of this fact will be obtained in Section~\ref{s:arm_exponents} as a consequence of the existence of arm exponents.
\end{remark}

The following theorem regards the continuity of the percolation function $\theta^{\bar{G}_{\infty}}$ and its proof is analogous to the case of percolation on the Euclidean lattice.
\begin{theorem}[Continuity of the percolation function]
  \label{t:continuity}
  The function $p \mapsto \theta^{\bar{G}_\infty}(p)$ is almost surely continuous.
\end{theorem}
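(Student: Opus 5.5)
We follow the classical Euclidean argument (see Section~8.3 of \cite{Gri99}): write $\theta^{\bar G_\infty}$ as a monotone limit of continuous functions from both sides, and use uniqueness of the infinite cluster (\cref{t:uniqueness}) to identify the two limits. Throughout, fix a realization of $\bar G_\infty$ from the almost-sure set on which \cref{t:p_c_constant}, \cref{t:uniqueness} and $\theta^{\bar G_\infty}(p_c)=0$ (Remark~\ref{rmk:theta_p_c}) all hold. Since $G_\infty$ is locally finite and countable, this set may be taken simultaneously for all $p$ in a countable dense set. Using the standard monotone coupling \eqref{eq:configuration_coupling}, $p\mapsto\theta^{\bar G_\infty}(p)$ is non-decreasing.

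\textbf{Right-continuity.} Write $\theta_r(p)=\PP^{\bar G_\infty}_p(E_\infty\leftrightarrow B_\infty(E_\infty,r)^c)$. The event depends only on the finitely many edges of the ball, so $\theta_r$ is a polynomial in $p$, hence continuous. Moreover $\theta_r$ is non-increasing in $r$ and converges to $\theta^{\bar G_\infty}$ pointwise. A decreasing limit of continuous non-decreasing functions is upper semicontinuous, and being non-decreasing it is therefore right-continuous everywhere.

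\textbf{Left-continuity for $p>p_c$.} In the coupling, $\lim_{q\uparrow p}\theta^{\bar G_\infty}(q)=\PP(E_\infty\leftrightarrow\infty \text{ in } \omega_q \text{ for some } q<p)$. Suppose $E_\infty\leftrightarrow\infty$ in $\omega_p$. Pick $p_c<q_0<p$. By \cref{t:uniqueness} there is a.s.\ an infinite $\omega_{q_0}$-cluster $I$, and since $\omega_{q_0}\le\omega_p$, $I$ is contained in the unique infinite $\omega_p$-cluster, which also contains $E_\infty$. Hence there is a finite $\omega_p$-open path $\gamma$ from $E_\infty$ to $I$. Because $\gamma$ is finite, we have $\max_{e\in\gamma}U_e<p$ a.s., so $\gamma$ is $\omega_q$-open for some $q\in[q_0,p)$. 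Then $E_\infty\leftrightarrow I$ in $\omega_q$, and therefore $E_\infty\leftrightarrow\infty$ in $\omega_q$. This shows the left limit equals $\theta^{\bar G_\infty}(p)$.

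For $p\le p_c$ left-continuity is trivial, since $\theta^{\bar G_\infty}\equiv 0$ on $[0,p_c]$. At $p_c$ itself, this uses $\theta^{\bar G_\infty}(p_c)=0$ from Remark~\ref{rmk:theta_p_c}; right-continuity at $p_c$ then comes from the upper semicontinuity above.

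\textbf{Main obstacle.} The delicate point is that uniqueness and existence of the infinite cluster are stated for each fixed $p$ a.s., whereas the left-continuity argument needs them for $\omega_{q_0}$ and $\omega_p$ jointly in the coupling, for every $p$. Only one fixed $q_0$ is actually used, and uniqueness at $p$ is needed only almost surely for that $p$. So the argument is pointwise in $p$, and this suffices for continuity at each $p$ a.s. in the coupled measure. To upgrade to ``almost surely the function is continuous'', we observe that $\theta^{\bar G_\infty}$ depends only on the graph, while $\mathbb{P}$-null sets for a fixed graph concern only the percolation configuration. The pointwise argument therefore applies to each $p$ for every graph in the full-measure set where \cref{t:uniqueness} holds for all $p>p_c$, which is exactly how \cref{t:uniqueness} is stated.
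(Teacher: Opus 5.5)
Your proof is correct and follows the paper's argument. Right-continuity comes from upper semicontinuity of a decreasing limit of continuous functions, and left-continuity on $[0,p_c]$ from $\theta^{\bar G_\infty}(p_c)=0$ (Remark~\ref{rmk:theta_p_c}). For $p>p_c$ you use uniqueness together with the fact that a finite path open at level $p$ is already open at some level $q<p$, since a.s.\ no $U_e$ equals $p$; the paper phrases this step as a contradiction. Your paragraph on the order of quantifiers covers a point the paper skips. It is sound provided uniqueness holds for all $p>p_c$ simultaneously on a full-measure set of graphs, which the structural proof of \cref{t:uniqueness} supplies.
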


\begin{proof}
  Observe first that $\theta^{\bar{G}_{\infty}}$ is clearly right-continuous, as it is the pointwise limit of nondecreasing continuous functions
  \begin{equation}
    \theta^{\bar{G}_{\infty}}(p) = \lim_{n} \PP_{p}^{\bar{G}_{\infty}} \big( E_{\infty} \leftrightarrow \partial B (E_{\infty}, n) \big).
  \end{equation}

  It remains to prove the left continuity of the function $\theta^{\bar{G}_{\infty}}$.
  This is immediate for $p \in [0, p_{c}]$, since $\theta^{\bar{G}_{\infty}}(p_{c}) = 0$ due to Remark~\ref{rmk:theta_p_c}.
  Assume that $\theta^{\bar{G}_{\infty}}$ is not left-continuous on a point $p' > p_{c}$.

  We now make use of the coupling between configurations at different densities introduced in~\eqref{eq:configuration_coupling}.
  Let us denote by $\mathcal{C}_{p}$ the unique infinite component of the configuration at level $p$.
  Since $\theta^{\bar{G}_{\infty}}$ has a jump at $p'$, there is a positive probability that $E_{\infty} \notin \mathcal{C}_{p}$, for all $p < p'$ and $E_{\infty} \in \mathcal{C}_{p'}$.
  In particular, fixed $\tilde{p} \in (p_{c}, p')$, there exists a path of edges $e_{1}, e_{2}, \dots, e_{n}$ that connects $E_{\infty}$ to $\mathcal{C}_{\tilde{p}}$ that is open at level $p'$ but not entirely open at any level below $p'$.
  This implies that at least one of the edges in the path necessarily satisfies $U_{e_{\ell}} = p'$, where $U$ denotes the uniform random variable used in the coupling~\eqref{eq:configuration_coupling}.
  Since this probability is zero, the function $\theta^{\bar{G}_{\infty}}$ is necessarily continuous.
  This concludes the proof.
\end{proof}

\section{Arm exponents and cluster dimension}
\label{s:arm_exponents}

For $k \geq 0$ we denote by $\mathcal{C}_{\{a_k, b_k\}}$ the cluster boundary of $G_k$, that is
\begin{equation}
  \label{eq:boundary_cluster}
  \mathcal{C}_{\{a_k, b_k\}} := \Big\{
  e \in \Edge_k : e \longleftrightarrow \{a_k, b_k\}
  \Big\}
\end{equation}

Recall $\mathbb{P}_{p}^{k}$ denotes the probability induced in $G_{k}$ when edges are open with probability $p \in [0,1]$. In this section we establish the following result about $\mathcal{C}_{\{a_k, b_k\}}$.
As discussed below, a version of this was established earlier in \cite{hk2009}.

\nc{c:volume_low}
\nc{c:volume_high}
\begin{theorem}
  \label{th:volume-of-critical-cluster}
  There exists constants $d_f$ (referred as the fractal dimension of $\mathcal{C}_{\{a_k, b_k\}}$) and $\uc{c:volume_low}, \uc{c:volume_high} > 0$ such that, at $p_\star$,
  \begin{equation}
    \label{eq:expected_volume_critical}
    \uc{c:volume_low} d_f^k
    \leq \mathbb{E}_{p_\star}^{k} \big[ |\mathcal{C}_{\{a_k, b_k\}}| \big]
    \leq \uc{c:volume_high} d_f^k.
  \end{equation}
  Moreover, $d_f$ satisfies:
  \begin{enumerate}[\quad a)]
  \item $d_f < |\Edge_{1}|$,
  \item $d_f > \max \{ \deg^{\mathrm{out}}(a_1), \deg^{\mathrm{in}}(b_1), \deg(a_1) \wedge \deg(b_1) \}$.
  \end{enumerate}
  As a consequence, we obtain an estimate for the annealed one-arm connection probability.
  If $E_k$ denotes an uniformly selected edge in $G_k$, then
  \begin{equation}
    \label{eq:one_arm_exponent}
    \uc{c:volume_low} \d_{k}(a_k, b_k)^{-\alpha_1}
    \leq \; \mathbb{P}_{p_\star}^{k} \big( E_k \leftrightarrow \{a_k, b_k\} \big)
    \leq \; \uc{c:volume_high} \d_{k}(a_k, b_k)^{-\alpha_1},
  \end{equation}
  where $\alpha_1 = \log \big( |\Edge_{1}|/d_f \big) / \log \d_{1}(a_1, b_1)$ is called the one-arm exponent.
\end{theorem}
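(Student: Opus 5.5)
We propose to prove this by a multi-type branching (linear recursion) argument for the expected number of edges connected to each endpoint. Set $x_k = \mathbb{E}_{p_\star}^k[\#\{e: e \leftrightarrow a_k\}]$ and $y_k = \mathbb{E}_{p_\star}^k[\#\{e: e \leftrightarrow b_k\}]$, together with the joint quantity $z_k = \mathbb{E}[\#\{e: e\leftrightarrow a_k, e\leftrightarrow b_k\}]$. Then $|\mathcal{C}_{\{a_k,b_k\}}| = \#(\leftrightarrow a_k) + \#(\leftrightarrow b_k) - \#(\text{both})$, and we aim to bound its expectation between constant multiples of $x_k+y_k$.

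\textbf{Step 1: the recursion.} Decompose $G_{k+1}$ into the copies $G_k^e$, $e\in\Edge_1$, as in \eqref{eq:decompose}. An edge inside $G_k^e$ with $e=(u,v)\in\Edge_1$ is connected to $a_{k+1}$ if and only if, within $G_k^e$, it is connected to one of the endpoints $u,v$, and that endpoint is connected to $a_{k+1}$ through the other copies. Crossings of the copies are i.i.d.\ with probability $p_\star$ by \eqref{e:cross_at_p_star}. Conditioning on the macroscopic configuration $\eta(e')=\mathbf 1\{G_k^{e'}\text{ crossed}\}$ (which is $\mathrm{Ber}(p_\star)$, i.e.\ a critical percolation on $G_1$), the interior connections of $G_k^e$ to $u$, $v$ and to both depend only on that copy. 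Since the events ``to $u$'', ``to $v$'' and ``to both'' inside a copy, together with whether it is crossed, determine everything, we obtain a linear recursion
$$v_{k+1} = M v_k, \qquad v_k=(x_k,y_k,\text{crossed-refined variants})^T,$$
where $M$ is a fixed nonnegative matrix built from critical percolation on $G_1$. We will need to refine the states by conditioning on whether the copy is crossed, because the interior counts correlate with $\eta(e)$. We then set $d_f$ to be the Perron–Frobenius eigenvalue of $M$, and obtain \eqref{eq:expected_volume_critical} from $v_k \asymp d_f^k$. For this we need $M$ (or a suitable power) to be primitive, or at least that the starting vector has a positive component on the top eigenvector with no Jordan block growth. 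The main obstacle is handling this: irreducibility of $M$ and the correct choice of finite state space. We would first try to show irreducibility using \eqref{e:hypothesis}, since with two edge-disjoint paths each endpoint type feeds every other type. If that fails, we would only use the monotone bounds $v_{k+1}\le M v_k$ and $v_{k+1}\ge M' v_k$ with simpler $2\times2$ matrices.

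\textbf{Step 2: bounds on $d_f$.} For the lower bound, count only the edges of copies adjacent to $a_{k+1}$: every copy $G_k^e$ with $e$ leaving $a_1$ contributes its $a$-side cluster, giving growth of at least $\deg^{\mathrm{out}}(a_1)$. Similarly, copies entering $b_1$ give $\deg^{\mathrm{in}}(b_1)$, and the combined count gives at least $\deg(a_1)\wedge\deg(b_1)$. Strict inequality then comes from the additional positive-probability contribution of non-adjacent copies: by the two disjoint paths, some other copy is connected with probability at least $p_\star^{\diam}>0$, so a Perron–Frobenius comparison gives a strict increase. For the upper bound $d_f<|\Edge_1|$, the total count is at most $|\Edge_1|^k$, and since $\d_1(a_1,b_1)\ge2$ there is a copy not adjacent to $\{a_1,b_1\}$ that is disconnected with probability at least $(1-p_\star)^{\deg}$. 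This strictly decreases the corresponding entries of $M$ below those of the all-ones growth matrix, and strict monotonicity of the Perron root under irreducibility yields $d_f<|\Edge_1|$.

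\textbf{Step 3: the one-arm exponent.} By exchangeability, $\mathbb{P}(E_k\leftrightarrow\{a_k,b_k\}) = \mathbb{E}|\mathcal{C}_{\{a_k,b_k\}}|/|\Edge_1|^k \asymp (d_f/|\Edge_1|)^k$. Using $\d_k(a_k,b_k)=\d_1(a_1,b_1)^k$ from \eqref{e:dist_ak_bk}, this equals $\d_k(a_k,b_k)^{-\alpha_1}$ with $\alpha_1$ as stated, which gives \eqref{eq:one_arm_exponent}.
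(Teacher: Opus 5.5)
Your route is the paper's: a homogeneous linear recursion at $p_\star$ for expected edge counts split by connection type, with $d_f$ its Perron root, and division by $|\Edge_1|^k$ for the one-arm estimate. Refining $(x_k,y_k,z_k)$ by crossing status gives exactly the paper's three types ``$a$-only'', ``$b$-only'', ``both''. An $a$-only edge forces its copy to be uncrossed, and a ``both'' edge forces it to be crossed.

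Irreducibility is not the obstacle you fear. The ``both'' type communicates in both directions with each one-sided type and with itself, by the two edge-disjoint paths and $0<p_\star<1$. This holds even though some one-sided-to-one-sided entries can vanish for suitable orientations. So the $2\times2$ fallback, which would not give matching exponents, is not needed.

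Your lower bounds on $d_f$ also go through once made quantitative. For example, $X_{k+1}\ge \big(\sum_{e'}\mathbb{P}^{1,e'}_{p_\star}(e'_-\leftrightarrow a_1)\big)X_k$, where $X_k$ is the expected number of edges connected to $a_k$. This is what the paper's Collatz--Wielandt test vector $(1,0,1)$ encodes.

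The step that fails as written is the proof of $d_f<|\Edge_1|$. The condition $\mathrm{d}_1(a_1,b_1)\ge 2$ does \emph{not} produce a copy non-adjacent to $\{a_1,b_1\}$. In the diamond, every edge of $G_1$ touches $a_1$ or $b_1$. Accordingly the ``both'' column of $M(p_\star)$ sums to exactly $|\Edge_1|$, since a crossed copy attached to $a_1$ or $b_1$ always lies in the cluster.

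The deficit has to come from a one-sided type instead:
\begin{itemize}
\item Take $e'=\{a_1,u\}$ with $u\notin\{a_1,b_1\}$. With probability at least $(1-p_\star)^{\deg(u)}$, all other edges at $u$ are closed.
\item Hence the column of $M(p_\star)$ indexed by the endpoint $u$ of $e'$ has sum strictly below $|\Edge_1|$. Every column sum is at most $|\Edge_1|$.
\item Adding each column's missing mass gives $N\ge M(p_\star)$ with $N\neq M(p_\star)$ and $(1,1,1)N=|\Edge_1|(1,1,1)$.
\item $N$ is irreducible because it dominates $M(p_\star)$. Strict monotonicity of the Perron root then yields $d_f=\rho(M(p_\star))<\rho(N)=|\Edge_1|$.
\end{itemize}
The paper reaches the same conclusion through an absorbing ``disconnected'' state. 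Every type leaks into it within two steps, so the column sums of $M(p_\star)^2$ are strictly below $|\Edge_1|^2$.
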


\begin{remark}\label{rmk:one_arm}
The statement in Equation~\eqref{eq:one_arm_exponent} can be seen as determining the one-arm exponent in hierarchical percolation, akin to that established for critical site percolation on the triangular lattice by Lawler, Schramm, and Werner~\cite{lsw_2002}.

We remark that some of the statements above were first proved by Hambly and Kumagai~\cite{hk2009} for the case of the diamond hierarchical lattice. The strategy of the proof used here is similar to the one from~\cite{hk2009}, but we are able to cover a larger class of generating graphs, in part due to Theorem~\ref{t:crossing}.
Furthermore, some of the intermediary results obtained during the proof below will be used in \cref{s:exponent_relations} to analyze the scaling relations.
In particular, the off-critical dynamical system~\eqref{eq:x_ab_evolution_2}, closely related to~\eqref{eq:x_ab_evolution}, is central to the work carried out in Section~\ref{s:exponent_relations}.
\end{remark}

\begin{remark}
The fractal dimension $d_{f}$ can be obtained as the largest eigenvalue of the matrix $M(p_{\star})$ defined in~\eqref{eq:M_p}. This provides an exact expression for $d_{f}$ that we exploit in order to obtain the bounds in Theorem~\ref{th:volume-of-critical-cluster}.
\end{remark}

As in the above, we here explore the recursive behavior of the graphs $G_{k}$ to understand the quantity $\mathbb{E}_{p_\star}^{k} \big[ |\mathcal{C}_{\{a_k, b_k\}}| \big]$.
However, it is necessary to keep track of more information at each scale.
More precisely, let
\begin{equation}
  \label{eq:three_expectations}
  \begin{split}
    x_k^a(p) & = \mathbb{E}^k_p \Big[ \# \big\{
    e \in \Edge_k; e \leftrightarrow a_k \text{ but } e \notleftright b_k
    \big\} \Big],\\
    x_k^b(p) & = \mathbb{E}^k_p \Big[ \# \big\{
    e \in \Edge_k; e \notleftright a_k \text{ but } e \leftrightarrow b_k
    \big\} \Big], \text{ and}\\
    x_k^{ab}(p) & = \mathbb{E}^k_p \Big[ \# \big\{
    e \in \Edge_k; e \leftrightarrow a_k \text{ and } e \leftrightarrow b_k
    \big\} \Big].
  \end{split}
\end{equation}
Note that $x_0^a(p) = x_0^b(p) = 0$ and $x_0^{ab}(p) = 1$ for every $p \in [0, 1]$.
In what follows, we examine how these quantities change as we increase the scale $k$.

Fix $k \geq 0$ and recall that we can tile the graph $G_{k + 1}$ by copies of $G_{k}$ denoted by $\{G_{k+1}^{e'}\}_{e' \in \Edge_{1}}$ (see Equation~\eqref{eq:decompose}), so for example
\begin{equation}
  \label{eq:decompose_volume}
  x_{k + 1}^a = \sum_{e' \in \Edge_{1}}\mathbb{E}^{k + 1}_p \Big[ \# \big\{
  e \text{ edge of } G_{k+1}^{e'}; e \leftrightarrow a_{k + 1}
  \text{ but } e \notleftright b_{k + 1}
  \big\} \Big]
\end{equation}
and similarly for $x_{k + 1}^b$ and $x_{k + 1}^{ab}$.
For $e' \in \Edge_{1}$, we re-write its corresponding contribution in~\cref{eq:decompose_volume} in terms of $x_k^\cdot$'s, but for this we will need to eliminate $G_{k+1}^{e'}$ from $G_{k + 1}$.
Write $\mathbb{P}_p^{k + 1, e'}$ for the probability measure induced on the graph with edges in $\{G_{k+1}^{e''}\}_{e'' \in \Edge_{1} \setminus \{e'\}}$. We also denote by $a_{k}^{e'}$ (respectively, $b_{k}^{e'}$) for the vertex corresponding to $a_{k}$ (respectively, $b_{k}$) in the copy $G^{e'}_{k+1}$ associated to the edge $e' \in \Edge_{1}$. We obtain
\begin{equation}
  \label{eq:volume_e_prime}
  \begin{split}
    \mathbb{E}^{k}_p \Big[ \# & \big\{
    e \text{ edge of } G_{k+1}^{e'}; e \leftrightarrow a_{k + 1}
    \text{ but } e \notleftright b_{k + 1}
    \big\} \Big]\\
    & = x_k^a \; \mathbb{P}_p^{k + 1 , e'} \big(
    a_k^{e'} \leftrightarrow a_{k + 1} \text{ but } a_k^{e'} \notleftright b_{k + 1}
    \big)\\
    & \phantom{=} + x_k^b \; \mathbb{P}_p^{k + 1 , e'} \big(
    b_k^{e'} \leftrightarrow a_{k + 1} \text{ but } b_k^{e'} \notleftright b_{k + 1}
    \big)\\
    & \phantom{=} + x_k^{ab} \; \mathbb{P}_p^{k + 1 , e'} \big(
      \{a_k^{e'}, b_k^{e'}\} \leftrightarrow a_{k + 1}
      \text{ but } \{a_k^{e'}, b_k^{e'}\} \notleftright b_{k + 1}
    \big).
  \end{split}
\end{equation}
The above can be written in terms of $G_1$ as
\begin{equation}
  \label{eq:volume_e_prime2}
  \begin{split}
    & x_k^a \; \mathbb{P}_{f_{k}(p)}^{1 , e'} \big(
    e'_- \leftrightarrow a_1 \text{ but } e'_+ \notleftright b_1
    \big)\\
    & \phantom{=} + x_k^b \; \mathbb{P}_{f_{k}(p)}^{1 , e'} \big(
    e'_+ \leftrightarrow a_1 \text{ but } e'_+ \notleftright b_1
    \big)\\
    & \phantom{=} + x_k^{ab} \; \mathbb{P}_{f_{k}(p)}^{1 , e'} \big(
      e' \leftrightarrow a_1
      \text{ but } e' \notleftright b_1
    \big).
  \end{split}
\end{equation}
Therefore, if we define the matrix $M(p)$ as the element-wise sum over all $e' \in \Edge_{1}$ of
\begin{equation}
  \label{eq:M_p}
  \begin{pmatrix}
    \mathbb{P}_{p}^{1 , e'} \big(
    e'_- \leftrightarrow a_1, e'_- \notleftright b_1
    \big)
    & \mathbb{P}_{p}^{1 , e'} \big(
      e'_+ \leftrightarrow a_1, e'_+ \notleftright b_1
      \big)
    & \mathbb{P}_{p}^{1 , e'} \big(
      e' \leftrightarrow a_1, e' \notleftright b_1
      \big)\\
    \mathbb{P}_{p}^{1 , e'} \big(
    e'_- \notleftright a_1, e'_- \leftrightarrow b_1
    \big)
    & \mathbb{P}_{p}^{1 , e'} \big(
      e'_+ \notleftright a_1, e'_+ \leftrightarrow b_1
      \big)
    & \mathbb{P}_{p}^{1 , e'} \big(
      e' \notleftright a_1, e' \leftrightarrow b_1
      \big)\\
    \mathbb{P}_{p}^{1 , e'} \big(
    e'_- \leftrightarrow a_1, e'_- \leftrightarrow b_1
    \big)
    & \mathbb{P}_{p}^{1 , e'} \big(
      e'_+ \leftrightarrow a_1, e'_+ \leftrightarrow b_1
      \big)
    & \mathbb{P}_{p}^{1 , e'} \big(
      e' \leftrightarrow a_1, e' \leftrightarrow b_1
      \big)
  \end{pmatrix},
\end{equation}
we can write
\begin{equation}
  \label{eq:x_ab_evolution}
  (x_{k + 1}^a, x_{k + 1}^b, x_{k + 1}^{ab})^{\intercal}
  = M(f_{k}(p)) \cdot (x_k^a, x_k^b, x_k^{ab})^{\intercal}.
\end{equation}
This dynamical system will be crucial for the proof of Theorems~\ref{th:volume-of-critical-cluster} and~\ref{th:beta_and_relations}.

\begin{proof}[Proof of \cref{th:volume-of-critical-cluster}]
  Due to the fact that $p_\star$ is a fixed point of $f$, we can write
  \begin{equation}
    \mathbb{E}_{p_\star}^k \big[ |\mathcal{C}_{\{a_k, b_k\}}| \big]
    = x_k^a + x_k^b + x_k^{ab}
    = (1, 1, 1) \cdot M(p_\star)^k (0, 0, 1)^\intercal.
  \end{equation}
  Since the Perron-Frobenius matrix $M(p_\star)$ is irreducible, \cref{eq:expected_volume_critical} follows with $d_f$ defined as the principal eigenvalue of $M(p_\star)$ (recall that all entries in the left eigenvector are positive).

  All we need to do now is to verify that $d_f$ satisfies the stated properties.
  We start by proving that $d_f < |\Edge_1|$ as this is done by first extending the matrix $M(p_{\star})$ with one more column and one more row, to represent the possibility of having disconnected subgraphs.
  More precisely, let
  \begin{equation*}
    M' = \frac{1}{|\Edge_1|}\sum_{e' \in \Edge_{1}}
    \begin{pmatrix}
      M_{1, 1}(e')&M_{2, 1}(e')&M_{3, 1}(e')& 0\\
      M_{1, 2}(e')&M_{2, 2}(e')&M_{3, 2}(e')& 0\\
      M_{1, 3}(e')&M_{2, 3}(e')&M_{3, 3}(e')& 0\\
      \mathbb{P}^{1, e'}_{p_\star} (e'_- \notleftright \{a_1, b_1\})
      & \mathbb{P}^{1, e'}_{p_\star} (e'_+ \notleftright \{a_1, b_1\})
      & \mathbb{P}^{1, e'}_{p_\star} (e' \notleftright \{a_1, b_1\})
      & 1
    \end{pmatrix},
  \end{equation*}
  and note that all columns of $M'$ sum up to exactly one, or in other words, its transpose is a stochastic matrix.
  Seeing $(M')^{t}$ as the transition matrix of a Markov chain on $\{1, 2, 3, 4\}$, state $4$ is always absorbing and (at $p_\star$) all other states $\{1, 2, 3\}$ are connected to $4$ through some path of length at most $2$.
  This means that $(M')^2$ has only positive entries on its last row and its columns sum up to one.
  Therefore the columns of $M^2$ sum up to values strictly smaller than $|\mathcal{E}_1|^2$.
  We conclude by observing that the Perron-Frobenius eigenvalue of $M$ is at most the maximum value of the square root of such sums.
  This means that $d_f/|\Edge_1| < 1$ as desired.

  We now prove that $d_f > \deg^{\mathrm{out}}(a_1)$. Start by noticing that
  \begin{equation}
    (1, 0, 1) \cdot M = \Big(
    \sum_{e' \in \Edge_{1}} \mathbb{P}^{1, e'}_{p_\star} (e'_- \leftrightarrow a_1),
    \quad
    \sum_{e' \in \Edge_{1}} \mathbb{P}^{1, e'}_{p_\star} (e'_+ \leftrightarrow a_1),
    \quad
    \sum_{e' \in \Edge_{1}} \mathbb{P}^{1, e'}_{p_\star} (e' \leftrightarrow a_1)
    \Big).
  \end{equation}
  By the Collatz-Wielandt characterization of the Perron-Frobenius eigenvalue, it follows that $d_f = \max_{x \neq 0; x_i \geq 0} \min_{i; x_i > 0} (xM)_i/x_i$, so that $d_f$ is at least
  \begin{equation*}
    \min \Big\{
    \sum_{e' \in \Edge_{1}} \mathbb{P}^{1, e'}_{p_\star} (e'_- \leftrightarrow a_1),
    \sum_{e' \in \Edge_{1}} \mathbb{P}^{1, e'}_{p_\star} (e' \leftrightarrow a_1)
    \Big\}
    = \sum_{e' \in \Edge_{1}} \mathbb{P}^{1, e'}_{p_\star} (e'_- \leftrightarrow a_1)
    > \deg^{\mathrm{out}}(a_1),
  \end{equation*}
  as desired.
  The proof that $d_f > \deg^{\mathrm{in}}(b_1)$ is completely analogous, while the final bound is obtained by replacing the vector $(1, 0, 1)$ with $(\alpha, \beta, \alpha + \beta)$, where $\alpha = \deg^{\mathrm{out}}(b_1)$ and $\beta = \deg^{\mathrm{in}}(a_1)$.

  Finally, to prove \cref{eq:one_arm_exponent}, observe that
  \begin{equation}
    \begin{split}
      \mathbb{P}^{k}_{p_{\star}} \big( E_k \leftrightarrow \{a_k, b_k\} \big)
      = \frac{1}{|\Edge_{1}|^k} \sum_{e \in \Edge_k}
        \mathbb{P}^{k}_{p_{\star}} \big( e \leftrightarrow \{a_k, b_k\} \big)
      = \frac{1}{|\Edge_{1}|^k}
        \mathbb{E}^{k}_{p_{\star}} \big[ |\mathcal{C}_{\{a_k, b_k\}}| \big]
    \end{split}
  \end{equation}
  The result then follows from \cref{eq:expected_volume_critical} and the definition of $\alpha_1$ in the statement of the theorem.
\end{proof}

The next result we state extends Theorem~\ref{th:volume-of-critical-cluster} by actually proving that the size of the critical cluster converges in distribution, when properly rescaled.
This is an extension of Section~3 in~\cite{hk2009}.

\begin{theorem}[Hambly-Kumagai]
  \label{t:volume_fluctuations}
  At the critical point $p_\star$,
  \begin{equation}
    \label{eq:volume-of-critical-cluster}
    \frac{|\mathcal{C}_{\{a_k, b_k\}}|}{d_f^k} \underset{k}{\Longrightarrow} \nu
  \end{equation}
  where the above denotes convergence in distribution towards a probability distribution $\nu$ supported on $(0, \infty)$.
\end{theorem}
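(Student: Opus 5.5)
Our plan is to view the cluster as a multi-type Galton--Watson process indexed by the address tree and to run a Kesten--Stigum type martingale argument. The key observation is the recursive decomposition behind \cref{eq:x_ab_evolution}, read at the level of random variables rather than expectations. Tile $G_{k+1}$ from the top: first decide which of the $|\Edge_1|$ level-$k$ subgraphs $G_{k+1}^{e'}$ are crossed, which at $p_\star$ is a $\mathrm{Ber}(p_\star)$ configuration on $G_1$ by \cref{e:cross_at_p_star}. This determines, for each $e'$, whether $a_k^{e'}$, $b_k^{e'}$, or both are connected to $\{a_{k+1},b_{k+1}\}$. Assign each copy a \emph{type} in $\{a,b,ab\}$, or ``dead'' if it is not connected. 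The contribution of $G_{k+1}^{e'}$ to $|\mathcal{C}_{\{a_{k+1},b_{k+1}\}}|$ is then the number of edges of $G^{e'}$ connected to the relevant endpoints. Conditionally on the crossing statuses, the copies are independent. The one subtlety is that the inner configuration of $G^{e'}$ must be conditioned on its own crossing indicator. Because of this conditioning, the type space should be refined to pairs (connection type, crossed/not crossed). With that refinement we get a genuine multi-type branching structure in which the offspring of a level-$j$ piece are the $|\Edge_1|$ level-$(j-1)$ pieces with their types, and the leaves at level $0$ are single edges counted with weight $1$.

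We work top-down. Let $Z_k$ be the vector counting, for each type, how many level-$(k-j)$ copies at depth $j$ have that type. The mean offspring matrix is a version of $M(p_\star)$ from \cref{eq:M_p}, refined by crossing status. Its Perron--Frobenius eigenvalue is still $d_f$, since summing over the crossing status recovers $M(p_\star)$ through the fixed-point property. We set $W_j = \langle v, Z^{(j)}\rangle / d_f^{j}$ with $v$ the right Perron eigenvector; this is a nonnegative martingale in $j$. The number of offspring is bounded by $|\Edge_1|$, so the $L^2$ (hence $L\log L$) condition holds trivially. The Kesten--Stigum theorem then gives $W_j\to W$ in $L^2$, with $\mathbb{E}W>0$.

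The main obstacle is relating $|\mathcal{C}_k|/d_f^k$, where the depth equals the height $k$ of the tree, to a single limiting object. The structure is a triangular array: $G_k$ is not literally the depth-$k$ truncation of one fixed infinite tree. However, the top-down exploration of $G_k$ to depth $j$ has the same law for every $k\ge j$, given the top type ``$ab$, crossed or not with probability $p_\star$''. We can therefore couple all $k$ on a single infinite-type tree. In this coupling $|\mathcal{C}_k| = \sum_{\text{depth-}j\text{ pieces } u} X^{(u)}_{k-j}$, where the $X^{(u)}$ are independent copies of the level-$(k-j)$ count of type $t(u)$. Using $\mathbb{E}X_m^{t}= c_t d_f^m(1+o(1))$ (Perron--Frobenius as in \cref{th:volume-of-critical-cluster}) and $\Var X_m^t \le C d_f^{2m}$ (a second-moment recursion analogous to \cref{eq:x_ab_evolution}), we obtain the conditional bound $\Var(|\mathcal{C}_k|/d_f^k \mid \mathcal{F}_j)\le C\, |Z^{(j)}|\, d_f^{-2j}$. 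This tends to $0$ in mean, since $\mathbb{E}|Z^{(j)}|\asymp d_f^{j}$ and $d_f>1$ by \cref{th:volume-of-critical-cluster}~b). Hence $|\mathcal{C}_k|/d_f^k\to W$ in $L^2$ under the coupling, which gives convergence in distribution to $\nu=\mathrm{law}(W)$.

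Finally, we must show that $\nu$ is supported on $(0,\infty)$. Here $\nu$ is meant as the law conditioned on the cluster being nonempty at every scale, or else we argue that the cluster is always nonempty: $|\mathcal{C}_{\{a_k,b_k\}}|\ge \deg(a_k)\cdot 0$ is not enough, but the edges adjacent to $a_k$ that are open guarantee that $W=0$ can happen only on extinction of the non-dead types. We handle this with the standard Kesten--Stigum dichotomy: $\{W=0\}$ coincides a.s.\ with extinction. Every type has at least $\deg(a_1)\wedge\deg(b_1)\ge 2$ children that are each alive with probability bounded below, so we need to show that extinction of the process has probability zero or is absorbed into the atom. We expect the precise statement of the support, and whether an atom at $0$ must be excluded by conditioning, to be the delicate point. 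We would first check that at each level some open edge at $a_k$ survives with probability tending to one, using $\deg(a_k)\to\infty$ from \cref{l:degree_ak_scaling}.
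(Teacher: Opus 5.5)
The branching-process setup is essentially the paper's. Your six types (outside connection type together with crossed/not crossed) are the alive labels $100,101,010,011,110,111$ of the MTBP built from left/right connectivity, and the paper also finishes with Kesten--Stigum. Make sure the connection type of a copy is defined by connections \emph{avoiding the copy itself}, as in the paper's left/right connectivity. Otherwise, for crossed copies, the labels $101$, $011$, $111$ get merged even though their offspring laws differ, and you no longer have a branching process.

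The genuine gap is the last claim of the statement: you never show that $\nu$ has no atom at $0$, and you even suggest the result might need conditioning. It does not, and the missing observation is elementary. An edge lies in $\mathcal{C}_{\{a_k,b_k\}}$ as soon as one of its endvertices is connected to $\{a_k,b_k\}$. So every edge incident to $a_k$ or $b_k$ is in the cluster whether or not it is open, and in fact $|\mathcal{C}_{\{a_k,b_k\}}|\ge \deg_k(a_k)$ always.

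In tree language, suppose $L^w_-=1$. Every child $we'$ with $e'$ incident to $a_1$ has $a^w_k$ as an endpoint, and $a^w_k$ reaches the boundary avoiding $G^w_k\supseteq G^{we'}_k$. So this child is alive \emph{deterministically}, not merely with probability bounded below; the same holds when $L^w_+=1$. Hence every alive individual has at least $\deg_1(a_1)\wedge\deg_1(b_1)\ge \cut(G_1)\ge 2$ alive children, and the extinction probability is $0$ from every alive type.

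The Kesten--Stigum dichotomy $\{W=0\}=\{\text{extinction}\}$ a.s.\ then gives $W>0$ a.s., i.e.\ $\nu$ is supported on $(0,\infty)$. The $L\log L$ condition is trivial here because offspring are bounded by $|\Edge_1|$. Your $L^2$ argument alone only yields $\mathbb{E}W>0$, that is $\nu\neq\delta_0$. Your plan to find an \emph{open} edge at $a_k$ targets the wrong event.

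Two smaller points. First, the ``triangular array'' is not an obstacle. By homogeneity at $p_\star$ (Lemma~\ref{l:independent_cousins}\,c)), the labels on $\mathcal{T}_k$ have the law of the first $k$ generations of one fixed MTBP. So $|\mathcal{C}_{\{a_k,b_k\}}|$ is equal in law to its generation-$k$ population, and the vector form of Kesten--Stigum applies directly. Your coupling and conditional-variance computation is correct but only reproves this.

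Second, the mean offspring matrix of your top-down process is not a refinement of $M(p_\star)$, which is indexed by internal connection classes in a bottom-up recursion. So ``summing over crossing status recovers $M(p_\star)$'' does not justify $\rho=d_f$. Instead, compare $\mathbb{E}|\mathcal{C}_{\{a_k,b_k\}}|\asymp d_f^k$ from Theorem~\ref{th:volume-of-critical-cluster} with the growth $\asymp\rho^k$ that Perron--Frobenius gives for the irreducible offspring matrix. Alternatively, use the paper's Fatou argument on the Kesten--Stigum limit.
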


\begin{remark}
  The above statement was obtained earlier in Hambly and Kumagai~\cite{hk2009} for the case of the diamond hierarchical lattice.
  Even though the two proofs are similar, we chose to write it down for arbitrary seed graphs $G_1$ (satisfying \eqref{e:hypothesis}) for the reader's convenience.
\end{remark}

In order to understand the scaling limit of $\mathcal{C}_{\{a_k, b_k\}}$, we will introduce an auxiliary Multi-Type Branching Process (MTBP).
But before doing so, we will need some further notation.
For $k \in \{0, 1, \dots\} \cup \{ \infty \}$, denote by $\mathcal{T}_k$ the set of words of length at most $k$, that is
\begin{equation}
  \label{eq:trees}
  \mathcal{T}_k = \mcup_{j = 0}^k \Edge_{1}^j, \qquad \text{and} \qquad
  \mathcal{T}_\infty = \mcup_{j \geq 0} \Edge_{1}^j.
\end{equation}
Recall that for $w \in \Edge_{1}^j$, we write $|w| = j$ for its length.
We regard $\mathcal{T}_k$ as a tree of depth~$k$, where $\varnothing$ stands for the root and if $w' = we$, we say that $w'$ is a child of $w$.
Recalling the notation from \cref{sec:notation}, to each node $w \in \mathcal{T}_k$ we associate the subgraph $G_k^w \subseteq G_k$.
Observe finally that the leaves of $\mathcal{T}_k$ correspond to the set of edges of $G_k$.

For each node $w \in \mathcal{T}_k$, we associate a label $L^w \in \{0, 1\}^3$ (thus there are eight possible labels in total) which will be a function of the percolation configuration inside $G_k$, so one can see $L^w$ as a random variable in $\Omega_k$.
In order to define $L^w$, we will introduce now the notion of \emph{left connectivity}: the graph $G_k^w$ is \emph{left connected} to the boundary of $G_k$, if $a^{w}_{k}$, the point corresponding to $a_{|w|}$ in the graph $G_{k}^{w}$ is connected to $\{a_k, b_k\}$ while avoiding the copy $G_{k}^{w}$, that is
\begin{equation}
  \label{eq:left_connected}
  \Big[ a^{w}_{k} \xleftrightarrow{G_k \setminus G_k^w}
  \{a_k, b_k\} \Big],
\end{equation}
see illustration in~\cref{fig:left_connected}.
Note that this event is independent of the state of the edges inside $G_k^w$.
Analogously, we say that $G_k^w$ is \emph{right connected} to the boundary of $G_k$ if $b^{w}_{k}$ is connected to the boundary $\{a_k, b_k\}$ of $G_k$ without using $G_k^w$ itself.

Given $w \in \mathcal{T}_k$ we define $L^w = (L^w_-, L^w_+, L^w_\circ)$, where
\begin{equation}
\begin{split}
  \label{eq:left_right_cross}
  L^w_- & := \mathbf{1}_{\big[ G_k^w
        \text{ is left connected to the boundary of $G_k$}, \big]},\\
  L^w_+ & := \mathbf{1}_{\big[ G_j^w
        \text{ is right connected to the boundary of $G_k$} \big]}, \text{ and}\\
  L^w_\circ & := \mathbf{1}_{\big[ G_k^w
        \text{ is crossed internally} \big]}.
\end{split}
\end{equation}
For example, we have $L^w = 000$ if none of the above events occurred and $L^{w} = 111$ if all of them did.

Observe that knowing the labels of all nodes at generation $j$ (that is, knowing $L^{w}$ for all words $w \in \mathcal{E}_{1}^{j}$), one is able to determine the labels of all nodes at previous generations $0, \dots, j - 1$.

\begin{figure}[h]
  \centering
  \begin{tikzpicture}[scale = .8]
    \foreach \x in {0,...,2} {
      \begin{scope}[shift={(5*\x,0)}]
        \draw[fill=gray!30!white] (0, 0) -- (2, 1) -- (4, 0) -- (2, -1) -- (0, 0);
        \draw[fill=white] (1, .2) -- (1.5, .4) -- (2, .2) -- (1.5, 0) -- (1, .2);
        \node at (4.3, 0) {\tiny $G_k$};
        \node at (2.2, .4) {\tiny $G_k^w$};
        \fill (0, 0) circle (0.04);
        \node at (0, -.2) {\tiny $a_k$};
        \fill (4, 0) circle (0.04);
        \node at (4.05, -.2) {\tiny $b_k$};
        \fill (1, .2) circle (0.04);
        \node at (0.85, 0) {\tiny $a^{w}_{k}$};
        \fill (2, .2) circle (0.04);
        \node at (2.2, 0) {\tiny $b^{w}_{k}$};
      \end{scope}
    }
    \draw[color=red!50!black] (0, 0) .. controls ++(.2,-.05) .. (.5,.15)
    .. controls ++(.2,.1) .. (1,.2);
    \node at (2, -1.4) {$101$};
    \draw[color=red!50!black] plot [smooth, tension=10] coordinates
    { (9, 0) (8.5, .1) (7.5, -.3) (6.5, -.5) (5.4, -.1) (5.5,.15) (6,.2) };
    \draw[color=blue!70!black] plot [smooth, tension=10] coordinates
    { (1, .2) (1.4, .13) (1.6, .26) (2, .2) };
    \node at (7, -1.4) {$100$};
    \draw[color=red!50!black] plot [smooth, tension=10] coordinates
    { (12, .2) (12.5, .2) (13.2, -.15) (14, 0) };
    \draw[color=blue!70!black] plot [smooth, tension=10] coordinates
    { (11, .2) (11.4, .13) (11.6, .26) (12, .2) };
    \node at (12, -1.4) {$011$};
  \end{tikzpicture}
  \caption{An illustration of the left and right connected events, as well as the label of the boxes in three different situations.
    Note that only the gray region can be used in the connection from $a^{w}_{k}$ or $b^{w}_{k}$ to $\{a_k, b_k\}$ and only the white region can be used for the crossing.}
  \label{fig:left_connected}
\end{figure}

Now that we have introduced the labels of every $w \in \mathcal{T}_k$, our MTBP will be defined once we specify which nodes are alive or dead in $\mathcal{T}_k$:
\begin{display}
  \label{eq:mtbp_alive} A node $w \in \Edge^j_{1}$, for $j = 0, \dots, k$ is said to be alive if and only if the associated graph $G_k^w$ is either left or right connected.
  Otherwise it is said to be dead, which happens precisely if it is labeled $000$ or $001$.
\end{display}
In particular, at generation $j = 0$ the root $\varnothing$ is always alive and it has label either $111$ or $110$ depending on whether $G_k$ is internally crossed or not. This happens with probability $f^{k}(p)$ and $1-f^{k}(p)$, respectively.

\begin{remark}
  Observe that:
  \begin{enumerate}[\quad a)]
  \item if $L^w \in \{ 000, 001 \}$ (thus $w$ is dead), then it is not left nor right connected to $\{a_k, b_k\}$.
    In particular, all its subgraphs must have a label of type $000$ or $001$ as well.
    In other words, only alive individuals can have alive offspring.
  \item from \cref{eq:mtbp_alive} above, the alive individuals at the last generation ($j = k$) correspond precisely to $\mathcal{C}_{\{a_k, b_k\}}$.
    This is why understanding the asymptotic behavior of this MTBP will give us information about $|\mathcal{C}_{\{a_k, b_k\}}|$.
    \item Notice that since boundary edges are trivially connected to either $a_{k}$ or $b_{k}$, the MTBP never dies out.
  \end{enumerate}
\end{remark}

The process introduced above also considered by Hambly and Kumagai~\cite{hk2009}, where the authors study the fractal properties of critical percolation in the particular case of the diamond hierarchical lattice (see Figure~\ref{f:d_k}).

The next lemma is devoted to proving that the above process is indeed a MTBP.
Fix $0 \leq j < k$ and an assignment of labels to certain nodes of $\mathcal{T}_k$.
We say that a family of labels is \emph{consistent} if there is some configuration of open and closed edges of $G_k$ that induces it.

\begin{figure}
  \centering
  \begin{tikzpicture}[level/.style={sibling distance=60mm/#1},scale=.8]
    \node [circle,draw] (z){$\varnothing$}
    child {node [circle,draw] (a) {\tiny $a$}
      child {node [circle,draw,fill=verde] (aa) {\tiny $aa$}
        child {node [circle,draw,fill=azul] (aaa) {\tiny $aaa$}}
        child {node [circle,draw,fill=azul] (aab) {\tiny $aab$}}
      }
      child {node [circle,draw,fill=verde] (ab) {\tiny $ab$}
        child {node [circle,draw,fill=azul] (aba) {\tiny $aba$}}
        child {node [circle,draw,fill=azul] (abb) {\tiny $abb$}}
      }
    }
    child {node [circle,draw] (b) {\tiny $b$}
      child {node [circle,draw,fill=green!20!gray] (ba) {\tiny $ba$}
        child {node [circle,draw,fill=roxo] (baa) {\tiny $baa$}}
        child {node [circle,draw,fill=roxo] (bab) {\tiny $bab$}}
      }
      child {node [circle,draw,fill=verde] (bb) {\tiny $bb$}
        child {node [circle,draw,fill=azul] (bba) {\tiny $bba$}}
        child {node [circle,draw,fill=azul] (bbb) {\tiny $bbb$}
          child [grow=right] {node (q) {$3$} edge from parent[draw=none]
            child [grow=up] {node (r) {$2$} edge from parent[draw=none]
              child [grow=up] {node (s) {$1$} edge from parent[draw=none]
                child [grow=up] {node (t) {$0$} edge from parent[draw=none]
                }
              }
            }
          }
        }
      }
    };
  \end{tikzpicture}
  \caption{\cref{l:independent_cousins} gives us that, conditioned on the labels of green vertices, the red labels are independent of the blue ones \cref{eq:independent_families}.
  Moreover, the law of the red labels only depend on the green ones through the label of the dark green node \cref{eq:offspring_markov}.}
\end{figure}

Roughly speaking, the next lemma characterizes the distribution of labels at generation $j + 1$, given those at generation $j$.
This is done in two claims:
First, the offspring of a node at generation $j$ is independent of the others.
Second, their marginal distributions are given explicitly.

\begin{lemma}
  \label{l:independent_cousins}
  Fix $0 \leq j < k$, $p \in [0,1]$, and a consistent set of labels $\mathcal{L} = (l^w)_{w \in \Edge_{1}^j}$.
  Then
  \begin{enumerate}[\quad a)]
  \item ``The families produced by individuals at generation $j$ are independent'':
    \begin{equation}
      \label{eq:independent_families}
      \big( L^{we_1}, \dots, L^{we_{|\Edge_{1}|}} \big)_{w \in \Edge_{1}^{j}}
      \text{, are independent under }
      \mathbb{P}^{k}_{p} \big[ \cdot \big|
      L^w = l^w; \text{ for $w \in \Edge_{1}^{j}$} \big],
    \end{equation}
    see \cref{r:cousins}.
  \item ``The type of your father is the only relevant information about your previous generation'':\\
    Fixed $w_0 \in \Edge_{1}^{j}$ and some function $f: \{0, 1\}^{3 \times |\Edge_{1}|} \to \mathbb{R}$,
    \begin{equation}
      \label{eq:offspring_markov}
      \begin{split}
        \mathbb{E}_{p}^{k} \Big[ f \big( L^{w_0e_1}, & \dots, L^{w_0e_{|\Edge_{1}|}} \big)
                                                 \Big| L^w = l^w; \text{ for all $w \in \Edge_{1}^{j}$}
                                                 \Big]\\
        = & \mathbb{E}_{p}^{k} \Big[
            f \big( L^{w_0e_1}, \dots, L^{w_0e_{|\Edge_{1}|}} \big)
            \Big| L^{w_0} = l^{w_0} \Big],
      \end{split}
    \end{equation}
    see again \cref{r:cousins} for a discussion.
  \item ``The offspring distribution is homogeneous across generations at the critical point $p_{\star}$'':
    \begin{equation}
      \label{eq:offspring_rerooting}
      \mathbb{E}_{p_{\star}}^{k} \Big[
      f \big( L^{w_0e_1}, \dots, L^{w_0e_{|\Edge_{1}|}} \big)
      \Big| L^{w_0} = l^{w_0} \Big]
      = \mathbb{E}_{p_{\star}}^{k - j} \Big[
      f \big( L^{e_1}, \dots, L^{e_{|\Edge_{1}|}} \big)
      \Big| L^{\varnothing} = l^{w_{0}} \Big].
    \end{equation}
  \end{enumerate}
\end{lemma}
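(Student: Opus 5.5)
The plan is to realize everything through a decomposition of the configuration on $G_k$ into independent pieces indexed by the tiling at level $j+1$, namely the configurations inside the graphs $G_k^{we}$, $w\in\Edge_1^j$, $e\in\Edge_1$. We first set some notation. For $w \in \Edge_1^j$ we write $\omega^{\mathrm{in}}_w$ for the configuration restricted to $G_k^w$ and $\omega^{\mathrm{out}}_w$ for the configuration on $G_k\setminus G_k^w$; these two are independent under $\mathbb{P}^k_p$.

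The key structural observation is that the labels at generation $j$ factor through a small amount of information. Recall that $L^w_\circ$ depends only on $\omega^{\mathrm{in}}_w$, through the crossing indicator of $G_k^w$. The pair $(L^w_-,L^w_+)$, by contrast, depends only on $\omega^{\mathrm{out}}_w$. Moreover, by \eqref{eq:recursive_crossing} applied at every scale, the connectivity of $G_k\setminus G_k^w$ between the vertices of the level-$j$ tiling is a function only of the crossing indicators $(L^{w'}_\circ)_{w'\in\Edge_1^j\setminus\{w\}}$. Consequently the whole family $\mathcal{L}=(L^w)_{w\in \Edge_1^j}$ is a deterministic function of the vector $(L^w_\circ)_{w\in\Edge_1^j}$, which is a vector of independent Bernoulli$(f_{k-j}(p))$ variables.

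For the children $L^{we}$, $e \in \Edge_1$, we proceed in the same way. The entry $L^{we}_\circ$ depends on the configuration inside $G_k^{we}$. The entries $L^{we}_\pm$ are determined by two things: first, the crossing indicators of the siblings $G_k^{we'}$ with $e' \neq e$, which decide whether $a^{we}_k,b^{we}_k$ reach $a^w_k$ or $b^w_k$ inside $G_k^w\setminus G_k^{we}$; and second, the values $L^w_-,L^w_+$, which decide whether $a^w_k, b^w_k$ are connected to $\{a_k,b_k\}$ outside $G_k^w$. Connections that leave $G_k^w$ must pass through $a^w_k$ or $b^w_k$, because $G_k^w$ is attached to the rest of the graph only at these two vertices. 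Hence the children's labels are a fixed function $\Phi$ of the sibling crossing vector $\big(\mathbf 1\{G_k^{we} \text{ crossed}\}\big)_{e}$ together with $(L^w_-,L^w_+)$. There is one subtlety in the case where the walk from a child exits $G^w_k$ and re-enters it. Such a walk would have to pass through $a^w_k$ or $b^w_k$ anyway, so nothing is lost, and I will state this explicitly.

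We now prove parts a) and b). Condition on $\mathcal{L}$, which amounts to conditioning on an event in $\sigma\big(L^w_\circ : w \in \Edge_1^j\big)$. Write $\mathcal{L}$ as a union of atoms $\{(L^w_\circ)_w = (\epsilon_w)_w\}$. On each atom the values $L^w_\pm$ are constant and equal to $l^w_\pm$, and the event factorizes as $\bigcap_w\{L^w_\circ=\epsilon_w\}$. The sibling vectors $\big(\mathbf 1\{G_k^{we}\text{ crossed}\}\big)_e$ are independent across $w$, and the event $\{L^w_\circ=\epsilon_w\}$ is measurable with respect to the vector of its own family, since $L^w_\circ$ is obtained by composing $f$-type crossing rules. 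Therefore, conditioned on an atom, the families are independent, and the law of the family of $w_0$ depends only on $\epsilon_{w_0}$. By consistency, $\epsilon_{w_0}=l^{w_0}_\circ$ is the same on all atoms. Combining this with the fact that $\Phi$ depends only on $l^{w_0}_\pm$, we obtain \eqref{eq:offspring_markov}, and the mixture over atoms preserves the independence in \eqref{eq:independent_families}, because the conditional law of each family is the same on every atom.

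We now turn to part c). The conditional law computed above is the law of $\Phi(X,l^{w_0}_\pm)$, where $X$ is a vector of i.i.d.\ Bernoulli$(f_{k-j-1}(p))$ entries indexed by $\Edge_1$, conditioned on the crossing of $G_1$ by $X$ being equal to $l^{w_0}_\circ$. At $p_\star$ we have $f_{k-j-1}(p_\star)=p_\star=f_{k-j}(p_\star)$ by \eqref{e:cross_at_p_star}. The same description applies in $G_{k-j}$ with $w_0$ replaced by the root $\varnothing$, where the children are Bernoulli$(f_{k-j-1}(p_\star))$, and then \eqref{eq:offspring_rerooting} follows. One point needs care: for the root of $G_{k-j}$ we always have $L^\varnothing_\pm=1$, so the right-hand side only makes sense when $l^{w_0}_\pm=(1,1)$. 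I would therefore interpret the statement as referring to the same conditional mechanism $\Phi(\cdot,l_\pm)$, with the root's outer labels prescribed to be $l^{w_0}_\pm$. The main obstacle is to state rigorously the claim that labels are functions of crossing indicators, that is, to identify the function $\Phi$. I would prove it by induction on $j$, using the fact that $G_k^w$ meets its complement only at $\{a_k^w,b_k^w\}$.
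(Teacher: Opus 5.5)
Your proposal is correct and follows essentially the same route as the paper: given the tile crossing indicators at level $j$, the configurations inside the $G_k^w$ are independent. Each child label is a fixed function of the sibling crossings and $l^w_\pm$, and part c) reduces to $f_{k-j-1}(p_\star)=p_\star$. Your remark that $L^\varnothing_\pm\equiv(1,1)$ is a legitimate point that the paper's brief proof passes over: it means the right-hand side of \eqref{eq:offspring_rerooting} only makes literal sense when $l^{w_0}_\pm=(1,1)$, and your reading via $\Phi(\cdot,l_\pm)$ is exactly the homogeneity used later for the branching process.
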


\begin{remark}
  \label{r:cousins}
  It is important to notice that:
  \begin{enumerate}[\quad a)]
  \item Given $w \in \Edge_{1}^{j}$, the vector appearing in \eqref{eq:independent_families} labels the children of $w$.
    It is not expected that $L^{we}$ be independent of $L^{we'}$, but rather the vector $(L^{we_1}, \dots, L^{we_{|\Edge_{1}|}})$ is independent of $(L^{w'e_1}, \dots, L^{w'e_{|\Edge_{1}|}})$, given the label of the previous scale 	$w' \neq w$.
  \item The identity in \cref{eq:offspring_markov} is reminiscent of the domain Markov property.
    Intuitively speaking, it says that knowing the label of $w_0$ is enough to learn the distribution of its offspring and there is no more information to be gained by learning other labels at the same generation as that of $w_0$.
   \item Finally, Equation~\eqref{eq:offspring_rerooting} states that the MTBP introduced above is homogeneous at the critical point $p_{\star}$, strengthening the relation $f(p_{\star}) = p_{\star}$ obtained in Theorem~\ref{t:crossing}.
  \end{enumerate}
\end{remark}

\begin{proof}[Proof of \cref{l:independent_cousins}]
Start by observing that, given the collection of labels in some scale $j<k$, the percolation configuration at scale $k$ can be recovered by independently sampling, in each graph $G^{w}_{k}$ a configuration at density $p$ conditioned on the crossing event $C_{k-j}$ or its complement, a choice that is determined by the last entry of the label $l^{w}$, for each subgraph. Furthermore, in order to determine the labels of a word $we \in \Edge_{1}^{j+1}$, it suffices to observe the label $l^{\omega}$ and the crossings of the graphs $G^{we'}_{k}$, for $e' \in \Edge_{1}$. These two observations already imply statements~\eqref{eq:independent_families} and~\eqref{eq:offspring_markov}.

We are left with verifying~\eqref{eq:offspring_rerooting}. This is a consequence of the fact that, at the critical point $p_{\star}$, crossings of graphs $G^{we'}_{k}$ happen with probability $p_{\star}$.
\end{proof}

From the lemma above we can now conclude the proof of Theorem~\ref{t:volume_fluctuations}.
\begin{proof}[Proof of~\cref{t:volume_fluctuations}]
  From Lemma~\ref{l:independent_cousins}, the MTBP~\eqref{eq:mtbp_alive} is homogeneous through the generations, since we are considering the process at $p_{\star}$.
  The proof of the theorem will follow from the Kesten-Stigum Theorem for supercritical multitype branching processes.
  
  Let $M$ denote the matrix of expected offspring sizes for types $l$ that are not $000$ and $001$, that is, $M(l, l')$ is the expected number of individuals with type $l'$ originating from a parent of type $l$ (with respect to $\mathbb{E}^{1}_{p}$, as the process is homogeneous).
  Let us first notice that the matrix $M$ is irreducible, as $M^2$ has only positive entries. In particular, we can apply Perron-Frobenius Theorem to the matrix $M$. Denote by $\rho$ the largest eigenvalue of $M$.
  Our next step is to prove that $\rho > 1$.
  
  Given a label $\ell \in \{0, 1\}^3 \setminus \{ 000, 001\}$, we write $Z_k^\ell$ for the number of nodes at generation $j=k$ with label $\ell$, and by $\vec{Z}_{k}$ the vector counting the number of individuals of each type in generation $k$.
  
  Notice that
  \begin{equation}
  \E \big( \vec{Z}_{k+1} \big) = M \E \big( \vec{Z}_{k} \big),
  \end{equation}
  for all $k \geq 0$. In particular,
  \begin{equation}
  \big\lVert \E \big( \vec{Z}_{k} \big) \big\rVert \leq \rho^{k} \big\lVert \E \big( \vec{Z}_{0} \big) \big\rVert.
  \end{equation}
  
  Since
  \begin{equation}\label{eq:expression_C_k}
    |\mathcal{C}_{\{a_k, b_k\}}|
    = Z_k^{100} + Z_k^{101} + Z_k^{010} + Z_k^{011} + Z_k^{110} + Z_k^{111},
  \end{equation}
  it follows from Theorem~\ref{th:volume-of-critical-cluster} that, for all $k \geq 1$,
  \begin{equation}
   \uc{c:volume_low} d_f^k
    \leq \mathbb{E}_{p_\star}^{k} \big[ |\mathcal{C}_{\{a_k, b_k\}}| \big] \leq \sqrt{6} \big\lVert \E \big( \vec{Z}_{k} \big) \big\rVert \leq \sqrt{6} \rho^{k} \big\lVert \E \big( \vec{Z}_{0} \big) \big\rVert.
  \end{equation}
  In particular, $\rho \geq d_{f}>1$.
  
  This allows us to apply Kesten-Stigum Theorem for multitype branching processes (see for example~\cite{ks66}) and conclude that
  \begin{equation}
  \frac{\vec{Z}_{k}}{\rho^{k}} \underset{k}{\Longrightarrow} \tilde{\mu}_{\infty},
  \end{equation}
for some non-trivial distribution $\tilde{\nu}$. Furthermore, $\tilde{\mu}_{\infty}(0) = 0$, as the process never dies out and the offspring distribution is bounded from above by $|\Edge_{1}|$.
  
  Once again using~\eqref{eq:expression_C_k}, we get
  \begin{equation}
  \frac{|\mathcal{C}_{\{a_k, b_k\}}|}{\rho^{k}} \underset{k}{\Longrightarrow} \mu_{\infty},
  \end{equation}
  for some distribution $\nu$ such that $\mu_{\infty}(0)=0$.
  
  It remains to prove that $\rho = d_f$. We already obtained $\rho \geq d_{f}$ in the above. For the reverse inequality, notice that~\cref{eq:expected_volume_critical} once again implies
  \begin{equation}
  0 < \int x \d \mu_{\infty}(x) \leq \liminf \mathbb{E}_{p_{\star}}^{k} \bigg[ \frac{|\mathcal{C}_{\{a_k, b_k\}}|}{\rho^{k}} \bigg] \leq \liminf \uc{c:volume_high} \frac{d_f^k}{\rho^{k}},
  \end{equation}
from which follows that $d_f \geq \rho$. This implies $d_f = \rho$ and concludes the proof.
\end{proof}

\section{Scaling Relations}
\label{s:exponent_relations}

Several critical exponents are expected to be related by what is now known as ``scaling relations.'' For planar percolation, many of these relations have been proved by Kesten under the assumption that the exponents exist, see~\cite{kesten1987scaling}.
For high dimensional percolation, the exponents take their mean-field values, see~\cite{hara1990mean}, implying the existence of such relations. In this section, we prove a relation between the percolation exponent, the one-arm explonent and the correlation length.

Recall the definition of the percolation function $\theta^{\bar{G}_{\infty}}$ immediately above~\eqref{eq:p_c} and set
\begin{equation}
\label{eq:theta}
\theta(p) = \E \big[ \theta^{\bar{G}_{\infty}}(p) \big],
\end{equation}
where $\E$ denotes the expectation with respect to the realization of the random limiting graph $\bar{G}_{\infty}$.
\nc{c:beta_and_relations}
\begin{theorem}
  \label{th:beta_and_relations}
  There exists $ \uc{c:beta_and_relations} > 0$ such that, for $p \in (p_{*}, 1)$,
  \begin{equation}
    \label{e:bound_theta_th}
    \uc{c:beta_and_relations}^{-1} (p - p_\star)^\beta
    \leq
    \theta(p)
    \leq
    \uc{c:beta_and_relations} (p - p_\star)^\beta,
  \end{equation}
  where
  \begin{equation}
    \label{eq:exponent_relation}
    \beta = \log_\zeta \frac{|\Edge_{1}|}{d_f} = \alpha_1 \nu.
  \end{equation}
\end{theorem}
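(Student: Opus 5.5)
First, I would reduce $\theta(p)$ to a finite-volume quantity. By Proposition~\ref{p:benjamini_schramm_marks}, the root $E_\infty$ sits in a nested sequence $G_k^{w_k}$ whose address digits are i.i.d.\ uniform. Hence $E_\infty$ is at the same position inside $G_k^{w_k}$ as a uniform edge $E_k$ of $G_k$. This gives the two-sided comparison
\begin{equation*}
  \mathbb{P}^k_p\big(E_k \leftrightarrow \{a_k,b_k\}\big)\, q_k(p) \;\lesssim\; \theta(p) \;\le\; \mathbb{P}^k_p\big(E_k \leftrightarrow \{a_k,b_k\}\big),
\end{equation*}
where $q_k(p)$ is a lower bound, uniform in the environment, for the probability that $\{a_k^{w_k},b_k^{w_k}\}$ connects to infinity off $G_k^{w_k}$. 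The upper bound holds because reaching infinity forces exiting $G_k^{w_k}$ through its endpoints. For the lower bound, I would use FKG together with the product estimate \eqref{eq:lower_bound_percolate}, rescaled: once $f_k(p)$ is bounded away from $p_\star$ from above, the higher scales are crossed with probability bounded below, doubly exponentially close to one. I would choose $k=k(p)$ to be the exit time of $f_j(p)$ from the Hartman--Grobman neighbourhood of $p_\star$. By \eqref{eq:k_p_with_hg} and $h'(p_\star)=1$, this gives $\zeta^{k(p)}\asymp (p-p_\star)^{-1}$ with $q_{k(p)}(p)\ge c>0$. It then suffices to show
\begin{equation*}
  \mathbb{P}^{k}_p\big(E_{k}\leftrightarrow\{a_{k},b_{k}\}\big)\asymp (d_f/|\Edge_1|)^{k}\quad\text{for } k=k(p),
\end{equation*}
since $(|\Edge_1|/d_f)^{-k(p)}=\zeta^{-k(p)\log_\zeta(|\Edge_1|/d_f)}\asymp (p-p_\star)^{\beta}$.

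For this comparison I would use the off-critical dynamical system \eqref{eq:x_ab_evolution}: $\mathbb{E}^k_p|\mathcal{C}_{\{a_k,b_k\}}| = (1,1,1)\prod_{j=0}^{k-1}M(f_j(p))\,(0,0,1)^\intercal$, with the product ordered so that $M(f_{k-1}(p))$ is leftmost. The claim is that this product is comparable to $M(p_\star)^k$ uniformly in $p$, with constants independent of $p$. Linearizing via Hartman--Grobman, $|f_j(p)-p_\star|\asymp \zeta^{j-k}\delta$ for $j\le k(p)$, where $\delta$ is the size of the neighbourhood. Since $M$ is polynomial, hence Lipschitz, $\|M(f_j(p))-M(p_\star)\|\lesssim \zeta^{j-k}$, which is summable as a geometric series going backward from $k$.

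I expect the main obstacle to be the infinite-product-of-matrices estimate. The step I would try first is to conjugate by the positive Perron eigenvectors of $M(p_\star)$, which $M(p_\star)$ has since it is irreducible with positive square. This bounds $\prod_j (I+E_j)$ with $\sum\|E_j\|<\infty$ in a cone-preserving way. Positivity of all entries of $M(q)^2$ for $q$ near $p_\star$ gives Birkhoff contraction of the projective metric, so the product stays within bounded distortion of $d_f^k$ times a rank-one projector.

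Finally, for $p$ away from $p_\star$ (say $p\ge p_\star+\delta$), both sides of \eqref{e:bound_theta_th} are bounded above and below by positive constants, by Theorem~\ref{t:p_c_constant} and the uniform lower bound \eqref{eq:lower_bound_percolate}. The identity $\beta=\alpha_1\nu$ is then arithmetic:
\begin{equation*}
  \alpha_1\nu=\frac{\log(|\Edge_1|/d_f)}{\log \d_1(a_1,b_1)}\cdot\frac{\log \d_1(a_1,b_1)}{\log\zeta}=\log_\zeta\frac{|\Edge_1|}{d_f}.
\end{equation*}
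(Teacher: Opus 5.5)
Your proposal is essentially the paper's proof. The paper also stops at the exit time $k(p)$ of $f_j(p)$ from a fixed neighbourhood of $p_\star$ (so $\zeta^{k(p)}\asymp (p-p_\star)^{-1}$). It absorbs all larger scales into a factor bounded above and below, where its tail product $J(a(p))$ plays the role of your $q_{k(p)}(p)$. It then shows that $\prod_{j<k(p)} M(f_j(p))$ is entrywise comparable to $d_f^{k(p)}$, because $M(f_j(p))-M(p_\star)$ decays geometrically backward from $j=k(p)$. The only difference there is that the paper uses Artzrouni's perturbation bounds in a Perron-weighted norm where you propose Birkhoff contraction; either works, provided the boundedly many factors with perturbation of order one are treated separately by compactness, as the paper does.

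One step fails as you state it. If $G_1$ has a bridge, a tile sitting on a bridge of its parent has one endpoint that can never reach infinity outside the tile. So there is no lower bound, uniform in the environment, for the probability that both endpoints connect to infinity off the tile, and that is what your lower bound needs. The repair uses only facts you already have:
\begin{itemize}
\item Start from $\theta^{\bar{G}_\infty}(p)\ge \tfrac12\sum_{x\in\{a_k^{w_k},b_k^{w_k}\}}\mathbb{P}(E_\infty\leftrightarrow x \text{ inside})\,\mathbb{P}(x\leftrightarrow\infty\text{ outside})$.
\item Average over the environment. The inner address digits, which fix the position of $E_\infty$ in the tile, are independent of the outer digits, which fix the surroundings of the tile.
\item Averaged over the outer digits, each endpoint reaches infinity outside the tile with probability bounded below. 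For instance, this holds whenever the tile sits on a non-bridge edge of its parent, which happens with probability at least $1/|\Edge_1|$.
\end{itemize}
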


\begin{proof}
  Recall the parameters $x_k^a, x_k^b$, and $x_k^{ab}$ defined in~\eqref{eq:three_expectations}, which will be denoted by $x = (x^{a}, x^{b}, x^{ab})$ in this section, and consider the following dynamical system, related to \cref{eq:x_ab_evolution}:
  \begin{equation}
    \label{eq:x_ab_evolution_2}
    \begin{split}
      & F: [0, 1] \times \mathbb{R}_+^3 \to [0, 1] \times \mathbb{R}_+^3
        \qquad  \text{given by} \\
      & F(p, x) = \big( f(p), \tfrac{1}{|\Edge_{1}|} M(p) x \big).
    \end{split}
  \end{equation}
  This system will be central to the understanding of the critical exponent $\beta$.
  We start by making a few observations about $F$.
  First, the octants $\{0\} \times \mathbb{R}_+^3$, $\{p_\star\} \times \mathbb{R}_+^3$ and $\{1\} \times \mathbb{R}_+^3$ are invariant under $F$.
  Also,
  \begin{equation}
    \label{eq:M_0_M_1}
    M(0) =
    \begin{pmatrix}
      \deg_1^{\mathrm{out}}(a_1) & \deg_1^{\mathrm{in}}(a_1) & \deg_1(a_1) \\
      \deg_1^{\mathrm{out}}(b_1) & \deg_1^{\mathrm{in}}(b_1) & \deg_1(b_1) \\
      0                          & 0                         & 0
    \end{pmatrix},
    \quad
    M(1) =
    \begin{pmatrix}
      0       & 0       & 0 \\
      0       & 0       & 0 \\
      |\Edge_{1}| & |\Edge_{1}| & |\Edge_{1}|
    \end{pmatrix},
  \end{equation}
  and the principal eigenvalue of $M(0)$ is strictly smaller than $\deg_1(a_1) + \deg_1(b_1) \leq |\Edge_{1}|$.
  Meanwhile, the principal eigenvalue of $M(p_\star)$ is $d_f$, which by~\cref{th:volume-of-critical-cluster} is also strictly smaller than $|\Edge_{1}|$.
  Therefore, the fixed points of $F$ are precisely:
  \begin{display}
    \label{eq:F_fixed} $\big( 0, (0, 0, 0) \big)$, $\big( p_\star, (0, 0, 0) \big)$ and\\
    all the points in the line $\{(1, (a, a, a)) : a \geq 0\}$.
  \end{display}
  We will focus our attention mainly on $\big( p_\star, (0, 0, 0) \big)$.

  To understand the exponent $\beta$, we analyze the infinite product
  \begin{equation}
    \label{eq:J_p}
    J(p) = \prod_{k = 0}^\infty \frac{1}{|\Edge_{1}|} M \big( f_{k} (p) \big),
  \end{equation}
  since, as we argue below, for $p > p_{\star}$, $\theta(p) = (0,0,1) J(p)^{\intercal}(1, 1, 1)^\intercal$.
  
  In order to prove the statement above, define $y(p) = (y^{a}(p), y^{b}(p), y^{ab}(p))$, similarly to~\eqref{eq:three_expectations}, as
  \begin{equation}
  \label{eq:three_expectations_2}
  \begin{split}
    y^a(p) & = \PP_{p}^{\bar{G}_{\infty}} \Big( E^{-} \overset{\bar{G}_{\infty} \setminus \{E\}}{\longleftrightarrow} \infty \Big), \\
    y^b(p) & = \PP_{p}^{\bar{G}_{\infty}} \Big( E^{+} \overset{\bar{G}_{\infty} \setminus \{E\}}{\longleftrightarrow} \infty \Big), \text{ and} \\
    y^{ab}(p) & = \PP_{p}^{\bar{G}_{\infty}} \Big( E \overset{\bar{G}_{\infty} \setminus \{E\}}{\longleftrightarrow} \infty \Big).
    \end{split}
\end{equation}
  The recursive nature of the graph $\bar{G}_{\infty}$ (see Proposition~\ref{p:benjamini_schramm_marks}) implies that
  \begin{equation}
  y(p)^{\intercal} = \frac{1}{|\Edge_{1}|} M(p) y(f(p))^{\intercal}.
  \end{equation}
  Furthermore, immediately from the definition one gets
  \begin{equation}
  \theta(p) = \E \big[ y^{ab}(p) \big] = (0,0,1) \frac{1}{|\Edge_{1}|} M(p) \E \big[ y(f(p))^{\intercal} \big].
  \end{equation}
  Repeated iterations of the equation above, together with the fact that, for all $p>p_{\star}$, $\lim_{k} \E \big( y(f_{k}(p)) \big) = (1,1,1)$, yields
  \begin{equation}
  \theta(p) = (0,0,1) J(p)(1,1,1)^{\intercal},
  \end{equation}
  proving our claim.
  
  Let us now go back to~\eqref{eq:J_p}. We start by fixing some value $p > p_\star$ and studying the sequence of matrices
  \begin{equation}
    M \big( f_{k} (p) \big) = M(1) + \Delta_k(p), \text{ where }
    \Delta_k(p) = M \big( f_{k} (p) \big) - M(1).
  \end{equation}
  Observe from~\cref{th:off-critical-decay} that, for any $p_{0} \in (p_{\star}, 1)$, there exist $c,c'>0$ such that $\lVert \Delta_k(p) \rVert \leq c e^{-c' \cut(G_{1})^k}$, for all $p \geq p_{0}$ where $\lVert \cdot \rVert$ denotes the operator norm.
  In particular, $\sum_{k} \lVert \Delta_k(p) \rVert < \infty$. By further increasing the value of $p_{0}$, we can assume
  \begin{equation}
    \label{eq:sup_Delta}
    \sup_{p \geq p_0} \sum_{k} \lVert \Delta_k(p) \rVert < 1.
  \end{equation}

 To make the product in~\eqref{eq:J_p} rigorous, we will use classical results about convergence of matrix products.
  First, observe that:
  \begin{enumerate}[\quad a)]
  \item since $\frac{1}{|\mathcal{E}_{1}|} M(1)$ is idempotent, the product defining $J(p)$ converges for every $p > p_\star$, see for instance Theorem~2.1 of \cite{ARTZROUNI198611};
  \item for $p \geq p_0$, the function $J(p)$ is continuous with respect to the operator norm, see Corollary~2.1 of \cite{ARTZROUNI198611};
  \item consequently, $J(p)$ converges to $\frac{1}{|\mathcal{E}_{1}|} M(1)$ as $p$ goes to one;
  \item moreover, there exists $p_1 > p_0$, such that, for every $p \geq p_1$, each of the entries of $J(p)$ are larger than the corresponding one in $\frac{1}{2|\mathcal{E}_{1}|} M(1)$ and smaller than $2$.
  \end{enumerate}

  We now observe that for every $p \in (p_\star, p_0)$, there exists a single $k(p)$ such that
  \begin{equation}
    \label{eq:m_of_p}
    f_{k}(p) \in [p_1, f(p_1)).
  \end{equation}
  In fact, the relation $h \circ f = \zeta \cdot h$ tells us that
  \begin{equation}
    \label{eq:k_of_p}
    k(p) = \inf\{k : \zeta^k \cdot h(p) \geq h(p_1)\}
    = \lceil \log_\zeta(h(p_1)) - \log_\zeta(h(p)) \rceil.
  \end{equation}
  We now split the evolution of $\big( p, (1, 1, 1) \big)$ into the piece before and after $k(p)$.
  For this we define $\big( a(p), (x^{a}(p), x^{b}(p), x^{ab}(p)) \big) := F^{(k(p))} \big(p, (1, 1, 1) \big)$.
  Then, see that $\theta(p)$ is given by
  \begin{equation}
    \label{eq:Jp_split}
    \begin{split}
      (0,0,1) J(p) (1, 1, 1)^\intercal
      & = (0,0,1) J \big( a(p) \big) \cdot \bigg( \prod_{k = 0}^{k(p)}
        \frac{1}{|\mathcal{E}_{1}|} M \big( f_{k} (p) \big)^{\intercal} \bigg) (0, 0, 1)^\intercal\\
      & = (0,0,1) J \big( a(p) \big) (x^{a}(p), x^{b}(p), x^{ab}(p))^\intercal.
    \end{split}
  \end{equation}
  Recalling that $a(p) \in [p_1, f(p_1))$, we can use the above definition of $p_1$ to conclude that
  \begin{equation}
    \label{eq:bound_theta}
    \frac{1}{2} \big( x^{a}(p) + x^{b}(p) + x^{ab}(p) \big) \leq \theta(p) \leq 6 \big( x^{a}(p) + x^{b}(p) + x^{ab}(p) \big).
  \end{equation}
  Therefore, in order to understand the behavior of $\theta(p)$ as $p$ approaches $p_\star$, it is enough to understand the asymptotics of $x^{a}(p), x^{b}(p)$, and $x^{ab}(p)$.

  From \cref{eq:Jp_split}, we see that $(x^{a}(p), x^{b}(p), x^{ab}(p))$ equals
  \begin{equation}
    \label{eq:compare_with_Lp}
    \begin{split}
      \bigg( \prod_{k = 0}^{k(p)}
      \frac{1}{|\Edge_{1}|} M \big( f_{k} (p) \big) \bigg) (1, 1, 1)^\intercal
      & = \bigg(  (1, 1, 1) \bigg( \prod_{k = 0}^{k(p)}
        \frac{1}{|\Edge_{1}|} M \big( f_{k} (p) \big) \bigg)^\intercal \bigg)^\intercal\\
      & = \bigg(  (1, 1, 1) \bigg( \prod_{k = 0}^{k(p)}
        \frac{1}{|\Edge_{1}|} M \big( f_{k}^{-1} \big( a(p) \big) \big)^{\intercal} \bigg) \bigg)^\intercal\\
      & = \Big( \frac{d_f}{|\mathcal{E}_{1}|} \Big)^{k(p)} \big( (1, 1, 1) \cdot L(p) \big)^\intercal,
    \end{split}
  \end{equation}
  where
  \begin{equation}
    \label{eq:Lp}
    L(p) = \bigg( \prod_{k = 0}^{k(p)}
    \frac{1}{d_f} M \big( f_{k}^{-1} \big( a(p) \big) \big)^{\intercal} \bigg).
  \end{equation}
  In order to conclude the proof, it suffices to prove that $L(p)$ has coordinates bounded from above and uniformly positive for $p \in [p_{\star}, p_{0})$. In fact, we will verify that
  \begin{equation}
    \label{eq:Lnp}
    L(n, p) = \bigg( \prod_{k = 0}^{n}
    \frac{1}{d_f} M \big( f_{k}^{-1} \big( p \big) \big)^{\intercal} \bigg)
  \end{equation}
has uniformly positive entries for all $n \geq 1$ and all $p \in [p_{\star}, 1-\bar{\delta})$, for any $\bar{\delta}>0$.

Our first step in proving the statement above is establishing that it holds for $p=p_{\star}$.
Observe that the matrix $M_\star := (1/d_f) M(p_\star)^{\intercal}$ is a Perron-Frobenius matrix with principal eigenvalue one.
  Moreover, its corresponding left eigenvector $v$ and right eigenvector $w$ have positive entries and $\lim_{k} L(k, p_{\star}) = \lim_k M_\star^k = w v^{\intercal}$ (where $v$ and $w$ have been normalized in a way that the sum of the entries of $v$ is one and $v^\intercal w = 1$). This yields that $\lim_{k} L(k, p_{\star})$ has positive entries. Since the entries of $M_{\star}$ are positive and bounded, we conclude that there exist $0< a < b$ such that $L(n, p_{\star})_{i,j} = \big(M_{\star}^{n}\big)_{i,j} \in [a,b]$, for all $n$ and all $i,j =1,2,3$.
  
  Fix now $\varepsilon>0$ such that, if $M$ is a matrix with $\lVert M-M_{\star}^{n} \rVert < \varepsilon$, then $M_{i,j} \in \big[\frac{a}{2}, 2b \big]$, for all $i,j=1,2,3$.
  
In order to extend the the above to the whole interval $p \in [p_{\star}, 1-\bar{\delta})$, we start by defining
\begin{equation}
g_{n}(A_{1}, \dots, A_{n}) = \prod_{k = 0}^{n}
    \frac{1}{d_f} \big( M(p_\star)^{\intercal} + A_{k} \big).
\end{equation}
 We will consider the alternative (but equivalent) norm over the space of matrices, given by $\lVert M \rVert_v := \sup_{x \neq 0} |Mx|_v/|x|_v$, where $|x|_v := \max_i |x_i|/v_i$.
  This norm satisfies $\lVert AB \rVert_v \leq \lVert A \rVert_v \lVert B \rVert_v$ (since $|ABx|_v \leq \lVert A \rVert_v |Bx|_v \leq \lVert A \rVert_v \lVert B \rVert_v |x|_v$).
  Moreover, $\lVert M_\star \rVert_v = 1$ (the suppremum being attained at the Perron-Frobenius eigenvector).
  
  The two properties above are required in order to apply Corollary~2.1 of \cite{ARTZROUNI198611} for the fuctions $g_{n}$ and implies that this collection of functions is equicontinuous. In particular, for $\varepsilon>0$ as above, there exists $\delta>0$ such that, for all $n \in \N$,
  \begin{equation}
  \text{if } \sum_{k=1}^{n} \lVert A_{k} \rVert_v < \delta, \quad \text{then } \lVert g_{n}(A_{1}, \dots A_{n}) - M_{\star}^{n} \rVert < \varepsilon.
  \end{equation}
  
Define now
  \begin{equation}
    \Gamma_k(p) = M \big( f_{k}^{-1}(p) \big)^{\intercal} - M(p_\star)^{\intercal},
  \end{equation}
  And notice that, by writing $f= h^{-1} \circ \zeta \circ h^{-1}$, using the facts that $h$ and $h^{-1}$ are Lipschitz in compact intervals and that $p \mapsto M(p)$ is continuous, we obtain that there exist positive constants $c$ and $c'$ such that $\lVert \Gamma_k(p) \rVert_{v} \leq c e^{-c' k}$, for all $k \geq 0$ and all $p \in (p_{\star}, 1-\delta)$. In particular, if $k_{0}$ is chosen large enough, then
  \begin{equation}
    \sum_{k=k_{0}}^{+\infty} \lVert \Gamma_{k}(p) \rVert_v < \delta, \quad \text{for all } p \in [p_{\star}, 1-\bar{\delta}).
  \end{equation}
  
The choice of $\varepsilon$ and equicontinuity of the functions $g_{n}$ now imply that
  \begin{equation}
    \bigg(\prod_{k=k_{0}}^{n} \frac{1}{d_{f}} M \big( f_{k}^{-1} \big( p \big) \big)^{\intercal} \bigg)_{i,j} \in \Big[ \frac{a}{2}, 2b \Big],
  \end{equation}
for any $i,j=1,2,3$, $n \geq k_{0}$, and $p \in [p_{\star}, 1-\bar{\delta})$.

By continuity, the entries of $\bigg(\prod_{k=1}^{k_{0}-1} \frac{1}{d_{f}} M \big( f_{k}^{-1} \big( p \big) \big)^{\intercal} \bigg)_{i,j}$ are bounded from above and below uniformly for $p \in [p_{\star}, 1-\bar{\delta})$. These two facts imply that the same holds for the matrices $L(n,p)$ and concludes the proof of our claim.

From the above, \cref{eq:bound_theta}, and \cref{eq:compare_with_Lp}, we conclude that
  \begin{equation}
    \label{eq:2}
    c \Big( \frac{d_f}{|\mathcal{E}_{1}|} \Big)^{k(p)} \leq \theta(p) \leq c' \Big( \frac{d_f}{|\mathcal{E}_{1}|} \Big)^{k(p)},
  \end{equation}
which implies the theorem using \cref{eq:k_of_p}.
\end{proof}

\section{Near-critical scaling}
\label{s:near-critical}

For two-dimensional site percolation on the triangular lattice, Garban, Pete, and Schramm~\cite{garban2018scaling} obtained the near-critical scaling limit of the model.
Here we obtain results that are analogous to those for percolation on hierarchical lattices.

Recall $\zeta>1$ denotes the derivative of the crossing function $f$ at the fixed point $p_{\star}$ (see Equation~\eqref{eq:lower-bound-zeta}).
\begin{theorem}
  \label{th:near-critical-scaling}
  For any $\lambda \in \mathbb{R}$,
  \begin{equation}
    \label{eq:near-critical-scaling}
    f_{k} \Big( p_\star + \frac{\lambda}{\zeta^k} \Big) \to h^{-1}(\lambda),
  \end{equation}
  where the diffeomorphism $h: (0, 1) \to \mathbb{R}$ is characterized as the unique linear conjugation of $f$ (that is, $h \circ f = \zeta \cdot h$) such that $h'(p_\star) = 1$.

  \nc{c:tail_h_1}
  \nc{c:tail_h_2}
  Moreover, there exist constants $\uc{c:tail_h_1}, \uc{c:tail_h_2} > 0$ such that
  \begin{equation}
    \label{eq:tail_h_1}
    \exp \Big( -\uc{c:tail_h_1} |\lambda|^\nu \Big)
    \leq h^{-1}(\lambda)
    \leq \exp \Big( -\uc{c:tail_h_1}^{-1} |\lambda|^\nu \Big),
    \text{ as $\lambda \to -\infty$}
  \end{equation}
  and
  \begin{equation}
    \label{eq:tail_h_2}
    1 - \exp \Big( -\uc{c:tail_h_2} \lambda^\mu \Big)
    \leq h^{-1}(\lambda)
    \leq 1 - \exp \Big( -\uc{c:tail_h_2}^{-1} \lambda^\mu \Big),
    \text{ as $\lambda \to \infty$},
  \end{equation}
  where the exponents $\mu$ and $\nu$ are the ones appearing in \cref{th:off-critical-decay}.
\end{theorem}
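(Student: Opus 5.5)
The plan is to build a global linearizing conjugation for $f$ on $(0,1)$ and read both statements off from it.

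\emph{Global conjugacy.} By Theorem~\ref{t:crossing}, $f$ is an increasing polynomial on $[0,1]$ (Russo's formula gives $f'>0$ on $(0,1)$). Its only fixed points are $0$, $p_\star$ and $1$, and $\zeta = f'(p_\star)>1$. Hartman--Grobman together with Proposition~\ref{p:diffeo_conjugation} gives a local increasing diffeomorphism $h$ near $p_\star$ with $h\circ f = \zeta h$ and $h'(p_\star)=1$. I will extend it to all of $(0,1)$ by the functional equation
\begin{equation*}
h(p) := \zeta^{k}\, h\big(f^{-k}(p)\big),
\end{equation*}
where $k$ is chosen so that $f^{-k}(p)$ lies in the local neighbourhood. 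This is possible because every $p\in(0,1)$ has backward orbit converging to $p_\star$: $f^{-1}$ contracts towards $p_\star$ on $(0,1)$. The definition is consistent because of the functional equation, and $h$ is a diffeomorphism because $f$ is a diffeomorphism of $(0,1)$.

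Surjectivity onto $\mathbb{R}$ follows because forward orbits leave every compact set towards $0$ or $1$, so $h\to\pm\infty$ at the endpoints. Uniqueness follows because any two such conjugacies $h_1,h_2$ give $h_1\circ h_2^{-1}$ commuting with multiplication by $\zeta$ and having derivative one at $0$. Hence $\phi(x) = \zeta^{-k}\phi(\zeta^k x)$, and since $\phi(y)/y \to 1$ as $y\to 0$, letting $k\to-\infty$ gives $\phi(x)=x$.

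\emph{Near-critical limit.} We have
\begin{equation*}
f_k\big(p_\star+\lambda\zeta^{-k}\big) = h^{-1}\Big(\zeta^k\, h\big(p_\star+\lambda\zeta^{-k}\big)\Big).
\end{equation*}
Since $h$ is differentiable at $p_\star$ with $h(p_\star)=0$ and $h'(p_\star)=1$, the inner term is $\zeta^k\big(\lambda\zeta^{-k}+o(\zeta^{-k})\big)\to\lambda$. Continuity of $h^{-1}$ then gives \eqref{eq:near-critical-scaling}. Note that $p_\star+\lambda\zeta^{-k}\in(0,1)$ for $k$ large.

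\emph{Tails.} Take $\lambda\to-\infty$ and set $q:=h^{-1}(\lambda)$. Fix $a$ small and a fundamental domain $D=[f(a),a]$. Choose $m=m(\lambda)\ge 0$ with $f^{-m}(q)\in D$, so that $\zeta^{-m}\lambda = h(f^{-m}(q))\in h(D)$, a compact subset of $(-\infty,0)$. Hence $m=\log_\zeta|\lambda|+O(1)$. Then $q=f_m(x)$ with $x\in D$.

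The telescoping expansion from the proof of Theorem~\ref{th:off-critical-decay}, with $r=\d_1(a_1,b_1)$, gives
\begin{equation*}
\log f_m(x) = r^m\Big(\log x+\sum_{j<m} r^{-j-1}\log F(f_j(x))\Big) = -r^m\,\xi_m(x),
\end{equation*}
where $\xi_m(x)$ is bounded above and below by positive constants uniformly over $x\in D$ and $m\ge 0$. This uses that $F$ is bounded away from $0$ and $\infty$ on $[0,a]$, and that the sum is bounded below away from zero once $a$ is small enough that $\log a+\sum_j r^{-j-1}\sup\log F<0$. Since $r^m = r^{O(1)}|\lambda|^{\log_\zeta r} = \Theta(|\lambda|^\nu)$, this gives \eqref{eq:tail_h_1}.

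The case $\lambda\to+\infty$ is symmetric. Apply the same argument to $1-f$ near $1$, writing $1-f(p) = \tilde F(1-p)\,(1-p)^{\cut(G_1)}$; this gives the exponent $\mu=\log_\zeta\cut(G_1)$ and \eqref{eq:tail_h_2}.

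\emph{Main obstacle.} The delicate step is the global extension and the identification of $h$ as a diffeomorphism with $h'(p_\star)=1$ that is unique. This requires differentiability of the local Hartman--Grobman conjugacy at $p_\star$, which I would take from Proposition~\ref{p:diffeo_conjugation}, and smoothness on the rest of $(0,1)$, which transfers through the functional equation.
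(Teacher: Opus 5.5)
Your proposal is correct and follows essentially the paper's proof: you use the same global conjugacy $h(x)=\zeta^{n}h(f^{-n}(x))$ (which the paper builds in its appendix exactly as you do) and the same first-order expansion $h(p_\star+\varepsilon)=\varepsilon+o(\varepsilon)$ for the limit. For the tails you likewise write $h^{-1}(\lambda)$ as an iterate $f_m$ with $m=\log_\zeta|\lambda|+O(1)$ and invoke the telescoping decay $\log f_m\asymp -r^m$ from Theorem~\ref{th:off-critical-decay}, with $r$ the distance from $a_1$ to $b_1$; the only difference is that your fundamental domain $[f(a),a]$ gives both tail inequalities at once with uniform constants (and makes explicit why the normalized logarithm stays bounded away from zero), whereas the paper fixes one base point $p$, uses monotonicity of $h$ to get a single inequality, and calls the other three analogous.
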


\begin{remark}
  Define the flooding time $T_{k}$ as the random quantity $T_{k} = \inf \{p \in [0,1]: a_{k} \overset{\omega_{p}}{\longleftrightarrow} b_{k} \}$, where $\omega_{p}$ is the percolation configuration in $G_{k}$ obtained from the standard monotone coupling~\eqref{eq:percolation_k}.
  Theorem~\ref{th:near-critical-scaling} shows that $\zeta^k (T_k - p_\star)$ converges weakly to a non-degenerate random variable with cumulative distribution function given by $h^{-1}$.
  See Ahlberg and Steif~\cite{ahl_ste} for more details on the existence of such limits for several classes of Boolean functions.
\end{remark}

\begin{proof}
  We know by Hartmann-Grobman Theorem that there exists a $C^\infty$ diffeomorphism $h: (0 , 1) \to (- \infty, \infty)$ such that $h \circ f = \zeta \cdot h$ (recall $\zeta = f'(p_\star)$).
  Moreover, there is a unique choice for $h$ that satisfies $h'(p_\star) = 1$ (see Appendix~\ref{app:auxiliary}).

  From the relation $h \circ f = \zeta \cdot h$ follows that $h(p_{\star}) = 0$. In particular, Taylor expansion of $h$ around $p_{\star}$ yields
  \begin{equation}
  h(p_{\star}+\varepsilon) = \varepsilon + o(\varepsilon).
  \end{equation}
  Therefore,
  \begin{equation}
    \begin{split}
     f_{k} \Big( p_{\star} + \frac{\lambda}{\zeta^k} \Big) & = f^{(k)} \Big( p_{\star} + \frac{\lambda}{\zeta^k} \Big) \\
      & = h^{-1} \Big( \zeta^k h \Big( p_{\star} + \frac{\lambda}{\zeta^k} \Big) \Big) \\
      &  = h^{-1} \Big( \zeta^k \Big( \frac{\lambda}{\zeta^k} + o\Big( \frac{\lambda}{\zeta^k} \Big) \Big) \Big)\\
      & = h^{-1} \Big( \lambda + \zeta^k o \Big( \frac{\lambda}{\zeta^k} \Big) \Big)
        \xrightarrow{k \to \infty} h^{-1}(\lambda),
    \end{split}
  \end{equation}
  proving \cref{eq:near-critical-scaling}.

  We now study the asymptotic behavior of $h^{-1}(\lambda)$ as $\lambda \to -\infty$.
  For this, fix $p < p_\star$ such that $h(f(p)) \leq -\zeta$. Due to the limit~\eqref{eq:off-critical-decay-sub}, there exists $c(p) > 0$ such that
  \begin{equation}
  -c(p) \d_{1}(a_{1}, b_{1})^{k} \leq \log f_{k}(p), \quad \text{for all } k \in \N.
  \end{equation}

Recall now from~\eqref{eq:critical_exponent_correlation} that $\nu = \log_{\zeta} \d_{1}(a_{1}, b_{1})$ to obtain the bound, for $\lambda<0$,
  \begin{equation}
    \begin{split}
     h \Big( e^{-c(p) |\lambda|^\nu} \Big)
      & \leq h \Big( e^{-c(p)
        \; \d_{G_1}(a_1, b_1)^{\lfloor \log_\zeta|\lambda| \rfloor}} \Big)
        \leq
        h \Big( e^{
        \log f_{\lfloor \log_\zeta|\lambda| \rfloor}(p) } \Big)\\
      & = h \Big( f_{\lfloor \log_\zeta|\lambda| \rfloor} (p) \Big)
        = \zeta^{\lfloor \log_\zeta|\lambda| \rfloor} h \circ f (p)
        \leq -\zeta^{ \log_\zeta|\lambda|} \leq -|\lambda| = \lambda,
    \end{split}
  \end{equation}
  proving the first inequality of \cref{eq:tail_h_1}.
  All the other three bounds are analogous and we omit their proof.
\end{proof}

\section{Locality of critical threshold}
\label{s:locality}

We now investigate within the context of hierarchical graphs the common belief among physicists concerning locality and universality in percolation.
More precisely, it is believed that the critical threshold $p_c$ depends solely on the local geometry of a graph, while the critical exponents (such as $\zeta$, $\nu$, and $\beta$) depend only on the global structure of the graph near infinity.

To make this precise, let us consider two graphs $G^l$ and $G^g$ with respective functions $f_l$ and $f_g$ (here $l$ stands for \emph{local} and $g$ stands for \emph{global}).
Assuming that they both satisfy the hypotheses \cref{e:hypothesis}, we conclude form \cref{t:crossing} that the function $f_l$ has exactly three fixed points in $[0, 1]$, namely $0$, $1$ and $p_l \in (0, 1)$ (the same can be said about $f_g$ and $p_g$).

Let us also introduce the operator $\Gamma_l$ that given an oriented graph $G$ produces the graph $\Gamma_l(G)$ obtained by replacing all edges $e = (e_-, e_+)$ of $G$ with a copy of $G_l$, where the endpoints $e_-$ and $e_+$ are respectively attached to $a$ and $b$ in $G$.
Analogously we define $\Gamma_g$ that replaces edges by $G_g$.

Fix now two sequences of positive integers $l_k$ and $g_k$ growing to infinity.
Now consider for each $k$ the graph $G_k$ given by:
\begin{equation}
G_k = \Gamma_l^{(l_k)} \circ \Gamma_g^{(g_k)}(G),
\end{equation}
where G is the graph containing a single edge $\{a_0, b_0\}$.

In other words, in the global picture, we have $g_k$ iterations of $G_g$ and each of its edges is replaced by the local picture, composed of $l_k$ iterations of $G_l$.

We also introduce the constants $\zeta_l = f_l'(p_l)$ and $\zeta_g = f_g'(p_g)$, which are both strictly larger than one by \cref{t:crossing}.
Consider also the conjugations $h_l$ and $h_g$ as in Theorem~\ref{th:near-critical-scaling}.

Analogously to~\eqref{eq:C_k_f_k}, we introduce the crossing probability function $g_k(p) := \P_p [a_k \overset{G_k}\leftrightarrow b_k]$, that can be written as
\begin{equation}
  \label{eq:crossing_locality}
  g_k (p) = f_g^{(g_k)} \circ f_l^{(l_{k})} (p).
\end{equation}

We can now state the main result of this section.
\begin{theorem}
  \label{t:locality}
  There exist sequences $p_k \in (0, 1)$ and $\zeta_{k} \in \R$ such that, for any $\lambda \in \mathbb{R}$,
  \begin{equation}
    \label{eq:locality_fluctuations}
    g_k \Big( p_k + \frac{\lambda}{\zeta_k} \Big) \to h_g^{-1}(\lambda),
  \end{equation}
  (contrast this with \cref{eq:near-critical-scaling}).
  Moreover,
  \begin{enumerate}[\quad a)]
  \item ``The critical threshold is local'':\\
  The sequence $p_{k}$ converges to $p_{l} = p_c(G_1)$ as $k$ grows.
  
  \item ``Critical exponents are global'':\\
  If $l_k \ll g_k$, then
    \begin{equation}
      \label{eq:zeta_locality}
      \lim_k \frac{1}{l_k + g_k} \log \zeta_{k} = \log \zeta_g.
    \end{equation}
  \end{enumerate}
\end{theorem}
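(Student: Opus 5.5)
The plan is to choose $p_k$ and $\zeta_k$ so that the whole statement reduces to \cref{th:near-critical-scaling} applied to $f_g$, composed with a linearization of $f_l^{(l_k)}$ near the point it sends to $p_g$. Since $f_l^{(l_k)}$ is a polynomial and an increasing homeomorphism of $[0,1]$ (by Russo's formula it is strictly increasing on $(0,1)$), we set
\begin{equation*}
  p_k := \big( f_l^{(l_k)} \big)^{-1}(p_g)
  \qquad \text{and} \qquad
  \zeta_k := \zeta_g^{g_k} \big( f_l^{(l_k)} \big)'(p_k).
\end{equation*}
With the conjugation $h_l$ from \cref{th:near-critical-scaling} (so $h_l \circ f_l = \zeta_l h_l$, $h_l'(p_l)=1$), we have $f_l^{(l_k)} = h_l^{-1}(\zeta_l^{l_k} h_l(\cdot))$. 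Hence $p_k = h_l^{-1}(\zeta_l^{-l_k} h_l(p_g))$.

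Item a) follows at once: $\zeta_l^{-l_k} h_l(p_g) \to 0$, so $p_k \to h_l^{-1}(0) = p_l$. For the derivative, the chain rule gives
\begin{equation*}
  \big( f_l^{(l_k)} \big)'(p_k) = \zeta_l^{l_k} \, \frac{h_l'(p_k)}{h_l'(p_g)}.
\end{equation*}
Here $h_l'(p_k) \to h_l'(p_l) = 1$ and $h_l'(p_g) \in (0,\infty)$ is fixed, since $h_l$ is a diffeomorphism of $(0,1)$ onto $\mathbb{R}$. So $\zeta_k = \zeta_g^{g_k} \zeta_l^{l_k} \Theta(1)$, and therefore
\begin{equation*}
  \frac{\log \zeta_k}{l_k + g_k} = \frac{g_k \log \zeta_g + l_k \log \zeta_l + O(1)}{l_k + g_k} \to \log \zeta_g
\end{equation*}
when $l_k/g_k \to 0$. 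This is item b).

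For the fluctuation statement \cref{eq:locality_fluctuations}, write $\varepsilon_k := \lambda/\zeta_k$ and $q_k := f_l^{(l_k)}(p_k + \varepsilon_k)$. The aim is to show
\begin{equation*}
  q_k = p_g + \frac{\lambda}{\zeta_g^{g_k}} (1 + o(1)).
\end{equation*}
Granting this, we conclude using the conjugation $h_g$:
\begin{equation*}
  g_k\Big( p_k + \frac{\lambda}{\zeta_k} \Big) = f_g^{(g_k)}(q_k) = h_g^{-1}\big( \zeta_g^{g_k} h_g(q_k) \big),
\end{equation*}
and $h_g(q_k) = (q_k - p_g)(1 + o(1))$ because $h_g'(p_g) = 1$. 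So the argument of $h_g^{-1}$ tends to $\lambda$, and continuity of $h_g^{-1}$ finishes the proof. This is exactly the computation in the proof of \cref{th:near-critical-scaling}, and the same computation settles the case $g_k$ fixed.

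The main obstacle is the linearization of $f_l^{(l_k)}$ at $p_k$, since the map's derivative blows up like $\zeta_l^{l_k}$. Working in $h_l$-coordinates, $h_l(p_k + \varepsilon_k) = h_l(p_k) + \varepsilon_k h_l'(p_k)(1 + o(1))$, because $h_l$ is $C^1$ near $p_l$ and $\varepsilon_k \to 0$. Multiplying by $\zeta_l^{l_k}$ gives
\begin{equation*}
  \zeta_l^{l_k} h_l(p_k + \varepsilon_k) = h_l(p_g) + \frac{\lambda \, h_l'(p_g)}{\zeta_g^{g_k}} (1 + o(1)),
\end{equation*}
where we used $\zeta_l^{l_k} h_l'(p_k)\varepsilon_k = \lambda h_l'(p_g)\zeta_g^{-g_k}$ and the fact that the relative error $o(1)$ is preserved after multiplication. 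Applying $h_l^{-1}$, which is $C^1$ near $h_l(p_g)$ with derivative $1/h_l'(p_g)$, yields the claimed expansion of $q_k$.

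The only care needed is that the $o(1)$ terms are uniform. This holds because $p_k$ stays in a fixed compact neighborhood of $p_l$ and $h_l(p_g)$ is fixed, so the argument does not require $l_k \ll g_k$. That hypothesis enters only in item b).
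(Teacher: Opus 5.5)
Your proof is correct and follows the paper's route. You use the same $p_k=(f_l^{(l_k)})^{-1}(p_g)$ and the same $\zeta_k=\zeta_g^{g_k}(f_l^{(l_k)})'(p_k)$, linearize $f_l^{(l_k)}$ at $p_k$, and then finish with the $h_g$-computation of \cref{th:near-critical-scaling}.

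Doing the linearization in $h_l$-coordinates is a cleaner execution than the paper's Taylor-remainder bound. It gives the two-sided estimate $(f_l^{(l_k)})'(p_k)=\zeta_l^{l_k}\Theta(1)$, whereas the paper only records $A_k\le C^{l_k}$, which by itself yields just the upper half of \eqref{eq:zeta_locality}. It also makes \eqref{eq:locality_fluctuations} hold without using $l_k\ll g_k$.

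One correction: drop your aside that the computation also covers fixed $g_k$. That claim is false, because then $q_k\not\to p_g$.
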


\begin{proof}
  We start by defining $p_{k} = f_{l}^{-(l_{k})}(p_{g})$, which converges to $p_{l}$ by the fact that $\zeta_{l} >1$.
  Let $A_{k} = (f_{l}^{l_{k}})'(p_{k})$ and define
  \begin{equation}\label{eq:zeta_k}
  \zeta_{k} = A_{k} \zeta_{g}^{n_{k}}.
  \end{equation}
  
  In order to prove~\eqref{eq:zeta_locality}, first observe that, since $f'$ is bounded, one gets $A_{k} \leq C^{l_{k}}$, for some $C>0$. From this,
  \begin{equation}
  \lim_k \frac{1}{l_k + g_k} \log \zeta_{k} = \lim_{k} \frac{l_{k}}{l_k + g_k} \log C + \frac{g_{k}}{l_k + g_k} \log \zeta_{g} = \log \zeta_{g}.
  \end{equation}
  
  We are now left with proving~\eqref{eq:crossing_locality}. Start by noticing that first-order Taylor expansion of $f^{(l_{k})}$ yields
  \begin{equation}\label{eq:locality_1}
  \begin{split}
  f_{l}^{(l_{k})} \Big( p_{k} + \frac{\lambda}{\zeta_{k}} \Big) & = f_{l}^{(l_{k})} (p_{k}) +  (f_{l}^{(l_{k})})' (p_{k}) \frac{\lambda}{\zeta_{k}} + R_{k}(\lambda) \\
  & = p_{g} + \frac{\lambda}{\zeta_{g}^{n_{k}}} + R_{k}(\lambda),
  \end{split}
  \end{equation}
where the error term satisfies
  \begin{equation}
  |R_{k}(\lambda)| \leq \frac{c^{l_{k}}}{\zeta_{k}^{2}}\lambda^{2},
  \end{equation}
for some positive constant $c$.
In particular,
\begin{equation}
\zeta_{g}^{n_{k}}|R_{k}(\lambda)| = o(1),
\end{equation}
for any fixed $\lambda \in \R$.

We now consider the function $f_{g}$ and write $h_{g}$ to be the Hartman-Grobman conjugation at $p_{g}$ with $h'(p_{g})=1$. Taylor expansion of $h_{g}$ around $p_{g}$ yields
\begin{equation}
h_{g} \Big( p_{g} + \frac{\lambda}{\zeta_{g}^{n_{k}}} + R_{k}(\lambda) \Big) = h_{g} \Big( p_{g} + \frac{\lambda + o(1)}{\zeta_{g}^{n_{k}}} \Big) = \frac{\lambda + o(1)}{\zeta_{g}^{n_{k}}}.
\end{equation}
This now implies
\begin{equation}
f_{g}^{(n_{k})} \Big( p_{g} + \frac{\lambda}{\zeta_{g}^{n_{k}}} + R_{k}(\lambda) \Big) = h_{g}^{-1} \big( \lambda + o(1) \big).
\end{equation}
The proof is complete by combining the equation above with~\eqref{eq:locality_1}.
\end{proof}

\section{Fixed points of Booelan functions}
\label{s:fixed_points}

Throughout this section, we fix a monotone Boolean function $g:\{0,1\}^{n} \to \{0,1\}$. We assume that $g$ is nontrivial, in the sense that $g(0)=0$ and $g(1)=1$. In this context, Russo's formula states
\begin{equation}
\frac{\d}{\d p} \mathbb{E}_{p}[g] = \mathbb{E}_{p}\big[ \#\{\text{pivotal bits for } g\} \big] = \sum \mathbb{P}_{p} ( j \text{ is pivotal for } g).
\end{equation}
In the particular case where $p=0$, the distribution $\mathbb{P}_{p}$ is deterministic and the derivative above counts the number of bits $j$ such that $g(e_{j}) = 1$, where $e_{j}$ denotes the configuration where all by the $j$th bit is equal to one. In particular, this derivative is always a non-negative integer. The same reasoning applied to the derivative at $p=1$, where it counts the number of bits $j \in [n]$ such that $g(1-e_{j})=0$.

Furthermore, if $\frac{\d}{\d p} \mathbb{E}_{p}[g] \Big|_{p=0} = \frac{\d}{\d p} \mathbb{E}_{p}[g] \Big|_{p=1} =1$, then $g$ is a dictator function. Indeed, since $g$ is monotone and its derivative at zero is one, there exists one bit $i \in [n]$ such that, if $\omega_i=1$, then $g(\omega)=1$. At the same time, an analogous argument using the derivative at one implies that there is another bit $j \in [n]$ such that, if $\omega_j=0$, then $g(\omega)=0$. In order to prove that $g$ is a dictator function, one just needs to observe that $i=j$. If this were not the case, by considering the configuration $\omega = e_i$, we would have $g(\omega)=1$, since $\omega_i=1$ and $g(\omega)=0$, as $\omega_j=0$, yielding a contradiction.

\begin{theorem}\label{t:fixed_poits}
Let $g:\{0,1\}^{n} \to \{0,1\}$ be a monotone non-trivial Boolean function that is not a dictator. Then the map $p \mapsto \mathbb{E}_{p}[g]$ has a fixed point in $(0,1)$ if, and only if, $\frac{\d}{\d p} \mathbb{E}_{p}[g]\Big|_{p=0} = \frac{\d}{\d p} \mathbb{E}_{p}[g]\Big|_{p=1} = 0$. Furthermore, this fixed point $p_{\circ}$ is unique and $\zeta = \frac{\d}{\d p} \mathbb{E}_{p}[g]\Big|_{p=p_{\circ}} >1$. 
\end{theorem}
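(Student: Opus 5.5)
Write $\phi(p) = \mathbb{E}_p[g]$, a polynomial with $\phi(0)=0$, $\phi(1)=1$, nondecreasing by monotonicity. The plan mirrors the proof of \cref{t:crossing} in the simplified form of Remark~\ref{rmk:simpler_proof}. The new ingredient is an algorithm with small revealment, built from the hypothesis that $\phi'(0)=\phi'(1)=0$.

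\emph{Necessity.} Suppose $\phi'(0)\ge 1$. Since $\phi'(0)$ counts bits $j$ with $g(e_j)=1$, there is a bit $i$ with $\omega_i=1 \Rightarrow g(\omega)=1$, so $\phi(p)\ge p$. If $\phi(p_\circ)=p_\circ$ for some $p_\circ\in(0,1)$, equality forces $g(\omega)=0$ whenever $\omega_i=0$ (this event has positive probability at $p_\circ$), and then $g$ is the dictator on bit $i$, a contradiction. The case $\phi'(1)\ge1$ is symmetric, giving $\phi(p)\le p$ with equality only for a dictator. Hence a fixed point in $(0,1)$ forces both derivatives to vanish.

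\emph{Sufficiency and uniqueness.} If $\phi'(0)=\phi'(1)=0$, then $\phi(p)<p$ near $0$ and $\phi(p)>p$ near $1$, so by the intermediate value theorem a fixed point $p_\circ\in(0,1)$ exists. For uniqueness it suffices, exactly as in \cref{t:crossing}, to show $\phi'(p_\circ)>1$ at every interior fixed point. At a fixed point, $\Var_{p_\circ}(g)=p_\circ(1-p_\circ)$. The strengthened OSSS inequality then gives
\[
p_\circ(1-p_\circ)\le p_\circ(1-p_\circ)\sum_j \delta_j \,\mathbb{P}_{p_\circ}(j \text{ pivotal}) \le p_\circ(1-p_\circ)\,\max_j\delta_j\,\phi'(p_\circ).
\]
Thus it is enough to exhibit a randomized algorithm computing $g$ in which every bit has revealment $\delta_j<1$ at $p_\circ$.

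\emph{The algorithm (main obstacle).} We need an analogue of the two disjoint "end-neighbourhoods" $\{e\ni a_1\}$ and $\{e\ni b_1\}$ used in Lemma~\ref{l:revealment}. Take a minimal $1$-witness set $S$ (a minimal set of bits whose being all $1$ forces $g=1$) and a minimal $0$-witness set $T$. The condition $\phi'(0)=0$ says no singleton is a $1$-witness, so $|S|\ge2$. The condition $\phi'(1)=0$ says no singleton is a $0$-witness (a set whose being all $0$ forces $g=0$), so $|T|\ge2$.

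The algorithm first chooses uniformly among two strategies. In strategy A it queries the bits of $T$; if they are all $0$ it stops, since $g=0$. In strategy B it queries the bits of $S$; if they are all $1$ it stops. In either case, if it has not stopped, it queries all remaining bits.

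Consider a bit $j$. Suppose $j\notin T$. Then $j$ is revealed under strategy A only if $T$ is not all $0$, which has probability $1-(1-p_\circ)^{|T|}<1$. So $\delta_j\le \tfrac12\big(1+1-(1-p_\circ)^{|T|}\big)<1$. Similarly, if $j\notin S$ then $\delta_j<1$ using strategy B. The remaining danger is a bit $j\in S\cap T$. Since every $1$-witness and every $0$-witness intersect, such bits always exist, so the choice of $S$ and $T$ must be made bit by bit.

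For a given $j$, I would use a pair $S_j$, $T_j$ avoiding $j$. For instance, $T_j$ can be a minimal $0$-witness not containing $j$: take the complement of $\{j\}$ and shrink it. This works provided setting all bits except $j$ to $0$ forces $g=0$, which is exactly the statement that $\{j\}$ is not a $1$-witness. The algorithm then picks $j$ uniformly at random among the $n$ bits and runs strategy A with $T_j$.

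The remaining issue is to recheck the revealment for bits $i\ne j$ inside $T_j$. Making this averaging uniform over all bits is the delicate step. The fallback is to check whether the plain two-strategy algorithm already suffices: it would if one can choose $S$ and $T$ with $j\notin S\cap T$ for the given $j$, and then randomize over $j$. With $\max_j\delta_j<1$ established, the displayed inequality gives $\phi'(p_\circ)>1$. Since $\phi-\mathrm{id}$ then crosses zero only upward in $(0,1)$, and is negative near $0$ and positive near $1$, the interior fixed point is unique.
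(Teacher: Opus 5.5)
Your existence and necessity arguments are correct. Your necessity argument is also more direct than the paper's. The paper notes that $\phi\ge\mathrm{id}$ turns an interior fixed point into a tangency, so $\phi'(p_\circ)=1$, which contradicts the bound $\zeta>1$ proved first. You instead observe that $\phi(p_\circ)=p_\circ$ forces $\mathbb{P}_{p_\circ}(\omega_i=0,\,g=1)=0$, hence $g=\omega_i$; this needs no OSSS at all. Reducing uniqueness to finding an algorithm with $\max_j\delta_j<1$, via the one-scale strengthened OSSS inequality, is also fine; this is the argument of Remark~\ref{rmk:simpler_proof}. The paper's proof of this theorem instead iterates $g$ hierarchically, applies OSSS to $g_k$, and takes $k$-th roots, with the same outcome.

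The step you leave open as ``delicate'' is not delicate: your per-bit algorithm already settles it. For bit $i$ to have revealment below one, you only need one positive-probability scenario in which $i$ is never queried. It does not matter whether $i$ gets queried when some other index $j\neq i$ is drawn. In your algorithm, if the drawn index is $j=i$ and all bits of $T_i$ (which excludes $i$) are $0$, the algorithm stops without querying $i$. Hence
\[
\delta_i \le 1-\frac{1}{n}(1-p_\circ)^{|T_i|} \le 1-\frac{1}{n}(1-p_\circ)^{n-1}<1
\]
for every $i$. You may even take $T_i=[n]\setminus\{i\}$ without minimising it. With this line your proof is complete.

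Your fallback with a fixed pair $(S,T)$ cannot work, as you partly noticed: every $1$-witness meets every $0$-witness, so some bit is always queried. The randomisation over $j$ is essential.

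The paper's algorithm is a hypothesis-free variant of the same idea. It draws $j$ uniformly, reveals all other bits, and queries $j$ only if $j$ is pivotal. This gives $\delta_j=1-\frac{1}{n}\bigl(1-\mathbb{P}_{p_\circ}(j\text{ pivotal})\bigr)$, which is $<1$ because a monotone non-dictator admits, for each $j$, a configuration of the other bits in which $j$ is not pivotal. That version uses only non-dictatorship. Yours uses $g(e_j)=0$ for all $j$, i.e.\ $\phi'(0)=0$. This is harmless here, since your necessity step shows it holds whenever an interior fixed point exists.
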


\begin{remark}
The condition $\frac{\d}{\d p} \mathbb{E}_{p}[g] \Big|_{p=0} = \frac{\d}{\d p} \mathbb{E}_{p}[g] \Big|_{p=1} = 0$ might seem a bit hard to verify at first. One easy way to ensure it is to check that, if $e_{j} \in \{0,1\}^{n}$ denotes the configuration with all but the $j$th entry equal to zero, then $g(e_{j}) = 0$ and $g(1-e_{j}) = 1$, for all $j \in [n]$. This in fact implies that both derivatives are zero, by Russo's formula.
\end{remark}

The proof of the theorem above follows the same lines as the proof of Theorem~\ref{t:crossing}.
\begin{proof}
Let us first prove that, if a fixed point in the interval $(0,1)$ exists, the derivative of $\mathbb{E}_{p}[g]$ at this fixed point is strictly larger than one. This immediately implies uniqueness.
  
Consider the algorithm $\mathcal{A}_{1}$ that chooses one bit uniformly at random and reveals everything else. If the outcome of $g$ can then be determined, the random bit is not observed, otherwise, query this bit as well. For every bit $j \in [k]$, the revealment of the algorithm $\mathcal{A}_1$ is bounded by
\begin{equation}
\delta_{j}(p) \leq 1-\frac{1}{n}\Big(1- \max_{i \in [n]} \mathbb{P}_{p} ( i \text{ is pivotal for } g) \Big) <1.
\end{equation}
The last bound above follows from the fact that $g$ is not a dictator, which implies that the probability of each bit being pivotal is always smaller than one.

Denote now by $g_{k}:\{0,1\}^{kn} \to \{0,1\}$ the sequence of hierarchical Boolean functions obtained by successive applications of the function $g$ (notice that $g_{1} = g$). Let $\mathcal{A}_{k}$ be the recursive algorithm that determines $g_{k}$ by successively applying the algorithm $\mathcal{A}_{k-1}$ as needed. Notice that, if $p_{\circ} \in (0,1)$ is a fixed point of $p \mapsto \mathbb{E}_{p}[g]$, the revealment of $\mathcal{A}_{k}$ satisfies
\begin{equation}
\max_{i \in [kn]} \delta^{k}_{i}(p_{\circ}) \leq \Big(\max_{j \in [n]} \delta_{j}(p_{\circ}) \Big)^{k}.
\end{equation}

In particular, OSSS inequality~\eqref{eq:osss} applied to $g_{k}$ at $p_{\circ}$ yields
\begin{equation}
p_{\circ}(1-p_{\circ}) = \Var_{p_{\circ}}(g_{k}) \leq \sum_{i \in [kn]} \delta_i(p_{\circ}) \mathbb{P}_{p_{\circ}} ( i \text{ is pivotal for } g_{k}).
\end{equation}
By Russo's formula, we now obtain
  \begin{equation}
    \label{eq:largederivative2}
    \begin{split}
      \zeta^{k} = \frac{\d}{\d p} \mathbb{E}_{p}(g_{k}) \Bigg|_{p=p_{\circ}} & = \sum_{i \in [kn]} \mathbb{P}_{p_\circ} (i \text{ is pivotal to $g_k$}) \\
      & \geq p_{\circ}(1-p_{\circ}) \frac{1}{\max_{i \in [kn]} \delta^{k}_{i}(p_{\circ})} \\
      & \geq p_\circ (1 - p_\circ) \Big(\max_{j \in [n]} \delta_{j}(p_{\circ}) \Big)^{-k}.
    \end{split}
  \end{equation}
  Taking the limit as $k$ increases yields
  \begin{equation}
  \zeta \geq \Big(\max_{j \in [n]} \delta_{j}(p_{\circ}) \Big)^{-1}>1.
  \end{equation}

Let us now verify the condition for existence of the fixed point. Assume $\frac{\d}{\d p} \mathbb{E}_{p}[g] \Big|_{p=0} = \frac{\d}{\d p} \mathbb{E}_{p}[g] \Big|_{p=1} = 0$. Consider the map $h(p) = \mathbb{E}_{p}[g]-p$. Notice that $h(0)= h(1)=0$ and $h'(0) = h'(1) = -1$. This implies that the function $h$ is decreasing around $0$ and $1$. In particular, for $p$ small enough, $h(p)<0$ and $h(1-p)>0$. By continuity, there exists at least one value of $p_{\star} \in (0,1)$ such that $h(p_{\star}) = 0$, yielding the existence of the fixed point.

Suppose now that $\frac{\d}{\d p} \mathbb{E}_{p}[g] \Big|_{p=0} \geq 1$ and that $g$ is not a dictator function. Let us prove that there exists no fixed point in $(0,1)$. In this case, there exists at least one bit $i \in [n]$ such that, if $\omega_i=1$, then $\mathbb{E}_{p}[g] \geq \mathbb{P}_p [\omega_i=1] = p$. In particular, if $p_\circ \in (0,1)$ is a fixed point of $\mathbb{E}_{p}[g]$, necessarily $p \mapsto \mathbb{E}_{p}[g]$ is tangent to the identity function at $p_\circ$, implying $\frac{\d}{\d p} \mathbb{E}_{p}[g] \Big|_{p=p_\circ} = 1$, contradicting the first part of the proof. An analogous argument covers the case when $\frac{\d}{\d p} \mathbb{E}_{p}[g] \Big|_{p=1} \geq 1$ and concludes the proof.
\end{proof}

\begin{remark}
Theorem~\ref{t:sharp_ns} can also be written in terms of general Boolean functions. Assume $g:\{0,1\}^{n} \to \{0,1\}$ satisfies the hipotheses of Theorem~\ref{t:fixed_poits}. In this case, Theorem~\ref{t:sharp_ns} applies for the sequence $g_{k}: \{0,1\}^{nk} \to \{0,1\}$ of iterations of $g$, yielding sharp noise sensitivity in this case as well. In particular, we obtain an alternative proof of the noise sensitivity of the iterated majority function, and a precise description of the correlation limits depending on the decay rate of the noise $\varepsilon_{n}$ (see, for example~\cite{odonnell}).
\end{remark}

\section{Open problems}
\label{s:open}

Below we list a collection of problems and possible directions for further research on percolation and other models on hierarchical lattices.

\begin{enumerate}
\item {\bf Incipient infinite cluster.} Prove that the infinite incipient cluster in the limiting graph $G_{\infty}$ exists and can be realized in two possible (equivalent) ways.
\begin{itemize}
\item[1.a.] The infinite incipient cluster can be seen as the limiting distribution of supercritical percolation in $G_{\infty}$, conditioned on the event $[E_\infty \leftrightarrow \infty]$, as $p \downarrow p_c$.
\item[1.b.] Consider critical percolation on the random marked graph $(G_k, E_k)$, conditioned on the event $[E_k \leftrightarrow \{a_k, b_k\}]$. Prove that the local limit of the cluster of $E_{k}$ around its root converges to the incipient infinite cluster.
\end{itemize}

\item {\bf Exceptional times.} Consider dynamical percolation on the limiting graph $G_{\infty}$. Prove the existence of exceptional times and compute the dimension of the exceptional-time set.

\item {\bf One-arm scaling.} Simulations from~\cite{levitan} (see Figure 3 and Equation (11)) suggest that, for the diamond hierarchical lattice (see Figure~\ref{f:d_k}), we have a limit of the scaled one-arm probability.
  More precisely, for any $\lambda > 0$,
\begin{equation}
  \d_{1}(a_{1}, b_{1})^{\alpha_1 k} \;\; \mathbb{P}^{k}_{p_\star + \lambda/{\zeta^k}}
  \big( E_{k} \longleftrightarrow \{a_k, b_k\} \big)
  \overset{k}{\rightarrow} \eta(\lambda) \in (0, \infty).
\end{equation}
It would be interesting to prove this scaling limit both in the case of the diamond lattice and for other seed graphs.

\item {\bf Random seeds.} Consider two possible seed graphs $G$ and $H$. At each step of the construction of $G_{k}$, choose independently either $G$, with probability $q$, or $H$, with probability $1-q$, and perform the edge-replacing step with the chosen graph. Thus, $G_{k}$ is a random graph. Can the analysis performed here be carried out in this context? In particular, extrapolating from Section~\ref{s:locality}, we conjecture that, while the critical point will be random, critical exponents should depend on macroscopic scales, where averaging effects take place, and will thus be constant.

\item {\bf Other statistical mechanics models.} Consider different models atop of hierarchical lattices, such as dependent percolation, the Ising model, and FK percolation.
  Several works in this direction have been mentioned in \cref{ss:history}.
  However, just as it was done in the case of Bernoulli percolation in this article, it would be an interesting investigation to try to establish more general and refined results for such models.

\item {\bf Dependent percolation.} Recently, there has been a growing interest in percolation models featuring strong dependence, such as random interlacements \cite{Szn09}, level sets of the Gaussian Free Field \cite{rodriguez2013phase} and Boolean percolation \cite{meester1996continuum}.
It would be interesting to translate some of these models to the hierarchical context in a way that maintains their tractability and then try to understand the effect these dependencies have on the critical and near-critical behavior of the model.

\item {\bf Deviations for noise-sensitivity.} In a conversation with Gábor Pete, he suggested the following question.
  Suppose that, for each edge in $G_k$, one performs a continuous-time dynamics that re-samples states after exponential times of parameter one.
  Denote by $(\omega_t)_{t \geq 0}$ the trajectory of this process.
  For a fixed parameter $T$, we have established in Theorem~\ref{t:sharp_ns} the existence of limit $\mathbb{P}_{p_\star}(\omega_{0} \in C_{k}, \omega_{T/\zeta^{k}} \in C_k)$ as $k$ grows.
  It would be interesting to obtain the limit of $\mathbb{P}_{p_{\star}}( \omega_{t/\zeta^{k}} \in C_k, \text{ for every $0 \leq t \leq T$})$ as $k \to \infty$.
  Also, what is the asymptotic behavior of this limit as $T \to \infty$?
  See Section~4 of \cite{hammond_mossel_pete} for results and conjectures pertaining to the triangular lattice version.
\end{enumerate}

\appendix

\section{Auxiliary results}\label{app:auxiliary}
\label{s:auxiliary}

In this section we collect some results that are used throughout the manuscript. These are classical results in the analysis of hyperbolic fixed points for dynamic systems and can be found in any text book. See, for instance,~\cite{viana_espinar}.

\begin{theorem}[Hartman-Grobman]\label{t:hg}
Let $U, \tilde{U} \subset \R^{n}$ be open sets and consider a diffeomorphism $f:U \to \tilde{U}$. Assume there exists $u_{*} \in U$ such that $f(u_{*}) = u_{*}$ and that the Jacobian matrix $Df(u^{*})$ has no eigenvalue with absolute value one. Then there exist an open neighborhood $V \subset U$ of $u_{*}$, an open neighborhood $\tilde{V} \subset \R^{n}$ of the origin, and a homeomorphism $h: V \to W$ such that $h(u^{*}) = 0$ and
\begin{equation}\label{eq:conjugation}
f(x) = h^{-1}\Big( Df(u_{*}) h(x) \Big), \qquad \text{for all } x \in V.
\end{equation}
\end{theorem}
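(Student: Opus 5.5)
The statement to prove is the Hartman--Grobman theorem in the appendix. Since the paper only uses it in dimension one (conjugating $f$ near $p_\star$, or $s$ near $0$, to $x\mapsto\zeta x$), I will prove it first in that case by an explicit construction. Then I will indicate the general $n$-dimensional argument via a contraction on bounded perturbations.

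\textbf{One-dimensional case.} Assume $u_*=0$ and set $\lambda=f'(0)$ with $|\lambda|\neq1$.
\begin{enumerate}
\item By passing to $f^{-1}$ if needed, reduce to $|\lambda|<1$, since a conjugacy for $f^{-1}$ is also a conjugacy for $f$.
\item Choose a small interval $V=(-r,r)$ on which $|f'|\le\kappa<1$. Then $f(V)\subset V$ and $f^{k}\to0$ uniformly on $V$.
\item Define the Koenigs-type limit
\begin{equation*}
h(x)=\lim_{k\to\infty}\lambda^{-k}f^{(k)}(x).
\end{equation*}
\item To show convergence, write $f(y)=\lambda y(1+\rho(y))$ with $|\rho(y)|\le C|y|$. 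This uses that $f$ is $C^2$, which holds for the polynomials in the paper.
\item Then
\begin{equation*}
\lambda^{-k-1}f^{(k+1)}(x)=\lambda^{-k}f^{(k)}(x)\bigl(1+\rho(f^{(k)}(x))\bigr).
\end{equation*}
The product $\prod_k\bigl(1+\rho(f^{(k)}(x))\bigr)$ converges uniformly, because $\sum_k|f^{(k)}(x)|\le\sum_k r\kappa^k<\infty$.
\item The identity $h\circ f=\lambda h$ follows immediately from the definition.
\item The same product bound applies to the derivatives. Write $(f^{(k)})'(x)=\prod_{j<k}f'(f^{(j)}(x))$ and note $f'(y)/\lambda=1+O(|y|)$. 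Hence $\lambda^{-k}(f^{(k)})'$ converges uniformly to a nonvanishing limit.
\item Therefore $h$ is a $C^1$ diffeomorphism onto its image with $h'(0)=1$. Restricting $V$ gives the open sets in the statement.
\end{enumerate}
This yields differentiability of the conjugacy, which Proposition~\ref{p:diffeo_conjugation} records. It also gives the extension to a full interval without interior fixed points by iterating: $h(x)=\lambda^{-k}h(f^{(k)}(x))$.

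\textbf{General $n$.}
\begin{enumerate}
\item Write $f=A+\phi$ with $A=Df(u_*)$ hyperbolic.
\item Cut off $\phi$ outside a small ball so that $\phi$ is globally Lipschitz with small constant $\epsilon$ and bounded. Equality with $f$ near $u_*$ is all that matters.
\item Look for $h=\mathrm{id}+\psi$ with $\psi$ bounded and continuous, solving $h\circ f=A\circ h$. This is equivalent to
\begin{equation*}
\psi = A\psi\circ f^{-1} - \phi\circ f^{-1}.
\end{equation*}
Split this equation along the stable and unstable spaces $E^s\oplus E^u$ of $A$.
\item On $E^s$, use the fixed-point form $\psi_s=A_s\psi_s\circ f^{-1}-\phi_s\circ f^{-1}$. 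On $E^u$, rewrite as $\psi_u=A_u^{-1}(\psi_u\circ f+\phi_u)$.
\item In an adapted norm with $\|A_s\|,\|A_u^{-1}\|<1$, the resulting operator is a contraction on $C_b(\mathbb{R}^n,\mathbb{R}^n)$, giving a unique $\psi$.
\item Apply the same scheme to the equation $(\mathrm{id}+\tilde\psi)\circ A=f\circ(\mathrm{id}+\tilde\psi)$, which produces $\tilde h=\mathrm{id}+\tilde\psi$.
\item Show that $h\circ\tilde h$ and $\tilde h\circ h$ are bounded perturbations of the identity that commute with $A$ and with $f$, respectively. By uniqueness of the bounded solution of the corresponding homogeneous equation, both equal the identity. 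Hence $h$ is a homeomorphism.
\end{enumerate}

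\textbf{Main obstacle.} I expect the hard step to be invertibility of $h$ in the general case. That step needs the second fixed-point problem and the uniqueness argument, and in that setting one gets only a homeomorphism, not a diffeomorphism. In one dimension this difficulty disappears because $h$ is given explicitly by the limit above, and that is all the paper needs.
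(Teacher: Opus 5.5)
The paper gives no proof of this statement. It quotes Hartman--Grobman as a classical result with a pointer to the textbook literature, so your proposal supplies what the paper treats as a black box. Your general-$n$ argument is the standard textbook proof, and it is correct in outline: Lipschitz cut-off, a contraction on bounded continuous perturbations of the identity split along $E^s\oplus E^u$, and invertibility via the reverse conjugacy plus uniqueness.

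When you write it out, make three glossed points explicit:
\begin{itemize}
\item The cut-off map $A+\phi$ must be a global homeomorphism of $\R^n$ for the equation $\psi=A\psi\circ f^{-1}-\phi\circ f^{-1}$ to make sense. This holds once $\epsilon<\|A^{-1}\|^{-1}$: you get injectivity from the lower Lipschitz bound, and surjectivity by contracting $x\mapsto A^{-1}(y-\phi(x))$.
\item In the reverse equation $\tilde h\circ A=f\circ\tilde h$, the unknown enters nonlinearly through $\phi(x+\tilde\psi(x))$. The same happens in the uniqueness claim for $\tilde h\circ h$, which commutes with $f$ rather than $A$. There the contraction genuinely needs $\epsilon$ small compared with $1-\max(\|A_s\|,\|A_u^{-1}\|)$. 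This contrasts with the equation for $\psi$ and the $A$-commuting case, where no smallness is used beyond invertibility of $f$. Calling this ``the same scheme'' hides the only place the small Lipschitz constant matters.
\item The normalization $h(u_*)=0$ is not automatic from the construction but follows at once. After translating, $h(0)=Ah(0)$, and $1$ is not an eigenvalue of $A$.
\end{itemize}

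Your one-dimensional Koenigs construction is a genuinely different and more elementary route. It costs a $C^2$ (or $C^{1,\alpha}$) hypothesis instead of $C^1$, which is harmless for the polynomials $f$ and $s$ in the paper. In exchange it gives more than the topological theorem can. You get a $C^1$ conjugacy with $h'(u_*)=1$, and even $h(x)=x\bigl(1+O(|x|)\bigr)$ directly from the product representation. You also get its extension to the whole basin. That is exactly Proposition~\ref{p:diffeo_conjugation} together with the extension argument closing the appendix.

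This differentiable version is what the paper actually relies on. Examples are the Lipschitz bounds on $h$ in the scaling-relation proof, and the expansions $h(p_\star+\varepsilon)=\varepsilon+o(\varepsilon)$ and $r(\varepsilon)=\varepsilon+O(\varepsilon^2)$ in the near-critical and noise-sensitivity proofs. So for the paper's purposes your one-dimensional argument is the more useful half. The contraction argument is what proves the statement as written, for merely $C^1$ diffeomorphisms, but it only yields a homeomorphism.
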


It is not the case that the homeomorphism $h$ needs to be differentiable. The following result gives us sufficient conditions to ensure this in the one-dimensional case.
\begin{proposition}\label{p:diffeo_conjugation}
Let $U, \tilde{U} \subset \R$ be open sets and consider a $C^{k+1}$ diffeomorphism $f:U \to \tilde{U}$, for some $k \geq 1$. Assume there exists $u_{\star} \in U$ such that $f(u_{\star}) = u_{\star}$ and such that $\zeta = f'(u_{\star})$ satisfies $|\zeta| \neq 1$. There exists a $C^{k}$ diffeomorphism $h: V \to \tilde{V}$ between open neighborhoods of $u_{\star}$ and the origin such that $h(u_{\star}) = 0$ and such that
\begin{equation}
f(x) = h^{-1} \Big( \zeta h(x) \Big), \quad \text{for all } x \in V.
\end{equation}
Furtheremore, the diffeomorphism $h$ is unique up to multiplication by constants.
\end{proposition}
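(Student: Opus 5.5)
We build $h$ explicitly as a Koenigs-type limit, treating the case $|\zeta|>1$ (the relevant one in the paper; $|\zeta|<1$ is symmetric, or follows by applying the argument to $f^{-1}$, which has derivative $\zeta^{-1}$ at $u_\star$).

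\textbf{Reduction.} Work with $g=f^{-1}$, which is a $C^{k+1}$ local diffeomorphism near $u_\star$ with $g'(u_\star)=\zeta^{-1}$. A conjugation $h\circ g=\zeta^{-1}h$ is equivalent to $h\circ f=\zeta h$. Since $|\zeta^{-1}|<1$, there is a small interval $V\ni u_\star$ with $g(V)\subset V$ and $|g(x)-u_\star|\le \rho|x-u_\star|$ for some $\rho<1$.

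\textbf{Construction.} Define
\[
h_n(x)=\zeta^{n}\big(g^{(n)}(x)-u_\star\big), \qquad h=\lim_{n}h_n .
\]
Convergence comes from writing $g(y)-u_\star=\zeta^{-1}(y-u_\star)\big(1+O(|y-u_\star|)\big)$, which gives
\[
h_{n+1}(x)=h_n(x)\big(1+O(\rho^n)\big).
\]
The product $\prod(1+O(\rho^n))$ converges uniformly on $V$. The limit satisfies $h(g(x))=\lim \zeta^{n}(g^{(n+1)}(x)-u_\star)=\zeta^{-1}h(x)$, together with $h(u_\star)=0$. Moreover $h_n'(x)=\prod_{j<n}\zeta\, g'(g^{(j)}x)$ converges uniformly to a positive-modulus limit, because $|\zeta g'(g^{(j)}x)-1|\le C\rho^j$ (using $g\in C^2$). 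Hence $h$ is $C^1$, $h'(u_\star)=1$, and by the inverse function theorem $h$ is a $C^1$ diffeomorphism onto a neighborhood $\tilde V$ of $0$.

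\textbf{Higher regularity (the main obstacle).} We must show that the derivatives $h_n^{(m)}$, $m\le k$, converge uniformly. The plan is to differentiate the product formula for $h_n'$ via Faà di Bruno. Each derivative of $\log(\zeta g'(g^{(j)}x))$ carries chain-rule factors $(g^{(j)})'=O(|\zeta|^{-j})$ times bounded derivatives of $g'$ up to order $k$, which are available because $g\in C^{k+1}$. Inductively, the $m$-th derivative of the $j$-th log-factor is $O(\rho^{j})$, so the series of log-derivatives converges uniformly up to order $k-1$. This gives $h'\in C^{k-1}$, i.e. $h\in C^k$. Exactly one derivative is lost here, which matches the hypothesis $f\in C^{k+1}$.

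\textbf{Uniqueness.} Let $\tilde h$ be another $C^1$ conjugation and set $\phi=\tilde h\circ h^{-1}$. Then $\phi(\zeta^{-1}y)=\zeta^{-1}\phi(y)$ and $\phi(0)=0$. Iterating gives $\phi(y)=\zeta^{n}\phi(\zeta^{-n}y)\to\phi'(0)\,y$, so $\tilde h=\phi'(0)\,h$. Hence $h$ is unique up to a multiplicative constant, and it is normalized by $h'(u_\star)=1$.
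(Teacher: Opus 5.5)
The paper does not prove this proposition. The appendix quotes it as a classical fact about hyperbolic fixed points, pointing to the textbook of Viana and Espinar, and only the subsequent extension of $h$ to all of $(0,1)$ is argued in the text. Your Koenigs-type construction is a correct, self-contained proof, and it gives more than the citation does:
\begin{itemize}
\item an explicit formula $h(x)=\lim_n \zeta^n\big(f^{-n}(x)-u_\star\big)$ with the normalization $h'(u_\star)=1$ built in;
\item an actual proof of the uniqueness clause, which the paper uses every time it singles out the conjugation with $h'(p_\star)=1$;
\item a direct global conjugacy. In the paper's setting every backward orbit $f^{-n}(x)$, $x\in(0,1)$, converges to the unique interior fixed point. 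So the same limit exists on all of $(0,1)$ and coincides with the normalized global conjugacy $h:(0,1)\to\R$ built at the end of the appendix, making the separate well-definedness check there unnecessary.
\end{itemize}
The citation, in turn, is shorter and sits inside a framework that also covers the $n$-dimensional Hartman--Grobman statement quoted alongside. Your product trick is intrinsically one-dimensional (in higher dimensions resonances obstruct smooth linearization), but dimension one is the only case the paper needs.

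Three small points to tighten.
\begin{itemize}
\item Your regularity count is too pessimistic. Since $\psi=\log(\zeta g')$ is $C^k$, your estimates control the $m$-th derivatives of $\sum_j \psi\circ g^{(j)}$ for every $m\le k$, not just $m\le k-1$. Hence $\log h'\in C^k$ and in fact $h\in C^{k+1}$, as in Sternberg's one-dimensional linearization theorem; no derivative is lost.
\item The Fa\`a di Bruno expansion involves the higher derivatives $\frac{d^i}{dx^i}g^{(j)}$, $2\le i\le m$, not only $(g^{(j)})'$. You should note that these are also $O(\rho^j)$ with $\rho=\sup_V|g'|$. This follows from the same induction on $m$, since $\log|(g^{(j)})'|=\sum_{l<j}\log|g'|\circ g^{(l)}$ has derivatives of order at most $m-1$ bounded uniformly in $j$.
\item For $|\zeta|>1$, the identity $f(x)=h^{-1}(\zeta h(x))$ only makes sense for $x\in V\cap f^{-1}(V)$, unless you extend $h$ to $f(V)\supset V$ by $h:=\zeta\, h\circ g$. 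This looseness is already present in the statement and is not a flaw of your argument.
\end{itemize}
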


In particular, as a consequence of the result above, as long as $f$ is at least twice-differenciable, the conjugation map $h$ is also a diffeomorphism and it can be chosen in a unique way with the added restriction that $h'(u_{\star}) = 1$.

Finally, let us mention a particular result that is used in the text. Assume $f:(0,1) \to (0,1)$ is a $C^2$ diffeormophim with a unique fixed point $u_{\star} \in (0,1)$ such that $\zeta = f'(u_{\star})>1$. In this case, we claim that the conjugation given by the Hartman-Grobman Theorem above can be extended in a way such that $h:(0,1) \to \R$.

Indeed, assume that one initially defines a conjugation $\tilde{h}: (a,b) \to (a',b')$, with $u_{\star} \in (a,b)$ and $a' < 0 < b'$. Since $\zeta >1$, $u_{\star}$ is a repulsive fixed point, meaning that $f^{-1}(a,b) \subset (a,b)$. Furthermore, from~\eqref{eq:conjugation}, one gets, for $y \in (a,b)$ and $m \in \N$ such that $f^{m}(y) \in (a,b)$,
\begin{equation}\label{eq:iterated_conjugation}
h \circ f^{m} (y) = \zeta^{m} \tilde{h} (y).
\end{equation}

We now use the fact that the fixed point $u_{\star}$ is unique. This implies that $\cap_{n \geq 0} f^{n}(a,b) = (0,1)$. Given $x \in (0,1)$, there exists $n \in \N$ such that $f^{-n}(x) \in (a,b)$. We define
\begin{equation}
h(x):= \zeta^{n} \tilde{h} (f^{-n}(x)).
\end{equation}
Let us first prove that $h$ is well defined. Indeed, assume $x \in (0,1)$ and $m,n \in \N$ are such that $f^{-n}(x) \in (a,b)$ and $f^{-n-m}(x) \in (a,b)$. Writing $y = f^{-n-m}(x)$, we have $f^{m}(y) = f^{n}(x) \in (a,b)$. From~\eqref{eq:iterated_conjugation}, it follows that
\begin{equation}
\zeta^{n} \tilde{h} (f^{-n}(x)) = \zeta^{n} \tilde{h} (f^{m}(y)) = \zeta^{n+m} \tilde{h} (y) = \zeta^{n+m} \tilde{h} (f^{-n-m}(x)),
\end{equation}
which proves that the definition of $h(x)$ does not depend on the choice of $n \in \N$.

Since $\cap_{n \geq 0} f^{n}(a,b) = (0,1)$ and $\zeta>1$, $h$ extends the function $\tilde{h}$ to the whole interval $(0,1)$ and its image is the whole real line. It remains to prove that $h$ is indeed a conjugation of $f$ in the whole interval $(0,1)$. Fix then $x \in (0,1)$ and $n \in \N$ such that $f^{-n} (x) \in (a,b)$. Notice that
\begin{equation}
\zeta h(x) = \zeta^{n+1} \tilde{h} (f^{-n}(x)) = \zeta^{n+1} \tilde{h} (f^{-n-1}(f(x))) = h(f(x)),
\end{equation}
concluding the proof of the claim.

\bibliographystyle{plain}
\bibliography{all}

\end{document}